\newcommand{\ccirc}{\mathbin{\mathchoice
  {\xcirc\scriptstyle}
  {\xcirc\scriptstyle}
  {\xcirc\scriptscriptstyle}
  {\xcirc\scriptscriptstyle}
}}
\newcommand{\xcirc}[1]{\vcenter{\hbox{$#1\circ$}}}
\newcommand{\E}{\mathbb{E}}
\newcommand{\p}{\mathbb{P}}
\newcommand{\LL}{L_{\pi}}
\newcommand{\lx}{L_{\xi}}
\newcommand{\ft}{f_{R_{\delta}}}
\newcommand{\m}{\mu_{n, q}}
\newcommand{\n}{\mu_{n, q_n}}
\newcommand{\inv}{l}
\newcommand{\IV}{\text{Inv}}
\newcommand{\var}{\text{Var}}
\newcommand{\cov}{\text{Cov}}
\newcommand{\bs}{\boldsymbol}
\begin{document}

\title{The Limit of the Empirical Measure of the Product of Two Independent Mallows Permutations%\thanks{Grants or other notes
%about the article that should go on the front page should be
%placed here. General acknowledgments should be placed at the end of the article.}
}
%\subtitle{Do you have a subtitle?\\ If so, write it here}

\titlerunning{The Empirical Measure of the Product of Mallows Permutations}        % if too long for running head

\author{Ke Jin         %\and
        %Second Author %etc.
}

%\authorrunning{Short form of author list} % if too long for running head

\institute{Ke Jin \at
              Department of Mathematical Sciences, University of Delaware \\
              \email{kejin@udel.edu}           %  \\
%             \emph{Present address:} of F. Author  %  if needed
}

\date{Received: date / Accepted: date}
% The correct dates will be entered by the editor

\maketitle

\begin{abstract}
The Mallows measure is a probability measure on $S_n$ where the probability of a permutation $\pi$ is proportional to $q^{l(\pi)}$ with $q > 0$ being a parameter and $l(\pi)$ the number of inversions in $\pi$. We show the convergence of the random empirical measure of the product of two independent permutations drawn from the Mallows measure, when $q$ is a function of $n$ and $n(1-q)$ has limit in $\mathbb{R}$ as $n \to \infty$.
\keywords{Mallows measure \and random permutation \and convergence of measure}
% \PACS{PACS code1 \and PACS code2 \and more}
\subclass{60F05 \and 60B15 \and 05A05}
\end{abstract}

\section{Introduction}\label{S1}

\subsection{Background}
\begin{definition}
Given $\pi \in S_n$, the inversion set of $\pi$ is defined by
\[
\IV(\pi) \coloneqq \{ (i, j) : 1 \le i < j \le n \text{ and } \pi(i) > \pi(j) \},
\]
and the inversion number of $\pi$, denoted by $\inv(\pi)$, is defined to be the cardinality of $\IV(\pi)$.
\end{definition}
The Mallows measure on $S_n$ is introduced by Mallows in \cite{mallows1957}. For $q > 0$, the $(n, q)$ - Mallows measure on $S_n$ is given by
\[
\m(\pi) \coloneqq \frac{q^{\inv(\pi)}}{Z_{n, q}}, \quad \text{where }\  Z_{n, q} = \prod_{i = 1}^n \frac{1 - q^i}{1 - q}.
\]
Here $Z_{n, q}$ is the normalizing constant, which has an explicit form (see, e.g., \cite{Stanley} Corollary 1.3.13). In other words, under the Mallows measure with parameter $q>0$, the probability of a permutation $\pi$ is proportional to $q^{\inv(\pi)}$. 

Mallows measure has been used in modeling ranked and partially ranked data (see, e.g., \cite{CR}, \cite{FV}, \cite{Ma}). In \cite{CR}, Critchlow provides several examples where Mallows model gives a good fit to ranking data.
In Starr's paper \cite{Starr}, he proves the following result showing that the random empirical measure induced from a Mallows permutation converges in probability to a non-random probability measure with an explicit density.

%%%%%%%%%%%%%%%%%% Theorem 1 %%%%%%%%%%%%%%%%%%%%%%%%%%%%%%%%%%%%%%

\begin{theorem}[S.Starr]\label{T1}
Suppose that $(q_n)_{n=1}^{\infty}$ is a sequence such that the limit $\beta = \lim_{n\to\infty} n (1-q_n)$ exists. For any $\epsilon >0$ and any continuous function $f: [0, 1]\times[0,1] \to \mathbb{R}$,
\[
\lim_{n\to \infty} \mu_{n, q_n}
\bigg(\bigg|\frac{1}{n}\sum_{i=1}^{n}f\Big(\frac{i}{n}, \frac{\pi(i)}{n}\Big) -
 \int_{[0,1]\times[0,1]} f(x,y)u(x,y,\beta)\ dxdy\bigg|>\epsilon \bigg) = 0,
\]
where
\begin{equation}\label{eq:t1}
u(x, y, \beta) = \frac{(\beta/2) \sinh(\beta/2)}{\left(e^{\beta/4} \cosh(\beta[x-y]/2)-e^{-\beta/4}\cosh(\beta[x+y-1]/2)\right)^2},
\end{equation}
if $\beta \neq 0$, and $u(x, y, 0) \coloneqq 1$.
\end{theorem}
The author proves the theorem above by making use of the mean field theory and evaluates the density of the limiting distribution as the solution to an integrable PDE. We do not think this approach applies in our case, since the Hamiltonian is not of mean-field type. In this paper, we establish a similar result for the empirical measure of the product of two independent Mallows permutations. Here the product of two permutations is taken within the symmetric group $S_n$, and our proof takes an entirely different approach.

\subsection{Results}

Our first result is about the distribution of $\frac{\pi(i)}{n}$ in the regime of the Mallows measure where $\lim_{n \to \infty}n(1-q_n)$ exists. It says that the distribution of 
$\frac{\pi(i)}{n}$ approaches towards the measure with density $u\left(\frac{i}{n}, y, \beta \right)$ uniformly on $i \in [n]$. Here the `approach' is in the sense that, given any continuous function $f$ on $[0, 1]$, the expectation of $f$ with respect to the empirical measure of $\frac{\pi(i)}{n}$ converges uniformly to the expectation of $f$ with respect to the probability with density $u\left(\frac{i}{n}, y, \beta \right)$. Moreover, the covariance of $f(\frac{\pi(i)}{n})$ and $f(\frac{\pi(j)}{n})$ converges to 0 uniformly on all pairs $(i, j)$ where $i \neq j$.

\begin{theorem}\label{M0}
Suppose that $\{q_n\}_{n=1}^{\infty}$ is a sequence such that $\lim\limits_{n\to\infty} n (1-q_n) = \beta \in \mathbb{R}$. For any continuous function $f : [0, 1] \longrightarrow \mathbb{R}$, we have
\begin{equation}\label{eq:M01}
\textstyle\lim\limits_{n \to \infty}\max\limits_{i \in [n]}
  \left| \n \left( f\left(\frac{\pi(i)}{n}\right) \right) - \int_0^1 f(y)\cdot u\left(\frac{i}{n}, y, \beta \right)\, dy \right| = 0,
\end{equation}
and
\begin{equation}\label{eq:M02}
\textstyle\lim\limits_{n \to \infty}\max\limits_{\substack{ i \neq j\\
                                   i, j \in [n]}}
    \left| \cov_n\Big(f\big(\frac{\pi(i)}{n}\big), f\big(\frac{\pi(j)}{n}\big)\Big) \right| = 0.
\end{equation}
Here $u(x, y, \beta)$ is defined in (\ref{eq:t1}), and
\begin{align*}
&\textstyle\cov_n\Big(f\big(\frac{\pi(i)}{n}\big), f\big(\frac{\pi(j)}{n}\big)\Big) \coloneqq\\
&\textstyle\qquad\qquad \n \left(f\big(\frac{\pi(i)}{n}\big)f\big(\frac{\pi(j)}{n}\big)\right) -
     \n \left(f\big(\frac{\pi(i)}{n}\big)\right) \n \left(f\big(\frac{\pi(j)}{n}\big)\right).
\end{align*}
\end{theorem}

Theorem \ref{M0} is a major step in proving Theorem \ref{M2}, which shows the convergence of the empirical measure defined by the product of two independent Mallows distributed permutations. 

\begin{theorem}\label{M2}
Suppose that $\{q_n\}_{n=1}^{\infty}$ and $\{q'_n\}_{n=1}^{\infty}$ are two sequences such that $\lim_{n\to\infty} n (1-q_n) = \beta$  and
$\lim_{n\to\infty} n (1-q'_n) = \gamma$, with $\beta$, $\gamma \in \mathbb{R}$. Let $\p_n$ denote the probability measure on $S_n \times S_n$
such that $\p_n\big((\pi, \tau)\big) = \n(\pi) \cdot \mu_{n, q'_n}(\tau)$, i.~e.~$\p_n$ is the product measure of $\n$ and $\mu_{n, q'_n}$.
Let $\tau \ccirc \pi$ denote the product of $\tau$ and $\pi$ in $S_n$ with $\tau \ccirc \pi(i) = \tau(\pi(i))$.
Then, for any $\epsilon > 0$,
\[
\lim_{n \to \infty}\p_n\left(\left|\, \frac{1}{n}\sum_{i=1}^{n}f\left(\frac{i}{n}, \frac{\tau \ccirc \pi(i)}{n}\right) -
  \int_0^1\int_0^1 f(x, y) \rho(x, y)\,dxdy\, \right| > \epsilon \right) = 0
\]
for every continuous function $f: [0,1]\times[0,1] \rightarrow \mathbb{R}$, with
\begin{equation}\label{eq:m2a}
\rho(x, y) \coloneqq \int_{0}^{1} u(x, t, \beta)\cdot u(t, y, \gamma)\,dt,
\end{equation}
where $u(x, y, \beta)$ is defined in (\ref{eq:t1}).
\end{theorem}

Theorem \ref{M2} is used in our proof (see \cite{Ke}) of a weak law of large numbers for the length of the longest common subsequence of two independent Mallows permutations. Theorem \ref{M2} is also of interest in its own right because it provides us a nontrivial example for a more general question: if we have two independent sequences of random permutations $\{\pi_n\}$ and $\{\tau_n\}$ whose limiting empirical measures are known, under what condition does the limiting empirical measure of $\{\tau_n\ccirc\pi_n\}$ exist with density of a similar form as $\rho$ in (\ref{eq:m2a})?

The paper is organized as follows. In Section 2, we introduce and prove Lemma \ref{L18} and Lemma \ref{L13}. In Section 3 and Section 4, we show Theorem \ref{M0} and Theorem \ref{M2} respectively using those two lemmas established in Section 2. We conclude the paper by discussing some future works.

\section{Two Key Lemmas}
In this section we introduce the following two lemmas which play the key role in proving the main theorems. The proofs presented in this section are largely independent of the following sections. With these two lemmas in mind, readers can go through the proofs of the main theorems without trouble.

\begin{lemma}\label{L18}
Suppose $A = [y_1, y_2] \subset [0,1]$. For any $\beta \in \mathbb{R}$ and any sequence $\{q_n \}$ such that $q_n > 0$ and $\lim_{n \to \infty} n (1 - q_n) = \beta$,
\[
\textstyle\lim\limits_{n \to \infty}\max\limits_{i \in [n]}
  \left| \n \left(\mathds{1}_A\left(\frac{\pi(i)}{n}\right) \right) - \int_{y_1}^{y_2} u\left(\frac{i}{n}, y, \beta \right)\, dy \right| = 0.
\]
\end{lemma}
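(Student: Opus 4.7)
The strategy is to deduce Lemma \ref{L18} from Theorem \ref{M1} by a subsequence (compactness) argument. Since $\n(\mathds{1}_A(\pi(i)/n)) = \n(\pi(i)/n \in [y_1, y_2])$ and the target $\int_{y_1}^{y_2} u(i/n, y, \beta)\, dy$ depends continuously on the parameter $i/n$, while Theorem \ref{M1} already yields the pointwise limit for any sequence with $a_n/n \to a$, the task is essentially to upgrade ``pointwise in $a$'' to ``uniform in $i$,'' which is exactly what $\max_{i \in [n]}$ demands.

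Suppose, for contradiction, that the claim fails. Then there exist $\epsilon > 0$, an increasing subsequence $n_k \to \infty$, and indices $i_k \in [n_k]$ with
\[
\left| \mu_{n_k, q_{n_k}}\bigl(\pi(i_k)/n_k \in [y_1, y_2]\bigr) - \int_{y_1}^{y_2} u(i_k/n_k, y, \beta)\, dy \right| > \epsilon
\]
for every $k$. Since $\{i_k/n_k\} \subset [0,1]$ is bounded, pass to a further subsequence so that $i_k/n_k \to a$ for some $a \in [0,1]$. Extend $\{i_k\}$ to a full sequence $\{a_n\}_{n=1}^{\infty}$ with $a_n \in [n]$ by setting $a_{n_k} = i_k$ and, for $m \notin \{n_k\}$, $a_m = \max(1, \min(m, \lfloor a m \rfloor))$; then $a_m/m \to a$ holds for the whole sequence.

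Now Theorem \ref{M1} applied to $\{a_n\}$ gives $\mu_{n, q_n}(\pi(a_n)/n \in (\cdot)) \overset{d}{\longrightarrow} v$, where $v$ has the continuous density $u(a, \cdot, \beta)$ and hence no atoms. Weak convergence therefore implies $\mu_n(B) \to v(B)$ for every Borel set with $v$-null boundary; taking $B = [y_1, y_2]$ and restricting to $n = n_k$ gives
\[
\mu_{n_k, q_{n_k}}\bigl(\pi(i_k)/n_k \in [y_1, y_2]\bigr) \longrightarrow \int_{y_1}^{y_2} u(a, y, \beta)\, dy.
\]
On the other hand, the explicit formula (\ref{eq:M1}) shows $u(x, y, \beta)$ is continuous in $x$ and uniformly bounded on $[0,1] \times [y_1, y_2]$, so dominated convergence yields $\int_{y_1}^{y_2} u(i_k/n_k, y, \beta)\, dy \to \int_{y_1}^{y_2} u(a, y, \beta)\, dy$. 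The two convergences contradict the $\epsilon$-separation, completing the proof.

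The main obstacle is really just packaging: all the substantive probabilistic content sits in Theorem \ref{M1}, and continuity of $u$ in $x$ absorbs the tiny mismatch between $i_k/n_k$ and $a$. The only delicate point is ensuring the extended sequence $\{a_n\}$ stays within $[n]$ while preserving $a_n/n \to a$ when $a \in \{0, 1\}$, which the clipping above handles cleanly.
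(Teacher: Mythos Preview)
Your proof is correct and takes a genuinely different route from the paper's. The paper fixes a mesh size $m$, defines grid sequences $a_n^{(k)} \approx kn/m$, and splits the error into three pieces: (i) the gap between $\n(\mathds{1}_A(\pi(i)/n))$ and $\n(\mathds{1}_A(\pi(a_n^{(k)})/n))$, bounded via Lemma~\ref{L16} (a consequence of the Mallows-specific ratio bound of Lemma~\ref{L2}/\ref{L14}); (ii) the gap between $\int u(i/n,\cdot)$ and $\int u(k/m,\cdot)$, handled by uniform continuity of $u$; and (iii) the finitely many grid points, handled by Theorem~\ref{M1}. Your compactness/subsequence argument bypasses Lemma~\ref{L16} entirely: once Theorem~\ref{M1} is available and $u$ is continuous in its first variable, uniformity in $i$ follows from the standard ``suppose not, extract a convergent subsequence, contradict pointwise convergence'' mechanism. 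This is shorter and more conceptual. The paper's approach, however, is not wasted effort: the same discretization template, together with the two-index analogue Lemma~\ref{L17}, is exactly what drives the proof of Lemma~\ref{L13}, where a pure subsequence argument runs into trouble when $i$ and $j$ collapse to the same limit point. So the paper trades elegance here for a uniform method that also covers the covariance lemma.
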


%%%%%% for different interval A and B, the following lemma also holds%%%%%%%%%%%%%%%

\begin{lemma}\label{L13}
Suppose $A = [y_1, y_2] \subset [0, 1]$ and $B = [y_3, y_4] \subset [0, 1]$. Given $\beta \in \mathbb{R}$ and any sequence $\{q_n \}$ such that $q_n > 0$ and $\lim_{n \to \infty} n (1 - q_n) = \beta$, define
\begin{align*}
&\textstyle\cov_n\Big(\mathds{1}_A\big(\frac{\pi(i)}{n}\big), \mathds{1}_B\big(\frac{\pi(j)}{n}\big)\Big) \coloneqq\\
&\textstyle\qquad\qquad \n \left(\mathds{1}_A\big(\frac{\pi(i)}{n}\big)\mathds{1}_B\big(\frac{\pi(j)}{n}\big)\right) -
     \n \left(\mathds{1}_A\big(\frac{\pi(i)}{n}\big)\right) \n \left(\mathds{1}_B\big(\frac{\pi(j)}{n}\big)\right).
\end{align*}
Then, we have
\[
\textstyle\lim\limits_{n \to \infty}\max\limits_{\substack{ i \neq j\\
                                   i, j \in [n]}}
    \left| \cov_n\Big(\mathds{1}_A\big(\frac{\pi(i)}{n}\big), \mathds{1}_B\big(\frac{\pi(j)}{n}\big)\Big) \right| = 0.
\]
\end{lemma}

Lemma \ref{L18} states that in the regime of Mallows permutation where $n(1-q_n)$ has limit in $\mathbb{R}$, the probability of $\frac{\pi(i)}{n}$ falling in an arbitrary interval converges to a constant uniformly for $i \in [n]$. Lemma \ref{L13} states that the covariance of $\mathds{1}_A\big(\frac{\pi(i)}{n}\big)$ and $\mathds{1}_B\big(\frac{\pi(j)}{n}\big)$ converges to 0 uniformly on all those pairs such that $i \neq j$. 

The proofs of these two lemmas involves some computation which utilize Theorem \ref{T1} and properties of Mallows permutation. It may be the case that more general tools could be used to establish the uniform convergence of the distribution of $\frac{\pi(i)}{n}$ as well as 
$\cov_n(\frac{\pi(i)}{n}, \frac{\pi(j)}{n})$.

\subsection{Preliminaries}\label{S2}

Let $\mu$ be a probability measure on the Borel $\sigma$-field $\mathcal{B}_{\Sigma}$. We use the convention that $\mu(f) = \int_{\Sigma} f \ d\mu$, for any measurable function $f$.
For any $\pi \in S_n$, let $L_{\pi}$ denote the empirical measure induced by $\pi$, that is,
\begin{equation}\label{eq:em}
L_{\pi}(R) \coloneqq \frac{1}{n}\sum_{i = 1}^{n} \mathds{1}_R\left( \frac{i}{n}, \frac{\pi(i)}{n} \right),
\end{equation}
for any $R \in \mathcal{B}_{[0, 1]\times[0,1]}$. Here $\mathds{1}_R(x, y)$ denotes the indicator function of $R$. Hence, for any measurable function $f$,
\[
L_{\pi}(f) = \frac{1}{n}\sum_{i=1}^{n}f\left(\frac{i}{n}, \frac{\pi(i)}{n}\right).
\]

For  any $\pi \in S_n$,  Let $\bs{z}(\pi) \coloneqq \{(\frac{i}{n}, \frac{\pi(i)}{n})\}_{i \in [n]}$ denote the set of $n$ points in $[0,1] \times [0,1]$ defined by $\pi$. Conversely, for any $n$ points $V \coloneqq\{(x_i, y_i)\}_{i \in [n]}$ such that $i \neq j$ implies $x_i \neq x_j$ and $y_i \neq y_j$, we can define a permutation $\pi \in S_n$ as follows. Without loss of generality, assuming $x_1 < \cdots < x_n$, define
\[
\pi(i) \coloneqq |\{j \in [n] : y_j \le y_i\}|.
\]
We will use $\Phi(V)$ to denote the permutation induced by $V$ as above. Similarly, we define the number of inversions of a collection points as follows,
\[
\inv(V) \coloneqq |\{(i, j) : (x_i - x_j)(y_i - y_j) < 0 \text{ and }  i < j \}|.
\]
Note that the definition of the number of inversions of a collection of points is consistent with the definition of inversion of permutation in the sense that, for any $\pi \in S_n$,
\[
\inv(\pi) = \inv(\bs{z}(\pi)) \quad \text{ and }\quad \inv(V) = \inv\left(\Phi(V)\right).
\]
\begin{definition}
For any $\pi \in S_n$ and $i \in [n]$, define 
\[
\textstyle\pi^{(i)} \coloneqq \Phi\left(\Big\{\left(\frac{j}{n}, \frac{\pi(j)}{n}\right): j \neq i\Big\}\right) \quad \text{and} \quad Q(\pi, i) \coloneqq \{ \tau \in S_n: \tau^{(i)} = \pi^{(i)} \}.
\]
In other words, $\pi^{(i)}$ denotes the permutation in $S_{n-1}$ which is induced from $\pi$ at those indices other than $i$, and $Q(\pi, i)$ contains those permutations in $S_n$ each of which has the same relative ordering as $\pi$ at those indices other than $i$.
\end{definition}
The definition above is best understood when we represent a permutation by a grid of tiles. Specifically, for any $\pi \in S_n$, define an $n\times n$ grid of tiles such that the tile at $j$-th row and $i$-th column is black if only if $\pi(i) = j$. Here we index the row number from bottom to top, i.e.\,the bottom row is indexed as the first row. For example, the grid representations of $\pi = (4,1,7,3,6,2,5)$ and $\pi^{(4)} = (3,1,6,5,2,4)$ are shown in the following figures.
\begin{figure}[ht]
\hfill
\begin{minipage}[b]{0.4\textwidth} \centering
\includegraphics[width = 3.5cm]{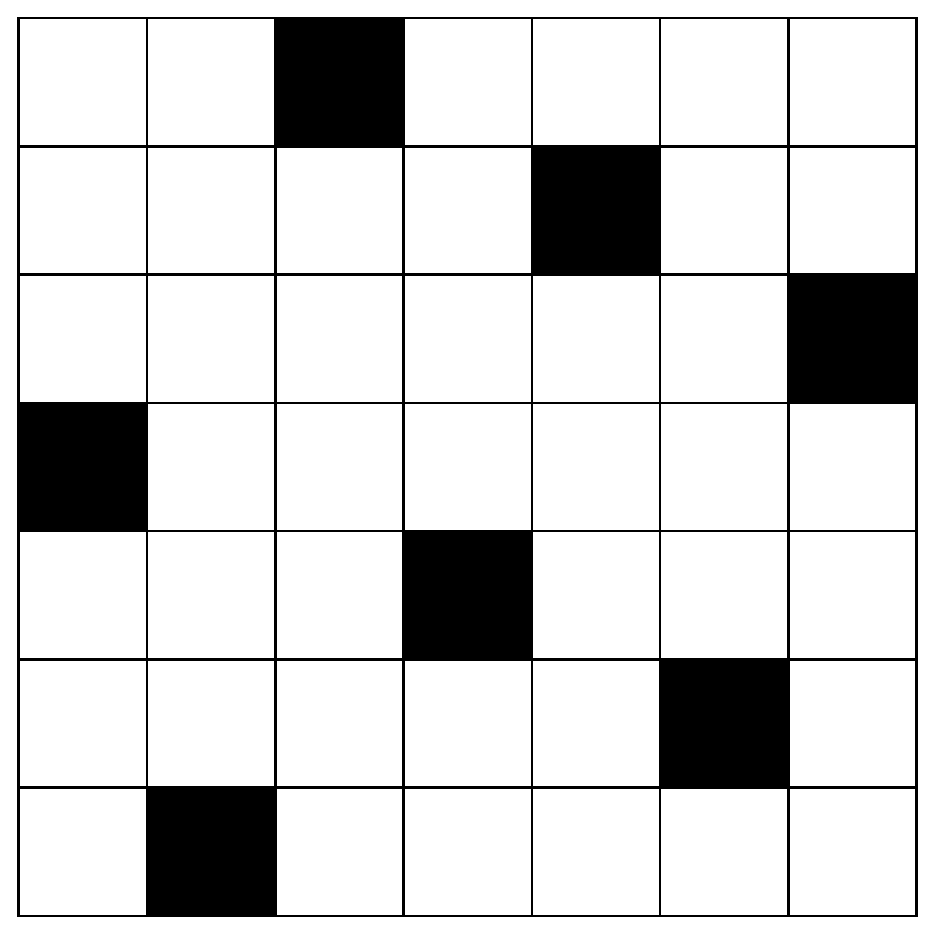}
\caption*{\small$\pi = (4,1,7,3,6,2,5)$}
\end{minipage}
\hfill
\begin{minipage}[b]{0.4\textwidth} \centering
\includegraphics[width = 3cm]{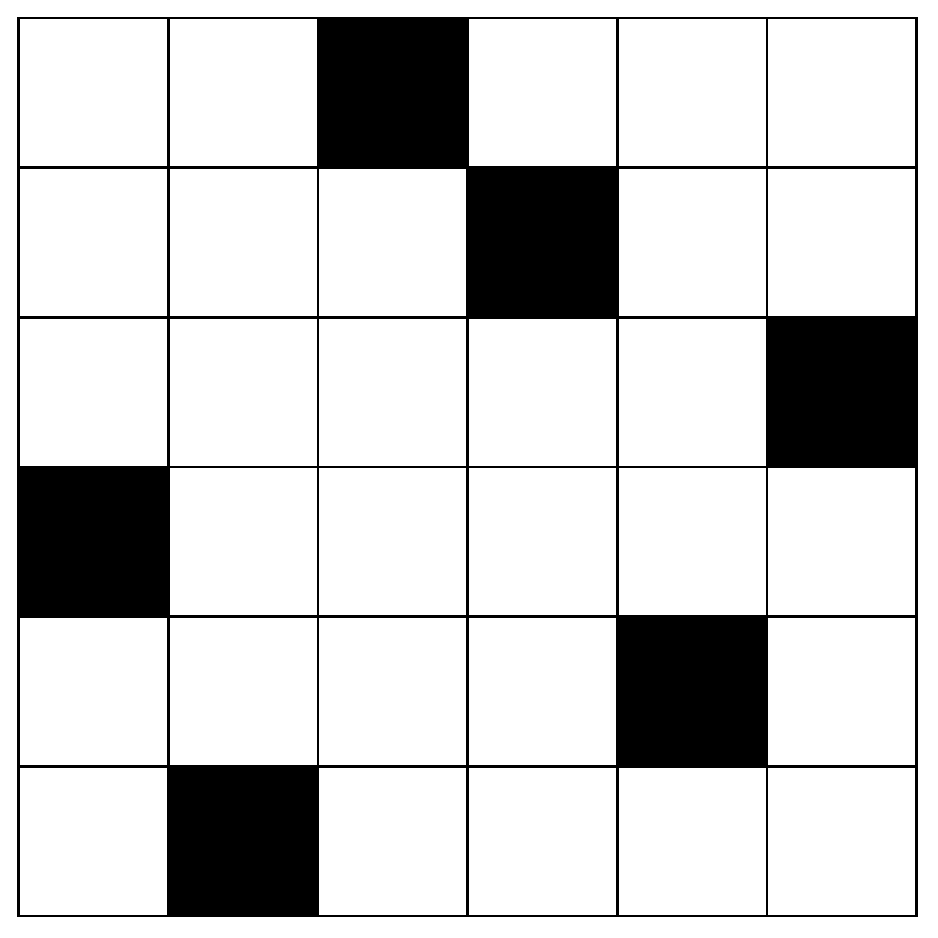}
\caption*{\small$\pi^{(4)} = (3,1,6,5,2,4)$}
\end{minipage}
\hfill
\end{figure}

Note that the grid representation of $\pi^{(i)}$ can be easily obtained by deleting the $i$-th column and $\pi(i)$-th row from the grid of $\pi$. Also, the grid representations of those permutations other than $\pi$ in $Q(\pi, i)$ can be obtained by removing and reinserting the $\pi(i)$-th row into the grid of $\pi$. For example, it can be easily verified that $\tau = (3,1,7,6,5,2,4) \in Q(\pi, 4)$. The grid representation of $\tau$ can be obtained by removing the third row from the grid of $\pi$ and reinserting it between the sixth row and seventh row of the grid of $\pi$ (see Figure \ref{fg:2}).
\begin{figure}[h]
\hfill
\begin{minipage}[b]{0.4\textwidth} \centering
\includegraphics[height = 3.5cm]{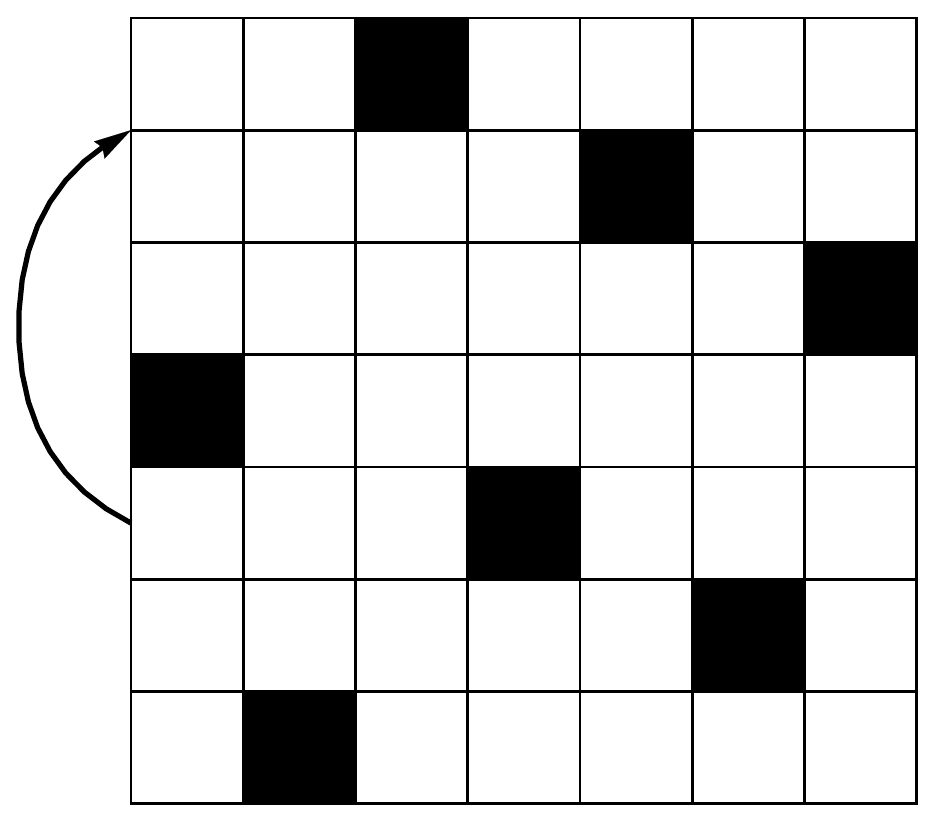}
\caption*{\small$\pi = (4,1,7,3,6,2,5)$}
\end{minipage}
\begin{minipage}[t]{0.1\textwidth} \centering
\vspace{-2.3cm}
$\Longrightarrow$
\vspace{2cm}
\caption{}\label{fg:2}
\end{minipage}
\begin{minipage}[b]{0.4\textwidth} \centering
\includegraphics[height = 3.5cm]{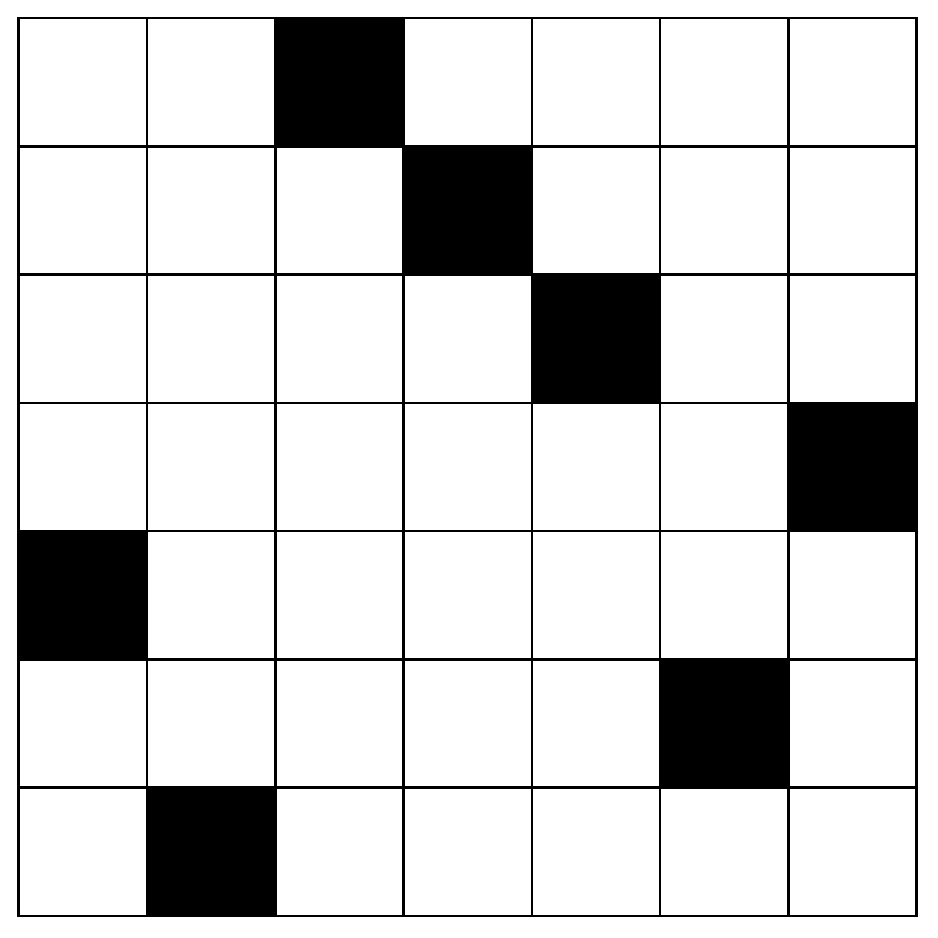}
\caption*{\small$\tau = (3,1,7,6,5,2,4)$}
\end{minipage}
\hfill
\end{figure}

From this definition, it can be seen that $|Q(\pi, i)| = n$ for any $\pi \in S_n$. Also, for any $\pi, \tau \in S_n$, we have either
$Q(\pi, i) = Q(\tau, i)$ or $Q(\pi, i) \cap Q(\tau, i) = \varnothing$.

\begin{proposition}\label{P1}
For any $\pi, \tau\in Q(\pi, i)$, with $\pi(i) = j < k = \tau(i)$, it holds that
\begin{align*}
\inv(\tau) - \inv(\pi) &= |\{ t > i : j+1 \le \pi(t) \le k \}| - |\{ t < i : j+1 \le \pi(t) \le k \}|\\
                       &= |\{ t > i : j \le \tau(t) \le k-1 \}| - |\{ t < i : j \le \tau(t) \le k-1 \}|.
\end{align*}
\end{proposition}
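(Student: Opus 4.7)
The plan is to reduce the computation of $\inv(\tau) - \inv(\pi)$ to a count that only involves the index $i$. Since $\tau \in Q(\pi, i)$ means $\tau^{(i)} = \pi^{(i)}$, the relative order of $\pi(s), \pi(t)$ agrees with that of $\tau(s), \tau(t)$ for every pair $s, t \ne i$. Consequently such pairs contribute equally to $\inv(\pi)$ and $\inv(\tau)$, and all of the discrepancy comes from inversions that involve the distinguished index $i$. So the first step I would carry out is to write $\inv(\tau) - \inv(\pi) = N_\tau - N_\pi$, where $N_\sigma$ counts inversions of $\sigma$ containing $i$; explicitly, $N_\pi = |\{s < i : \pi(s) > j\}| + |\{t > i : \pi(t) < j\}|$ and similarly for $\tau$ with $j$ replaced by $k$.

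Next I would establish the precise relationship between $\pi(s)$ and $\tau(s)$ for $s \ne i$. By the definition of $\Phi$ via ranks, $\tau(s)$ is the image of $\pi(s)$ under the unique order-isomorphism $[n]\setminus\{j\} \to [n]\setminus\{k\}$. This isomorphism fixes values outside $(j, k]$ and sends each $v$ with $j < v \le k$ to $v - 1$, giving $\tau(s) = \pi(s)$ when $\pi(s) < j$ or $\pi(s) > k$, and $\tau(s) = \pi(s) - 1$ when $j < \pi(s) \le k$. Plugging this into the expressions for $N_\pi$ and $N_\tau$ and simplifying via $|\{s < i : \pi(s) > j\}| - |\{s < i : \pi(s) > k\}| = |\{s < i : j+1 \le \pi(s) \le k\}|$ (and the analogous statement on the $t > i$ side) yields the first claimed identity.

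The second identity is then immediate from the same rank correspondence: for any $s \ne i$, the condition $j+1 \le \pi(s) \le k$ is equivalent to $j \le \tau(s) \le k-1$, since the order-isomorphism bijects $\{j+1, \ldots, k\}$ with $\{j, \ldots, k-1\}$. I do not expect any genuinely hard step here; the main place care is needed is the bookkeeping of strict versus non-strict inequalities when translating between the value set of $\pi$ (missing $j$) and that of $\tau$ (missing $k$), and in verifying that the cancellations between $N_\pi$ and $N_\tau$ on the $s < i$ and $t > i$ sides produce exactly the signed count that appears in the statement.
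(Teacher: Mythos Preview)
Your proposal is correct and follows essentially the same idea as the paper's proof: both arguments reduce $\inv(\tau)-\inv(\pi)$ to the change in inversions that involve the distinguished index $i$, then count how many values in the range between $j$ and $k$ lie to the left versus to the right of column $i$. The paper presents this pictorially via the grid representation (the two rectangles $A$ and $B$), whereas you carry out the same computation algebraically using the explicit order-isomorphism $[n]\setminus\{j\}\to[n]\setminus\{k\}$; the content is the same.
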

\begin{proof}
This result can be easily seen from the grid representations of $\pi$ and $\tau$. Note that an inversion in a permutation corresponds to a pair of black tiles such that one tile is located to the southeast of the other. Hence, by the discussion above, we only need to count the change of the number of those pairs when we reinsert the $j$-th row of $\pi$'s grid to get the grid form of $\tau$. Specifically, we only need to consider those pairs which contain the black tile on the $i$-th column.
\begin{figure}[h]
\hfill
\begin{minipage}[b]{0.4\textwidth} \centering
\includegraphics[height = 3.5cm]{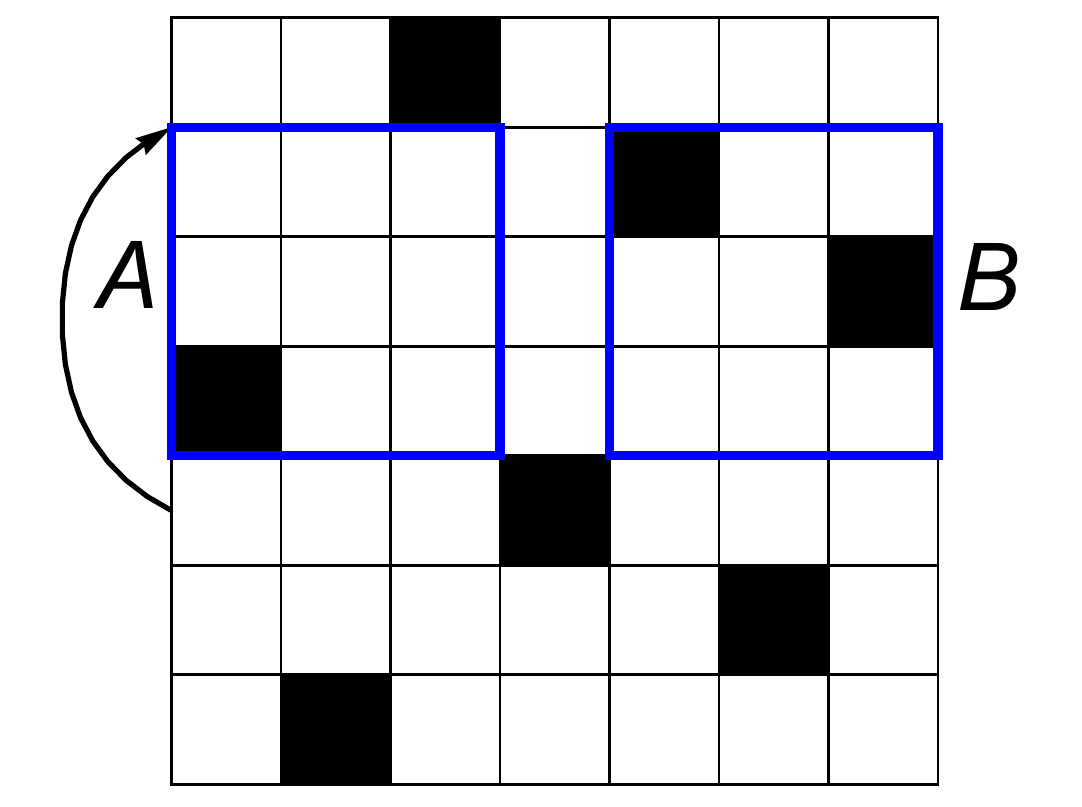}
\caption*{\small$\pi = (4,1,7,3,6,2,5)$}
\end{minipage}
\begin{minipage}[t]{0.1\textwidth} \centering
\vspace{-2.3cm}
$\Longrightarrow$
\vspace{2cm}
\caption{}\label{fg:3}
\end{minipage}
\begin{minipage}[b]{0.4\textwidth} \centering
\includegraphics[height = 3.5cm]{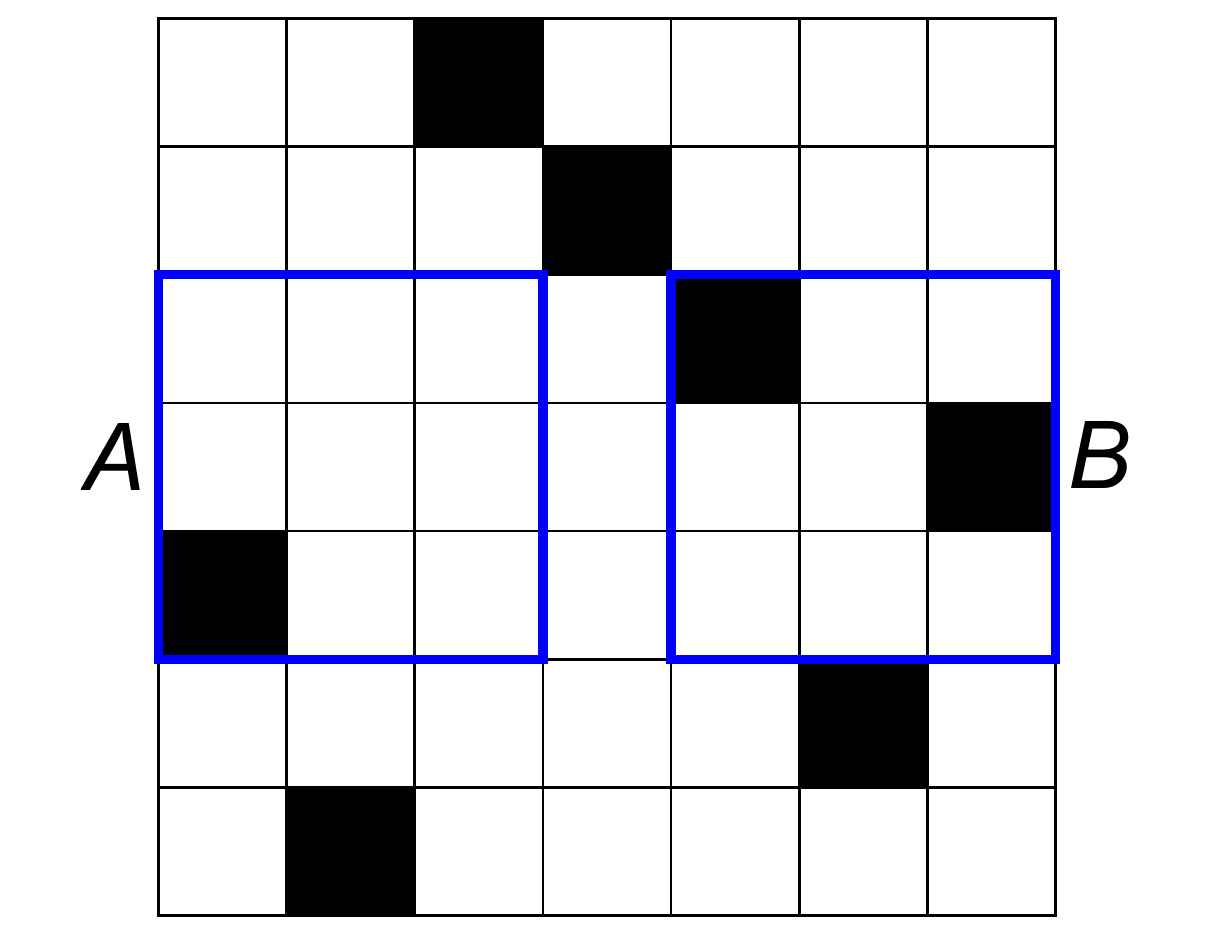}
\caption*{\small$\tau = (3,1,7,6,5,2,4)$}
\end{minipage}
\hfill
\end{figure}

Taking the same example above, $\inv(\tau) - \inv(\pi)$ is equal to the difference of the number of black tiles within the rectangles $A$ and $B$(see Figure \ref{fg:3}). This is because, each of those black tiles in rectangle $A$ forms an inversion with the black tile in the fourth column in the grid representation of $\pi$ but not in that of $\tau$, whereas the opposite holds for those black tiles in the rectangle $B$.
\end{proof}

\begin{definition}
For any $\pi \in S_n$, let $\pi^r \in S_n$ denote the reversal of $\pi$ which is defined by $\pi^r(i) \coloneqq \pi(n+1-i)$ for any $i \in [n]$. Let $\pi^{-1}$ denote the inverse of $\pi$ in the symmetric group $S_n$.
\end{definition}

One property of Mallows permutation is the following proposition (cf. Lemma 2.2 in \cite{Naya}).
\begin{proposition}\label{P3}
For any $n \ge 1$ and $q > 0$, if $\pi \sim \m$ then $\pi^r \sim \mu_{n, 1/q}$ and $\pi^{-1} \sim \m$.
\end{proposition}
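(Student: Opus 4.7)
The plan is to reduce both claims to simple identities about how the inversion number transforms under reversal and inversion of permutations, and then substitute into the definition of the Mallows measure.

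First I will show that $\inv(\pi^r) = \binom{n}{2} - \inv(\pi)$. Given a pair $1 \le i < j \le n$, set $i' = n+1-j$ and $j' = n+1-i$ so that $1 \le i' < j' \le n$ and $(i,j) \mapsto (i',j')$ is a bijection on the set of pairs. The pair $(i,j)$ is an inversion of $\pi^r$, i.e.~$\pi^r(i) > \pi^r(j)$, exactly when $\pi(j') > \pi(i')$; equivalently, when $(i',j')$ is a \emph{non}-inversion of $\pi$. Hence the number of inversions of $\pi^r$ equals the number of non-inversions of $\pi$, which is $\binom{n}{2} - \inv(\pi)$. Substituting into the Mallows density,
\[
\p(\pi^r = \sigma) \;=\; \p(\pi = \sigma^r) \;=\; \frac{q^{\inv(\sigma^r)}}{Z_{n,q}} \;=\; \frac{q^{\binom{n}{2}}}{Z_{n,q}}\cdot (1/q)^{\inv(\sigma)},
\]
which is proportional to $(1/q)^{\inv(\sigma)}$, so $\pi^r \sim \mu_{n,1/q}$.

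Next I will show that $\inv(\pi^{-1}) = \inv(\pi)$ by exhibiting a bijection between the two inversion sets. If $(i,j)$ is an inversion of $\pi^{-1}$ with $i<j$ and $\pi^{-1}(i) > \pi^{-1}(j)$, let $a = \pi^{-1}(j)$ and $b = \pi^{-1}(i)$, so that $a < b$ and $\pi(a) = j > i = \pi(b)$; thus $(a,b) \in \IV(\pi)$. This map is clearly an involution up to swapping coordinates, hence a bijection $\IV(\pi^{-1}) \leftrightarrow \IV(\pi)$. Therefore $\inv(\pi^{-1}) = \inv(\pi)$, and
\[
\p(\pi^{-1} = \sigma) \;=\; \p(\pi = \sigma^{-1}) \;=\; \frac{q^{\inv(\sigma^{-1})}}{Z_{n,q}} \;=\; \frac{q^{\inv(\sigma)}}{Z_{n,q}},
\]
so $\pi^{-1} \sim \mu_{n,q}$.

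There is no real obstacle here; the whole proposition is a direct consequence of the two combinatorial identities above, and these are easy bijective counts. The only thing to be careful about is the change of index in the reversal bijection, to make sure that an inversion of $\pi^r$ corresponds to a non-inversion (rather than an inversion) of $\pi$ and that the correspondence is truly bijective on ordered pairs $i<j$.
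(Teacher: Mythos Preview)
Your argument is correct. Both combinatorial identities $\inv(\pi^r)=\binom{n}{2}-\inv(\pi)$ and $\inv(\pi^{-1})=\inv(\pi)$ are established by clean bijections, and the distributional conclusions follow immediately from the definition of $\mu_{n,q}$; one small point worth making explicit is that reversal is an involution, so $\pi^r=\sigma$ iff $\pi=\sigma^r$, which you implicitly use.

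The paper does not actually give its own proof of this proposition; it simply cites \cite{Naya}, Lemma~2.2. Your proof is the standard direct argument one would expect behind that citation, so there is no substantive methodological difference to discuss.
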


\subsection{One dimension analog of Theorem \ref{T1}}

The following lemma is the one dimensional analog of Theorem \ref{T1}. It says that, in the regime of Mallows measure where $\lim_{n \to \infty}n(1-q_n)$ exists, the distribution of $\frac{\pi(a_n)}{n}$ converges in distribution to a probability measure with explicit density, where $\{a_n\}$ is a sequence of indices such that $\lim_{n \to \infty}\frac{a_n}{n}$ exists.

\begin{lemma}\label{M1}
Suppose that $\{q_n\}_{n=1}^{\infty}$ is a sequence such that $\lim\limits_{n\to\infty} n (1-q_n) = \beta \in \mathbb{R}$, and $\{a_n\}$ is a sequence such that $\lim\limits_{n \to \infty} \frac{a_n}{n} = a$, where $a \in [0, 1]$ and $a_n \in [n]$. Then,
\[
\n \left( \frac{\pi(a_n)}{n} \in (\cdot) \right) \overset{d}{\longrightarrow} v.
\]
Here $\overset{d}{\longrightarrow}$ denotes convergence in distribution and $v$ is the probability measure on $[0,1]$ with density $f(y) = u(a, y, \beta)$ where $u(x, y, \beta)$ is as defined in (\ref{eq:t1}).
\end{lemma}

We will sometimes omit the third argument and simply use $u(x, y)$ to denote $u(x, y, \beta)$, if no confusion arises from the context. We use the symbol $u_{\beta}$ or $u$ to denote the measure on $[0, 1]\times[0,1]$ which has density $u(x, y, \beta)$ with respect to the Lebesgue measure $\lambda$.

To prove Lemma \ref{M1}, we show that any convergent subsequence of the empirical measures $\{\frac{\pi(a_n)}{n} \}$ has limiting density $u(a, y, \beta)$ and the theorem follows from the standard result of convergence of measures (Theorem \ref{thm:AshDade}). It is unknown to us whether Lemma \ref{M1} can be obtained directly from Theorem \ref{T1}. In the remainder of this section, we prove a sequence of technical lemmas to show Lemma \ref{M1}. The following lemma says that the result of Theorem \ref{T1} also holds when $f$ is an indicator function of any rectangle.

%%%%%%%%%% Lemma 1 %%%%%%%%%%%%%%%%%%%%%%%%%%%%%%%%%%%%%%%%%%%%%%%%%%%%%%%%%%%%%%%

\begin{lemma}\label{L1}
Under the same conditions as in Theorem \ref{T1}, for any $\epsilon >0$,
\[
\lim_{n\to \infty} \mu_{n, q_n}
\bigg(\bigg|\frac{1}{n}\sum_{i=1}^{n}\mathds{1}_R\Big(\frac{i}{n}, \frac{\pi(i)}{n}\Big) -
 \int_{R} u(x,y)\ dxdy\bigg|>\epsilon \bigg) = 0,
\]
for any $R = [x_1, x_2]\times[y_1, y_2] \subset [0, 1]\times[0, 1]$.
\end{lemma}
\begin{proof}
First we show that for any $R = [x_1, x_2]\times[y_1, y_2]$ and any $\epsilon >0$, when $n$ is sufficiently large,
\begin{equation}
\LL(R) < \min(x_2-x_1, y_2-y_1) + \frac{\epsilon}{24},  \label{eq:L1a}
\end{equation}
for any $\pi \in S_n$. Let $s \coloneqq\min(x_2-x_1, y_2-y_1)$. For any $\pi \in S_n$, we have
\[
\Big|\Big\{i : \Big(\frac{i}{n}, \frac{\pi(i)}{n}\Big) \in R \Big\}\Big| \le n s +1,
\]
since, of the points in $\big\{\big(\frac{i}{n}, \frac{\pi(i)}{n}\big)\big\}$, there is one and only one point on each line $x = \frac{i}{n}$ or $y = \frac{j}{n}$. Hence, $\LL(R) \le s+ \frac{1}{n}$ for any $\pi \in S_n$. We can choose $n$ large enough such that $\frac{1}{n} < \frac{\epsilon}{24}$.\\ 
Next, given $\delta >0$, let $R_{\delta} \coloneqq (x_1-\delta, x_2+\delta)\times(y_1 - \delta, y_2+\delta)$. Let $D \coloneqq R_{\delta}-R$. Then, it is easily seen that $D$ can be covered by four rectangles each of whose smaller side is no greater than $\delta$.
For any $\delta > 0$, by Urysohn's lemma (cf. 12.1 in \cite{Royden}), we can choose a continuous function $\ft(x, y)$, such that,
\[
 \begin{cases}
   \ft(x, y)= 1 &\text{if } (x, y) \in R,\\
   \ft(x, y)= 0 &\text{if } (x, y) \notin R_{\delta},\\
   0 \le \ft(x, y) \le 1 &\text{if } (x, y) \in D.
 \end{cases}
\]
By the triangle inequality, we have
\begin{align}
 |&\LL(R) - u(R)|> \epsilon   \label{eq:L1b}\\
 \Rightarrow |&\LL(\ft) - \LL(R)| + |u(\ft) - u(R)| + |\LL(\ft) - u(\ft)| > \epsilon. \nonumber
\end{align}
If we choose $\delta < \frac{\epsilon}{24}$, by (\ref{eq:L1a}), we have,
\[
|\LL(\ft) - \LL(R)| \le \LL(R_{\delta}) - \LL(R) = \LL(D) < 4 \left(\frac{\epsilon}{24} + \frac{\epsilon}{24}\right) = \frac{\epsilon}{3},
\]
for any $\pi \in S_n$, when $n$ is sufficiently large.
Since $u$ is absolutely continuous with respect to the Lebesgue measure, we may choose $\delta$ small enough such that
\[
|u(\ft) - u(R)|\le u(D) < \frac{\epsilon}{3}.
\]
Then by (\ref{eq:L1b}), for sufficiently large $n$,  we have,
\[
|\LL(R) - u(R)|> \epsilon \quad \Rightarrow \quad |\LL(\ft) - u(\ft)| > \frac{\epsilon}{3}.
\]
Thus,
\[
\m\Big(|\LL(R) - u(R)|> \epsilon\Big) \le \m\Big(|\LL(\ft) - u(\ft)| > \frac{\epsilon}{3}\Big).
\]
The lemma follows by Theorem \ref{T1}.
\end{proof}

The following property of Mallows permutations will be used in later proofs. It says that in a Mallows permutation, the relative chance that $\pi(i)$ takes two different values can be bounded in terms of the difference of those two values.

%%%%%%%%%%% Lemma 2 %%%%%%%%%%%%%%%%%%%%%
\begin{lemma}\label{L2}
For any $ 1\le i, s, t \le n$ and $q > 0$,
\[
\min( q^d, q^{-d}) \le \frac{\m(\pi(i)=s)}{\m(\pi(i)=t)}\le \max( q^d, q^{-d}),
\]
where $d = |s-t|$.
\end{lemma}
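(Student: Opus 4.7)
The plan is to build a value-swapping bijection between the event $\{\pi(i)=s\}$ and the event $\{\pi(i)=t\}$ and then apply Proposition \ref{P1} to control how much the inversion number can change under this bijection. Without loss of generality assume $s<t$, so $d=t-s$.

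First I would define the map $\phi\colon\{\pi\in S_n:\pi(i)=s\}\to\{\tau\in S_n:\tau(i)=t\}$ by letting $\phi(\pi)$ be the unique element of $Q(\pi,i)$ whose value at position $i$ equals $t$. Such an element exists and is unique because $|Q(\pi,i)|=n$ and the values $\{\tau(i):\tau\in Q(\pi,i)\}$ are exactly $[n]$; moreover, $\pi$ is recoverable from $\phi(\pi)$ in the same way, so $\phi$ is a bijection.

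Next, for a fixed $\pi$ with $\pi(i)=s$ and $\tau=\phi(\pi)$ with $\tau(i)=t$, Proposition \ref{P1} gives
\[
\inv(\tau)-\inv(\pi)=\bigl|\{r>i:s+1\le\pi(r)\le t\}\bigr|-\bigl|\{r<i:s+1\le\pi(r)\le t\}\bigr|.
\]
Since the integer window $[s+1,t]$ contains exactly $d$ values and $\pi$ takes each value at most once, each of the two cardinalities lies in $\{0,1,\ldots,d\}$, whence $|\inv(\tau)-\inv(\pi)|\le d$.

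Finally, writing out the definition of the Mallows measure and using the bijection to re-index the numerator,
\[
\frac{\mu_{n,q}(\pi(i)=t)}{\mu_{n,q}(\pi(i)=s)}
=\frac{\sum_{\pi:\pi(i)=s} q^{\inv(\phi(\pi))}}{\sum_{\pi:\pi(i)=s} q^{\inv(\pi)}},
\]
and every summand ratio $q^{\inv(\phi(\pi))-\inv(\pi)}$ lies in the interval $[\min(q^d,q^{-d}),\max(q^d,q^{-d})]$ regardless of whether $q\ge 1$ or $q<1$. Hence the weighted average does as well, and inverting (which leaves the $\min/\max$ pair invariant) yields the stated two-sided bound on $\mu_{n,q}(\pi(i)=s)/\mu_{n,q}(\pi(i)=t)$. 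There is no real obstacle here beyond recognizing that Proposition \ref{P1} is precisely the tool to track $\inv$ along $Q(\pi,i)$; the linearity of the exponential sum then does the rest.
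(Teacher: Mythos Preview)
Your argument is correct, but it takes a different route from the paper's. The paper proves only the adjacent case $d=1$ by composing with the transposition $(j,j+1)$ on the left, noting that this changes $\inv$ by exactly $\pm 1$, and then inducts on $d$; it never invokes the machinery of $Q(\pi,i)$ or Proposition~\ref{P1}. You instead jump directly from $s$ to $t$ in one step inside the class $Q(\pi,i)$ and use Proposition~\ref{P1} to bound $|\inv(\tau)-\inv(\pi)|$ by the size $d$ of the value window $[s+1,t]$. The paper's approach is more self-contained and elementary (it needs nothing beyond the effect of an adjacent transposition on the inversion count), while yours avoids the induction and shows that Proposition~\ref{P1} already packages exactly the combinatorics needed here. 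Both lead to the same weighted-ratio estimate at the end.
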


\begin{proof}
Suppose $0< q < 1$. We claim it suffices to show that
\begin{equation}
q \le \frac{\m(\pi(i)=j+1)}{\m(\pi(i)=j)} \le \frac{1}{q}, \label{eq:L2a}
\end{equation}
for any $j \in [n-1]$. This follows since by taking the reciprocal of (\ref{eq:L2a}), we get
\[
q \le \frac{\m(\pi(i)=j)}{\m(\pi(i)=j+1)} \le \frac{1}{q},
\]
and the lemma follows by induction on $d$.\\
Consider the bijection $T_j$ on $S_n$: $\pi \rightarrow (j, j+1) \ccirc \pi$. Here $\ccirc$ denotes the group operator of $S_n$, and $(j, j+1)$ denotes the transposition of $j$ and $j+1$. Specifically, for any $i \in [n]$
\[
T_j(\pi)(i) =
 \begin{cases}
   j &\text{if } \pi(i) = j+1, \\
   j+1 &\text{if } \pi(i) = j, \\
   \pi(i) &\text{otherwise}.
 \end{cases}
\]
From the definition, it is not hard to see that $|\inv(\pi) - \inv(T_j(\pi))|=1 $, for any $\pi \in S_n$. Hence,
\begin{equation}
q \le \frac{\m(T_j(\pi))}{\m(\pi)} \le \frac{1}{q}. \label{eq:L2b}
\end{equation}
Let $A_{i, j} \coloneqq \{\pi \in S_n: \pi(i) = j\}$. For any fixed $i \in [n]$,  $T_j$ is also a bijection of $A_{i, j}$ and $A_{i, j+1}$. Hence,
\begin{equation}
\frac{\m(\pi(i)=j+1)}{\m(\pi(i)=j)} = \frac{\sum_{\pi \in A_{i, j}}\m(T_j(\pi))}{\sum_{\pi \in A_{i, j}}\m(\pi)}, \label{eq:L2c}
\end{equation}
and (\ref{eq:L2a}) follows from (\ref{eq:L2b}) and (\ref{eq:L2c}). For the case $q>1$, the proof is similar. The lemma clearly also holds when $q=1$, which corresponds to the uniform measure on $S_n$.

\end{proof}

%%%%%%%%%%%% Lemma 3 %%%%%%%%%%%%%%%%%%%%%%%%%%%%%%%%%%%%%%%%%%%%%%%%%%%%%%%%%%

The following result establishes some bounds on the probability of a point in a Mallows permutation being within an interval.

\begin{lemma}\label{L3}
Suppose that $\{q_n\}_{n=1}^{\infty}$ is a sequence such that
\[
\lim_{n\to\infty} n (1-q_n) = \beta \in \mathbb{R}.
\]
For any sequence $\{a_n\}$ with $a_n \in [n]$ and any $0 \le y_1 < y_2 \le 1$,
\begin{align}
\limsup_{n \to \infty} \n \left( \frac{\pi(a_n)}{n} \in [y_1, y_2] \right) &\le (y_2 - y_1) e^{|\beta|}, \label{eq:L3a}\\
\liminf_{n \to \infty} \n \left( \frac{\pi(a_n)}{n} \in (y_1, y_2) \right) &\ge (y_2 - y_1) e^{-|\beta|}. \label{eq:L3b}
\end{align}
\end{lemma}

\begin{proof}
Here we only prove the case $\beta \ge 0$. The case $\beta < 0$ follows from the same argument. We also assume that $y_2 - y_1 < 1$, since the case $y_0 = 0, y_1 = 1$ can be verified easily.
Since $\lim_{n \to \infty} n (1-q_n) = \beta$ and $\lim_{n \to \infty} \frac{n \log{q_n}}{n(1 - q_n)} = -1$, we have
\[
\lim_{n \to \infty} q_n^n = \lim_{n \to \infty} e^{n \log{q_n}} = e^{-\beta}.
\]
Thus, for any $\delta >1 $, there exists $N>0$ such that $q_n^n \in \left( \frac{e^{-\beta}}{\delta}, \delta e^{-\beta}\right) $, when $n > N$.
By Lemma \ref{L2}, for any $n > N$ and any $i, s, t \in [n]$
\begin{equation}
\frac{\n (\pi(i) = s)} {\n (\pi(i) = t)} \le \max \left( q_n^n , \frac{1}{q_n^n} \right) < \delta e^{\beta}. \label{eq:L3c}
\end{equation}
Let $d = y_2 - y_1$ and $p_n = \min_{\{t: \frac{t}{n} \notin [y_1, y_2] \}} \left( \n (\pi(a_n) = t) \right)$. Note that the set $\{t: \frac{t}{n} \notin [y_1, y_2] \}$ is nonempty for sufficiently large $n$.
Then, by (\ref{eq:L3c}) and the fact that,
\[
\textstyle\Big|\Big\{k \in [n] :\frac{k}{n} \in [y_1, y_2]\Big\}\Big| \le nd+1, \quad \Big|\Big\{k \in [n]:\frac{k}{n} \notin [y_1, y_2]\Big\}\Big| \ge n(1-d)-1,
\]
we have,
\begin{align*}
\textstyle\n \left(\frac{\pi(a_n)}{n} \in [y_1, y_2] \right) &< (nd+1)\delta e^{\beta} p_n,\\
\textstyle\n \left(\frac{\pi(a_n)}{n} \notin [y_1, y_2] \right) &\ge (n(1-d) -1) p_n.
\end{align*}
Hence,
\begin{align*}
\n \Big(\textstyle\frac{\pi(a_n)}{n} \in [y_1, y_2] \Big) &< \frac{(nd+1)\delta e^{\beta}} {(n(1-d) -1)+(nd+1)\delta e^{\beta}}\\
 &< \frac{(nd+1)\delta e^{\beta}} {(n(1-d) -1)+(nd+1)}\\
 &= \frac{(nd+1)\delta e^{\beta}} {n},
\end{align*}
and (\ref{eq:L3a}) follows since $\delta$ can be chosen arbitrarily close to 1.
Similarly, to prove (\ref{eq:L3b}), define $p'_n = \min_{\{t: \frac{t}{n} \in (y_1, y_2) \}} \left( \n (\pi(a_n) = t) \right)$.
Then, by (\ref{eq:L3c}) and the fact that,
\[
\textstyle\Big|\Big\{k \in [n] :\frac{k}{n} \in (y_1, y_2)\Big\}\Big| \ge nd-1, \quad \Big|\Big\{k \in [n]:\frac{k}{n} \notin (y_1, y_2)\Big\}\Big| \le n(1-d)+1,
\]
we have
\begin{align*}
\textstyle\n \left(\frac{\pi(a_n)}{n} \in (y_1, y_2) \right) &\ge (nd-1)p'_n,\\
\textstyle\n \left(\frac{\pi(a_n)}{n} \notin (y_1, y_2) \right) &< (n(1-d)+1)\delta e^{\beta} p'_n.
\end{align*}
Hence,
\begin{align*}
\textstyle\n \left(\frac{\pi(a_n)}{n} \in (y_1, y_2) \right) &> \frac{(nd-1) }{(n(1-d)+1)\delta e^{\beta} + (nd-1)}\\
 &> \frac{(nd-1) }{(n(1-d)+1)\delta e^{\beta} + (nd-1)\delta e^{\beta}}\\
 &= \frac{(nd-1) e^{-\beta}}{n\delta}.
\end{align*}
(\ref{eq:L3b}) follows since $\delta$ can be chosen arbitrarily close to 1.\\
\end{proof}

%%%%%%%%%%%%%%%%%%%%%%%%%%% Lemma 9 %%%%%%%%%%%%%%%%%%%%%%%%%%%%%%%%%%%%%%%%%%%

In the next two lemmas, we introduce some properties of the density function $u(x, y, \beta)$ defined in Theorem \ref{T1}.

\begin{lemma}\label{L9} %lemma 3.5
With $u(x, y, \beta)$ defined as in (\ref{eq:t1}), we have
\[
\int_{0}^{1} u(x, y, \beta) \, dx = 1, \qquad \forall y \in [0, 1],
\]
\[
\int_{0}^{1} u(x, y, \beta) \, dy = 1, \qquad \forall x \in [0, 1].
\]
\end{lemma}
\begin{proof} 
Since $\cosh{(x)}$ is an even function, $u(x, y, \beta)$ is symmetric with respect to the line $y = x$. That is
\[
u(x, y, \beta) = u(y, x, \beta), \qquad \forall x, y \in [0, 1].
\]
Hence we only need to show the first identity. By Corollary 6.2 in \cite{Starr},
\begin{equation}
\frac{\partial^2 \ln{u(x, y, \beta)}}{\partial x \partial y}  = 2 \beta u(x, y, \beta). \label{eq:L9a}
\end{equation}
Therefore, we have
\begin{equation}
\int_{0}^{1} u(x, y, \beta) \, dx =
\frac{1}{2\beta} \left( \frac{\partial \ln{u(1, y, \beta)}}{\partial y} - \frac{\partial \ln{u(0, y, \beta)}}{\partial y} \right).  \label{eq:L9b}
\end{equation}
Next, by direct calculation, we have
\begin{equation}
u(1, y, \beta) = \frac{(\beta/2) \sinh(\beta/2)}{\big( \frac{1}{2}e^{-\frac{\beta}{4}}(e^{\beta}-1)e^{-\frac{\beta y}{2}}\big)^2}=
\frac{\beta e^{\beta y}}{e^{\beta} - 1},  \label{eq:L9c}
\end{equation}
\begin{equation}
u(0, y, \beta) = \frac{(\beta/2) \sinh(\beta/2)}{\big( \frac{1}{2}e^{\frac{\beta}{4}}(1-e^{-\beta})e^{\frac{\beta y}{2}}\big)^2}=
\frac{\beta e^{-\beta y}}{1 - e^{-\beta}}. \label{eq:L9d}
\end{equation}
Hence, we get
\[
\frac{\partial \ln{u(1, y, \beta)}}{\partial y} = \beta \qquad \text{and} \qquad \frac{\partial \ln{u(0, y, \beta)}}{\partial y} = - \beta.
\]
By (\ref{eq:L9b}), the lemma follows.
\end{proof}

In the remainder of this section, we will simply use $u(x, y)$ to denote $u(x, y, \beta)$.

%%%%%%%%%%%%%%%%%%%%%%%%%%% Lemma 6 %%%%%%%%%%%%%%%%%%%%%%%%%%%%%%%%%%%%%%%%%%%%

\begin{lemma}\label{L6}
For any $0 \le a, c, d \le 1$,
\[
-\beta \int_{c}^{d} \left(-\int_{0}^{a} u(x, y) \, dx + \int_{a}^{1}u(x, y) \, dx \right) \, dy = \ln{\frac{u(a, d)}{u(a, c)}}.
\]
\end{lemma}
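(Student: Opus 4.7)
The plan is to reduce the identity to two previously-established ingredients: the normalization $\int_0^1 u(x,y)\,dx = 1$ from Lemma \ref{L9}, and the PDE $\partial_x\partial_y \ln u(x,y,\beta) = 2\beta\, u(x,y,\beta)$ from equation (\ref{eq:L9a}), together with the boundary derivative $\partial_y\ln u(0,y,\beta) = -\beta$ that falls out of the explicit formula (\ref{eq:L9d}).

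First I would rewrite the inner integrand in $y$. By Lemma \ref{L9}, $\int_a^1 u(x,y)\,dx = 1 - \int_0^a u(x,y)\,dx$, so
\[
-\int_0^a u(x,y)\,dx + \int_a^1 u(x,y)\,dx = 1 - 2\int_0^a u(x,y)\,dx.
\]
Thus the left-hand side of the lemma equals
\[
\int_c^d \left(-\beta + 2\beta\int_0^a u(x,y)\,dx\right)\,dy.
\]

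Next I would recognize the integrand as a total derivative in $y$. Integrating the PDE (\ref{eq:L9a}) in $x$ from $0$ to $a$ gives
\[
\frac{\partial \ln u(a,y,\beta)}{\partial y} - \frac{\partial \ln u(0,y,\beta)}{\partial y} = 2\beta \int_0^a u(x,y,\beta)\,dx,
\]
and by (\ref{eq:L9d}) we have $\partial_y \ln u(0,y,\beta) = -\beta$, so
\[
\frac{\partial \ln u(a,y,\beta)}{\partial y} = -\beta + 2\beta\int_0^a u(x,y,\beta)\,dx.
\]

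Finally, integrating this identity in $y$ from $c$ to $d$ and applying the fundamental theorem of calculus yields
\[
\ln u(a,d,\beta) - \ln u(a,c,\beta) = \int_c^d\left(-\beta + 2\beta\int_0^a u(x,y,\beta)\,dx\right)\,dy,
\]
which matches the expression from the first step and equals $\ln\bigl(u(a,d)/u(a,c)\bigr)$, completing the proof. There is no real obstacle here; the only thing to watch is the bookkeeping of signs when combining the two pieces of the integrand and when using the boundary term at $x=0$.
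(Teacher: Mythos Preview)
Your proof is correct. It differs from the paper's argument in organization rather than ingredients: the paper fixes $c,d$, views both sides as functions of $a$, checks $f(0)=g(0)$ via Lemma~\ref{L9} and~(\ref{eq:L9d}), and then matches $f'(a)=g'(a)=2\beta\int_c^d u(a,y)\,dy$ by integrating the PDE~(\ref{eq:L9a}) in $y$. You instead integrate the same PDE in $x$ over $[0,a]$ to identify the $y$-integrand directly as $\partial_y\ln u(a,y)$, and then integrate in $y$. Both proofs amount to integrating~(\ref{eq:L9a}) over the rectangle $[0,a]\times[c,d]$ and using the same boundary information from Lemma~\ref{L9} and~(\ref{eq:L9d}); your version is a bit more streamlined since it avoids the separate verification of the boundary value at $a=0$.
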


\begin{proof}
For fixed $c, d \in [0, 1]$, define $f(a)$ to be the left-hand side of the identity and $g(a)$ to be the right-hand side of the identity.
Then, by Lemma \ref{L9} and (\ref{eq:L9d}), we have
\[
f(0) = g(0) = \beta (c - d).
\]
Hence, to prove the identity it suffices to show $f'(a) = g'(a)$ for any $a \in (0, 1)$.
Since $u(x, y)$ is bounded on $[0, 1]\times[0,1]$, we can change the order of integral and differentiation in the following,
\begin{align*}
f'(a) &= -\beta \frac{\partial}{\partial a}  \int_{c}^{d} \left(-\int_{0}^{a} u(x, y) \, dx + \int_{a}^{1}u(x, y) \, dx \right) \, dy\\
      &= -\beta \int_{c}^{d} \frac{\partial}{\partial a} \left(-\int_{0}^{a} u(x, y) \, dx + \int_{a}^{1}u(x, y) \, dx \right) \, dy\\
      &= -\beta \int_{c}^{d} \left( - u(a,y) - u(a, y) \right)\, dy\\
      &= 2 \beta \int_{c}^{d} u(a,y)\, dy.
\end{align*}
By (\ref{eq:L9a}), $\frac{\partial \ln{u(x, y)}}{\partial x}$ is the anti-derivative of $2\beta u(x, y)$ with respect to $y$. Thus we have
\[
g'(a) = \frac{\partial \ln{u(a, d)}}{\partial a} - \frac{\partial \ln{u(a, c)}}{\partial a} = 2 \beta \int_{c}^{d} u(a,y)\, dy.
\]

\end{proof}

%%%%%%%%%%%%%%%%%%%%%%%%%%% Lemma 7 %%%%%%%%%%%%%%%%%%%%%%%%%%%%%%%%%%%%%%%%%%%

\begin{lemma}\label{L7}
In the context of Lemma \ref{L3}, suppose $\{a_n\}_{n \ge 1}$ is a sequence such that $\lim_{n \to \infty} \frac{a_n}{n} = a$, where $a_n \in [n]$. 
For any $0 \le y_1 < y_2 < 1$,
\begin{equation}
\lim_{\epsilon \to 0^+} \limsup_{n \to \infty}
\left| \frac{\n \left( \frac{\pi(a_n)}{n} \in (y_2, y_2 + \epsilon) \right)}{\n \left( \frac{\pi(a_n)}{n} \in (y_1, y_1 + \epsilon) \right)}
- \frac{u(a, y_2)}{u(a, y_1)} \right| = 0.     \label{eq:L7a}
\end{equation}
For any $0 < y_1 < y_2 \le 1$,
\begin{equation}
\lim_{\epsilon \to 0^+} \limsup_{n \to \infty}
\left| \frac{\n \left( \frac{\pi(a_n)}{n} \in (y_2 - \epsilon, y_2) \right)}{\n \left( \frac{\pi(a_n)}{n} \in (y_1 - \epsilon, y_1) \right)}
- \frac{u(a, y_2)}{u(a, y_1)} \right| = 0.     \label{eq:L7b}
\end{equation}
\end{lemma}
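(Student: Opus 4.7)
I will use Proposition~\ref{P1} to rewrite the ratio in (\ref{eq:L7a}) as a conditional expectation of $q_n^{\Delta}$ under $\n$, then apply Lemma~\ref{L1} to determine the asymptotic value of $\Delta/n$, and finally invoke Lemma~\ref{L6} to identify the resulting exponential with $u(a,y_2)/u(a,y_1)$. Identity (\ref{eq:L7b}) will be reduced to (\ref{eq:L7a}) via the reversal of Proposition~\ref{P3}.

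\textbf{Step 1: an exact identity.} Fix $\epsilon>0$ and set $I_i:=\{m\in[n]:m/n\in(y_i,y_i+\epsilon)\}$ for $i=1,2$. For each $j<k$ in $[n]$, Proposition~\ref{P1} gives a bijection from $\{\pi:\pi(a_n)=j\}$ onto $\{\tau:\tau(a_n)=k\}$ under which the inversion count changes by
\[
\Delta_{j,k}(\pi):=|\{t>a_n:j<\pi(t)\le k\}|-|\{t<a_n:j<\pi(t)\le k\}|.
\]
Thus $\n(\pi(a_n)=k)=\E_\n[\mathds{1}_{\pi(a_n)=j}\,q_n^{\Delta_{j,k}(\pi)}]$; summing over $(j,k)\in I_1\times I_2$ yields
\[
\frac{\n(\pi(a_n)/n\in(y_2,y_2+\epsilon))}{\n(\pi(a_n)/n\in(y_1,y_1+\epsilon))}=\frac{1}{|I_1|}\,\E_\n\!\left[\sum_{k\in I_2}q_n^{\Delta_{\pi(a_n),k}(\pi)}\,\bigg|\,\pi(a_n)\in I_1\right].
\]

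\textbf{Step 2: asymptotics of $\Delta/n$.} The quantity $\Delta_{j,k}(\pi)$ is exactly $n$ times the difference of $\LL$-values on the two rectangles $(a_n/n,1]\times(j/n,k/n]$ and $[0,a_n/n)\times(j/n,k/n]$. Applying Lemma~\ref{L1} to each, then sandwiching the family $\{R_{j,k}\}_{(j,k)\in I_1\times I_2}$ between its two extremal rectangles (whose $u$-volumes differ by only $O(\epsilon)$) to upgrade fixed-rectangle convergence to uniform convergence, I get
\[
\sup_{(j,k)\in I_1\times I_2}\left|\frac{\Delta_{j,k}(\pi)}{n}-\Lambda\right|=O(\epsilon)+o_{\p}(1)\quad\text{under }\n,
\]
where $\Lambda:=\int_{y_1}^{y_2}\!\bigl[\int_a^1 u(x,y)\,dx-\int_0^a u(x,y)\,dx\bigr]dy$. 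Since Lemma~\ref{L3} gives $\n(\pi(a_n)\in I_1)\ge c\epsilon$ for some $c>0$ and large $n$, the same convergence holds under $\n(\cdot\mid\pi(a_n)\in I_1)$. Combined with $\log q_n=-\beta/n+o(1/n)$, this yields $q_n^{\Delta_{j,k}(\pi)}\to e^{-\beta\Lambda}$ in conditional probability, and Lemma~\ref{L6} identifies $e^{-\beta\Lambda}$ with $u(a,y_2)/u(a,y_1)$ up to an $O(\epsilon)$ multiplicative correction.

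\textbf{Step 3: expectation, conclusion, and reversal.} The integrand in Step~1 is uniformly bounded by $\max(q_n^n,q_n^{-n})\to e^{|\beta|}$ (via Lemma~\ref{L2}), so dominated convergence promotes the pointwise limit of Step~2 to the expectation; using $|I_2|/|I_1|\to 1$, the right-hand side of the Step~1 identity tends to $u(a,y_2)/u(a,y_1)\cdot(1+O(\epsilon))$, and letting $\epsilon\to 0^+$ proves (\ref{eq:L7a}). For (\ref{eq:L7b}), set $\rho(i):=n+1-\pi(i)$; then $\inv(\rho)=\binom{n}{2}-\inv(\pi)$ so $\rho\sim\mu_{n,1/q_n}$ with $n(1-1/q_n)\to-\beta$, and $\{\pi(a_n)/n\in(y_i-\epsilon,y_i)\}$ corresponds (up to a $1/n$ shift) to $\{\rho(a_n)/n\in(1-y_i,1-y_i+\epsilon)\}$. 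Applying (\ref{eq:L7a}) to $\rho$, combined with the symmetry $u(x,1-y,-\beta)=u(x,y,\beta)$ (a direct check in (\ref{eq:M1})), delivers (\ref{eq:L7b}). The \emph{main obstacle} is the uniformity in Step~2: Lemma~\ref{L1} provides convergence in probability only for each \emph{fixed} rectangle, so uniform control over the $\Theta(n^2\epsilon^2)$ rectangles $R_{j,k}$ has to come from a monotonicity/bracketing argument using their two extremal members; the subsequent passage from the unconditional $\n$ to the conditional $\n(\cdot\mid\pi(a_n)\in I_1)$ is benign precisely because we condition on an event of probability $\Theta(\epsilon)$ rather than the much rarer single-value event $\{\pi(a_n)=j\}$ of probability $\Theta(1/n)$.
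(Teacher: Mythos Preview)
Your argument is correct and rests on exactly the same ingredients as the paper's proof: Proposition~\ref{P1} to express the ratio through an inversion-count difference $\Delta$, Lemma~\ref{L1} to pin down $\Delta/n$ via concentration of $L_\pi$ on rectangles, Lemma~\ref{L3} for the $\Theta(\epsilon)$ lower bound on the denominator, and Lemma~\ref{L6} to identify $e^{-\beta\Lambda}$ with $u(a,y_2)/u(a,y_1)$. The packaging differs: the paper partitions $S_n$ into the classes $Q(\pi,a_n)$, restricts to a high-probability set $GD(n,\lambda)$ on which every class is ``good'', bounds $\n(B_n\cap Q)/\n(A_n\cap Q)$ deterministically on each class via the explicit counting estimates (\ref{eq:L7U})--(\ref{eq:L7L}), and then assembles the global ratio using the mediant inequality; your conditional-expectation identity in Step~1 together with dominated convergence plays the same role more probabilistically, and your bracketing in Step~2 is the analogue of those counting estimates. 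One small point worth making explicit: the rectangles in Step~2 have $x$-endpoint $a_n/n$, so a second bracketing against the fixed rectangles with endpoint $a$ (which the paper carries out when deriving (\ref{eq:L7k})) is needed before Lemma~\ref{L1} applies. Your reduction of (\ref{eq:L7b}) to (\ref{eq:L7a}) via the complement map $\rho(i)=n+1-\pi(i)$ and the symmetry $u(x,1-y,-\beta)=u(x,y,\beta)$ is a clean alternative to the paper's ``similar argument'' remark.
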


\begin{proof}
Here we only prove (\ref{eq:L7a}), since (\ref{eq:L7b}) follows from the similar argument.\\
To prove (\ref{eq:L7a}), we need to show that for any $\eta >0$, there exists $\epsilon_0 >0$ such that for any fixed $\epsilon < \epsilon_0$, there exists $N>0$, which may depend on $\epsilon$, such that for any $n > N$, we have
\begin{equation}
\left| \frac{\n \left( \frac{\pi(a_n)}{n} \in (y_2, y_2 + \epsilon) \right)}{\n \left( \frac{\pi(a_n)}{n} \in (y_1, y_1 + \epsilon) \right)}
- \frac{u(a, y_2)}{u(a, y_1)} \right| < \eta  .  \label{eq:L7c}
\end{equation}
First, we define the following two rectangles:
\[
R_0 \coloneqq [0, a] \times [y_1, y_2], \qquad R_1 \coloneqq [a, 1] \times [y_1, y_2].\\
\]
Next define
\[
G(n, \lambda) \coloneqq \left\{ \pi \in S_n : \left| \LL(A) - u(A) \right| < \lambda, \text{ for any } A \in \{R_0, R_1\} \right\}.
\]
Let $\overline{G}(n, \lambda) \coloneqq S_n \setminus G(n, \lambda)$ denote the complement of $G(n, \lambda)$. Then,
\[
\overline{G}(n, \lambda) = \cup_{A \in \{R_0, R_1\}}
\left\{ \pi \in S_n : \left| \LL(A) - u(A) \right| \ge \lambda \right\}.
\]
Thus by Lemma \ref{L1}, for any $\epsilon_0 > 0$ and any $\lambda > 0$, we have
\begin{equation}
\lim_{n \to \infty} \n \left( \overline{G}(n, \lambda) \right) = 0 . \label{eq:L7d}
\end{equation}
Define
\[
GD(n, \lambda) \coloneqq \left\{ \pi \in S_n : Q(\pi, a_n) \cap G(n, \lambda/2) \neq \varnothing \right\}.
\]
Note that, for any rectangle $R$ and any $\tau, \xi \in Q(\pi, a_n)$,
\[
|L_{\tau}(R) - L_{\xi}(R)| \le \frac{1}{n}.
\]
Thus, when $n > \frac{2}{\lambda}$, it follows from triangle inequality that
\begin{equation}
GD(n, \lambda) \subset G(n, \lambda).  \label{eq:L7e}
\end{equation}
On the other hand, by the definition of $GD(n, \lambda)$ and the fact that, for any $i \in [n]$,  $\pi \in Q(\pi, i)$, it follows that
\begin{equation}
G(n, \lambda/2) \subset GD(n, \lambda). \label{eq:L7z}
\end{equation}
Hence by (\ref{eq:L7d}) and (\ref{eq:L7z}), for any $\lambda > 0$, we have
\begin{equation}
\lim_{n \to \infty} \n \left(GD(n, \lambda) \right) = 1 .  \label{eq:L7f}
\end{equation}
Next, given $\epsilon \in (0,  \epsilon_0)$ where the value of $\epsilon_0$ is to be determined, define
\[
\textstyle A_n \coloneqq \{ \pi \in S_n :  \frac{\pi(a_n)}{n} \in (y_1, y_1 + \epsilon) \},
\quad
B_n \coloneqq \{ \pi \in S_n :  \frac{\pi(a_n)}{n} \in (y_2, y_2 + \epsilon) \}.
\]
Then, by Lemma \ref{L3}, when $n$ is sufficiently large, we have
\[
\textstyle\n (A_n) > \frac{\epsilon}{2}\,e^{-|\beta|}, \qquad \n (B_n) > \frac{\epsilon}{2} \, e^{-|\beta|}.
\]
Thus, by (\ref{eq:L7f}), there exists an $N_1 > 0$ such that, for any $n > N_1$, we have
\[
 \left| \frac{\n \left(B_n \cap GD(n, \lambda) \right)}{\n \left(A_n \cap GD(n, \lambda) \right)}
  - \frac{\n (B_n)}{\n (A_n)} \right| < \frac{\eta}{2}.
\]
Therefore, to prove (\ref{eq:L7c}), it suffices to show that for sufficiently large $n$, we have
\begin{equation}
\left| \frac{\n \left(B_n \cap GD(n, \lambda) \right)}{\n \left(A_n \cap GD(n, \lambda) \right)}
- \frac{u(a, y_2)}{u(a, y_1)} \right| < \frac{\eta}{2}.  \label{eq:L7g}
\end{equation}
In order to prove (\ref{eq:L7g}), we are going to exploit two things. The first one is the fact that $\{ Q(\pi, a_n) : \pi \in GD(n, \lambda) \}$ is a partition of $GD(n, \lambda)$. The second is the following,
\[
\frac{c_i}{d_i} > r,\, c_i>0,\, d_i>0 \text{ for } \forall i \in [m]\, \Rightarrow\, \frac{\sum_{i=1}^{m}c_i}{\sum_{i=1}^{m}d_i} > r,
\]
\[
\frac{c_i}{d_i} < r,\, c_i>0,\, d_i>0 \text{ for } \forall i \in [m]\, \Rightarrow\, \frac{\sum_{i=1}^{m}c_i}{\sum_{i=1}^{m}d_i} < r.
\]
Hence, to prove (\ref{eq:L7g}), it suffices to show that, for sufficiently large $n$, we have
\begin{equation}
\left| \frac{\n \left(B_n \cap Q(\pi, a_n) \right)}{\n \left(A_n \cap Q(\pi, a_n) \right)}
- \frac{u(a, y_2)}{u(a, y_1)} \right| < \frac{\eta}{2} ,  \label{eq:L7h}
\end{equation}
for any $Q(\pi, a_n) \subset GD(n, \lambda)$. Note that $A_n \cap Q(\pi, a_n)$ is nonempty for any $\pi \in S_n$, when $n > 1/\epsilon$.
The strategy to prove (\ref{eq:L7h}) is the following, we show that when $n$ is sufficiently large, for any $Q(\pi, a_n) \subset GD(n, \lambda)$ and any $\tau \in B_n \cap Q(\pi, a_n)$, $\xi \in A_n \cap Q(\pi, a_n)$, we have
\begin{equation}
\Big| \frac{1}{n}\left( \inv(\tau) - \inv(\xi) \right) - I\,\Big| < 2 \lambda + 4 \epsilon + \frac{4}{n}. \label{eq:L7k}
\end{equation}
Here 
\[
I \coloneqq \int_{y_1}^{y_2} \left(-\int_{0}^{a} u(x, y) \, dx + \int_{a}^{1}u(x, y) \, dx \right) \, dy = u(R_1) - u(R_0).
\]
Note that $\frac{\n(\tau)}{\n(\xi)} = q_n^{\inv(\tau) - \inv(\xi)}$. Thus, by (\ref{eq:L7k}), for any $\tau \in B_n \cap Q(\pi, a_n)$, $\xi \in A_n \cap Q(\pi, a_n)$, we have
\[
q_n^{n(I + 2 \lambda + 4 \epsilon + 4/n)} \le \frac{\n(\tau)}{\n(\xi)} \le q_n^{n(I - 2 \lambda - 4 \epsilon - 4/n)}.
\]
Here we assume $0 < q_n < 1$. (The cases $q_n > 1$ and $q_n = 1$ follow by similar argument.) By the definition of $A_n, B_n$, we have 
\[
n\epsilon - 1 \le |A_n \cap Q(\pi, a_n)|,\ |B_n \cap Q(\pi, a_n)| \le n\epsilon + 1.
\]
Hence we have
\[
\frac{n\epsilon -1}{n\epsilon + 1} q_n^{n(I + 2 \lambda + 4 \epsilon + 4/n)}
\le \frac{\n \left(B_n \cap Q(\pi, a_n) \right)}{\n \left(A_n \cap Q(\pi, a_n) \right)}
\le \frac{n\epsilon + 1}{n\epsilon - 1} q_n^{n(I - 2 \lambda - 4 \epsilon - 4/n)}.
\]
By Lemma \ref{L6} and the fact that $\lim_{n \to \infty} q_n^n = e^{-\beta}$ and $\lim_{n \to \infty} q_n = 1$ , we have
\[
\lim_{n \to \infty} \frac{n\epsilon -1}{n\epsilon + 1} q_n^{n(I + 2 \lambda + 4 \epsilon + 4/n)} =
\frac{u(a, y_2)}{u(a, y_1)} e^{-\beta (2 \lambda + 4 \epsilon)},
\]
\[
\lim_{n \to \infty} \frac{n\epsilon + 1}{n\epsilon - 1} q_n^{n(I - 2 \lambda - 4 \epsilon - 4/n)} =
\frac{u(a, y_2)}{u(a, y_1)} e^{\beta (2 \lambda + 4 \epsilon)}.
\]
Thus, we can choose $\epsilon_0$ and $\lambda$ small enough such that, for any $\epsilon \in (0, \epsilon_0)$, (\ref{eq:L7h}) holds for sufficiently large $n$.\\

The remaining part of the proof is to show (\ref{eq:L7k}). Suppose $n$ is sufficiently large such that $\frac{a_n}{n} \in (a - \epsilon, a + \epsilon)$. Without loss of generality, suppose
$\frac{a_n}{n} \in [a, a + \epsilon)$. (The other case can be shown in a similar argument.) By Proposition \ref{P1}, for any $Q(\pi, a_n) \subset GD(n, \lambda)$, and for any $\tau \in B_n \cap Q(\pi, a_n)$, $\xi \in A_n \cap Q(\pi, a_n)$, we have
\begin{align*}
  &\inv(\tau) - \inv(\xi)\\
 =\,&\textstyle|\{t > a_n : \xi(a_n) < \xi(t) \le \tau(a_n)\}| - |\{t < a_n : \xi(a_n) < \xi(t) \le \tau(a_n)\}|\\
 =\,&\textstyle|\{\frac{t}{n} > \frac{a_n}{n} : \frac{\xi(a_n)}{n} < \frac{\xi(t)}{n} \le \frac{\tau(a_n)}{n} \}|
    - |\{\frac{t}{n} < \frac{a_n}{n} : \frac{\xi(a_n)}{n} < \frac{\xi(t)}{n} \le \frac{\tau(a_n)}{n} \}|\\
 \le\,&\textstyle|\{\frac{t}{n} > a : y_1 < \frac{\xi(t)}{n} < y_2 + \epsilon \}|
    - |\{\frac{t}{n} < a : y_1 + \epsilon \le \frac{\xi(t)}{n} \le y_2 \}|\\
 =\, &\textstyle|\{t : \big(\frac{t}{n}, \frac{\xi(t)}{n}\big) \in (a, 1] \times (y_1, y_2 + \epsilon) \}|\\
      &\textstyle\qquad\qquad\qquad\qquad\qquad - |\{t : \big(\frac{t}{n}, \frac{\xi(t)}{n}\big) \in (0, a) \times [y_1 + \epsilon, y_2 ] \}|\\
 \le\,&\textstyle|\{t : \big(\frac{t}{n}, \frac{\xi(t)}{n}\big) \in (a, 1] \times (y_1, y_2] \}| + (n\epsilon+1)\\
     &\textstyle\qquad\qquad\qquad\qquad\qquad - |\{t : \big(\frac{t}{n}, \frac{\xi(t)}{n}\big) \in (0, a) \times [y_1 , y_2] \}| + (n\epsilon+1)\\
 \le\,&\textstyle|\{t : \big(\frac{t}{n}, \frac{\xi(t)}{n}\big) \in [a, 1] \times [y_1, y_2] \}|\\
     &\textstyle\qquad\qquad\qquad\qquad\qquad - |\{t : \big(\frac{t}{n}, \frac{\xi(t)}{n}\big) \in [0, a] \times [y_1 , y_2] \}| + 2n\epsilon +4\\
 =\, &\textstyle n\lx\big([a, 1] \times [y_1, y_2] \big) - n\lx\big([0, a] \times [y_1, y_2] \big) + 2n\epsilon +4\\
 =\, &\textstyle n\lx(R_1 ) - n\lx( R_0 ) + 2n\epsilon +4.
\end{align*}
The first inequality above follows because $\frac{a_n}{n} \ge a$, $\frac{\xi(a_n)}{n} \in (y_1, y_1 + \epsilon)$ and $\frac{\tau(a_n)}{n} \in (y_2, y_2 + \epsilon)$.
The second inequality follows because
\[
\textstyle|\{t \in [n] : \frac{\xi(t)}{n} \in (y_2, y_2+\epsilon) \}| \le n\epsilon +1,
\]
\[
\textstyle|\{t \in [n] : \frac{\xi(t)}{n} \in [y_1, y_1+\epsilon) \}| \le n\epsilon +1.
\]
The third inequality follows because, since we change $(0, a)$ to $[0, a]$ in the second term, we add two in the end to compensate the possible
extra subtraction. Hence, we have
\begin{align}
\textstyle\frac{1}{n}(\inv(\tau) - \inv(\xi))
&\textstyle\le \lx(R_1 ) - \lx(R_0) + 2\epsilon + \frac{4}{n}   \label{eq:L7U}\\
&\textstyle\le u(R_1) - u(R_0) + 2\lambda + 2\epsilon + \frac{4}{n}  \nonumber\\
&\textstyle = I + 2\lambda + 2\epsilon + \frac{4}{n}. \nonumber
\end{align}
Here we use the fact that, by (\ref{eq:L7e}), $\xi \in GD(n, \lambda) \subset G(n, \lambda)$.

Similarly, to show the lower bound of $\inv(\tau) - \inv(\xi)$, we have
\begin{align*}
  &\inv(\tau) - \inv(\xi)\\
 =\,&\textstyle|\{t > a_n : \xi(a_n) < \xi(t) \le \tau(a_n)\}| - |\{t < a_n : \xi(a_n) < \xi(t) \le \tau(a_n)\}|\\
 =\,&\textstyle|\{\frac{t}{n} > \frac{a_n}{n} : \frac{\xi(a_n)}{n} < \frac{\xi(t)}{n} \le \frac{\tau(a_n)}{n} \}|
    - |\{\frac{t}{n} < \frac{a_n}{n} : \frac{\xi(a_n)}{n} < \frac{\xi(t)}{n} \le \frac{\tau(a_n)}{n} \}|\\
 =\, &\textstyle|\{\frac{t}{n} > a : \frac{\xi(a_n)}{n} < \frac{\xi(t)}{n} \le \frac{\tau(a_n)}{n} \}|
    - |\{ \frac{a_n}{n} \ge \frac{t}{n} > a : \frac{\xi(a_n)}{n} < \frac{\xi(t)}{n} \le \frac{\tau(a_n)}{n} \}|\\
 &\textstyle - |\{\frac{t}{n} < a : \frac{\xi(a_n)}{n} < \frac{\xi(t)}{n} \le \frac{\tau(a_n)}{n} \}|
     - |\{a \le \frac{t}{n} < \frac{a_n}{n} : \frac{\xi(a_n)}{n} < \frac{\xi(t)}{n} \le \frac{\tau(a_n)}{n} \}|\\
 \ge\,&\textstyle|\{\frac{t}{n} > a : y_1 + \epsilon \le \frac{\xi(t)}{n} \le y_2 \}| - (n \epsilon +1)\\
        &\textstyle\qquad\qquad\qquad\qquad\qquad\qquad - |\{\frac{t}{n} < a : y_1 < \frac{\xi(t)}{n} < y_2 + \epsilon \}|- (n \epsilon +1)\\
 =\, &\textstyle|\{t : \big(\frac{t}{n}, \frac{\xi(t)}{n}\big) \in (a, 1] \times [y_1 + \epsilon, y_2]\}|\\
     &\textstyle\qquad\qquad\qquad\qquad - |\{t : \big(\frac{t}{n}, \frac{\xi(t)}{n}\big) \in (0, a) \times (y_1, y_2 + \epsilon) \}|
       - 2(n \epsilon +1)\\
 \ge\,&\textstyle|\{t : \big(\frac{t}{n}, \frac{\xi(t)}{n}\big) \in (a, 1] \times [y_1, y_2]\}| - (n\epsilon+1)\\
     &\textstyle\qquad\qquad\quad - |\{t : \big(\frac{t}{n}, \frac{\xi(t)}{n}\big) \in (0, a) \times (y_1, y_2] \}| - (n\epsilon+1)
       - 2(n \epsilon +1)\\
 =\, &\textstyle n\lx\big([a, 1] \times[y_1, y_2] \big) - n\lx\big([0, a] \times[y_1, y_2]\big) - 4n\epsilon -4\\
 =\, &\textstyle n\lx(R_1 ) - n\lx( R_0 ) - 4n\epsilon -4.
\end{align*}
The first inequality above follows since, by the definition of $A_n, B_n$, we have $\frac{\xi(a_n)}{n} \in (y_1, y_1 + \epsilon)$, $\frac{\tau(a_n)}{n} \in (y_2, y_2 + \epsilon)$
and, since $\frac{a_n}{n}\in [a, a + \epsilon)$,
\[
\textstyle|\{t\in [n]: \frac{a_n}{n} \ge \frac{t}{n} > a\}| \le n\epsilon +1,
\qquad |\{t\in[n]: a \le \frac{t}{n} < \frac{a_n}{n}\}| \le n\epsilon +1.
\]
The second inequality follows because
\[
\textstyle|\{t \in [n] : \frac{\xi(t)}{n} \in [y_1, y_1+\epsilon)  \}| \le n\epsilon +1,
\]
\[
\textstyle|\{t \in [n] : \frac{\xi(t)}{n} \in (y_2, y_2+\epsilon) \}| \le n\epsilon +1.
\]
Hence, we have
\begin{align}
\textstyle\frac{1}{n}(\inv(\tau) - \inv(\xi))
&\textstyle\ge \lx(R_1 ) - \lx(R_0) - 4\epsilon - \frac{4}{n}   \label{eq:L7L}\\
&\textstyle\ge u(R_1) - u(R_0) - 2\lambda - 4\epsilon - \frac{4}{n}  \nonumber\\
&\textstyle= I - 2\lambda - 4\epsilon - \frac{4}{n}. \nonumber
\end{align}
Here again we use the fact that, by (\ref{eq:L7e}), $\xi \in GD(n, \lambda) \subset G(n, \lambda)$.
The fact that (\ref{eq:L7k}) follows from (\ref{eq:L7U}) and (\ref{eq:L7L}) completes the proof.

\end{proof}

To complete the proof of Lemma \ref{M1} we use the following result (cf. 7.2.5 in \cite{Ash}) and the next two lemmas.

\begin{theorem} \label{thm:AshDade}
Let $\{u_n\}_{n \ge 1}$ be a sequence of finite measure on $\mathbb{R}$. If\, $\{u_n\}_{n \ge 1}$ is tight, and every weakly convergent
subsequence of \,$\{u_n\}_{n \ge 1}$ converges to the measure $v$, then $u_n \overset{d}{\longrightarrow} v$.
\end{theorem}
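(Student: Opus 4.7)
The plan is to argue by contradiction, using Prokhorov's theorem to manufacture a weakly convergent subsequence whose limit the second hypothesis forces to equal $v$. First, suppose $u_n$ does not converge weakly to $v$. By the definition of weak convergence, there exist a bounded continuous function $f: \mathbb{R} \to \mathbb{R}$, an $\epsilon > 0$, and a subsequence $\{u_{n_k}\}_{k \ge 1}$ such that
$$|u_{n_k}(f) - v(f)| \ge \epsilon \quad \text{for every } k \ge 1.$$

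Next I would invoke tightness. Since $\{u_n\}_{n \ge 1}$ is tight, its subsequence $\{u_{n_k}\}$ inherits tightness, so Prokhorov's theorem supplies a further subsequence $\{u_{n_{k_j}}\}_{j \ge 1}$ that converges weakly to some finite measure $w$ on $\mathbb{R}$. The second hypothesis of the theorem then forces $w = v$, because $\{u_{n_{k_j}}\}$ is itself a weakly convergent subsequence of the original sequence $\{u_n\}$. In particular, $u_{n_{k_j}}(f) \to v(f)$, which contradicts the persistent separation $|u_{n_{k_j}}(f) - v(f)| \ge \epsilon$ inherited from $\{u_{n_k}\}$, and the theorem follows.

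This is essentially a double subsequence extraction, and I do not expect serious obstacles. The only point requiring care is verifying that Prokhorov's theorem applies in the form needed: the phrase ``finite measure'' should be read as carrying a uniform bound on total mass, so that tightness implies relative sequential compactness in the weak topology on nonnegative finite Borel measures on $\mathbb{R}$. Once that interpretation is fixed, no further ingredients are required beyond the definition of weak convergence and the standard statement of Prokhorov's theorem.
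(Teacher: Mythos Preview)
Your argument is correct and is the standard subsequence-of-subsequence contradiction via Prokhorov's theorem. The paper does not supply its own proof of this statement at all: it simply quotes the result from a textbook (cf.\ 7.2.5 in \cite{Ash}), so there is nothing to compare against.
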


%%%%%%%%%%%%%%%%%%%%% Lemma 4 %%%%%%%%%%%%%%%%%%%%%%%%%%%%%%%%%%%%%%%%%%%%%%%%%

\begin{lemma}\label{L4}
In the context of Lemma \ref{M1}, let
$\{a_{t_n}\}$ be a subsequence of $\{a_n\}$ such that
\[
\mu_{t_n, q_{t_n}} \left( \frac{\pi(a_{t_n})}{t_n} \in ( \cdot ) \right) \overset{d}{\longrightarrow} v.
\]
Then the distribution function $F_v(y)$ of the limit probability measure $v$ is absolutely continuous. Here
$\mu_{t_n, q_{t_n}} \left( \frac{\pi(a_{t_n})}{t_n} \in ( \cdot ) \right)$ denotes the probability measure induced by
$\frac{\pi(a_{t_n})}{t_n}$ under $\mu_{t_n, q_{t_n}}$.
\end{lemma}

\begin{proof}
For any $\epsilon > 0$, let $\delta = \frac{\epsilon}{4 e^{|\beta|}}$. By the definition of absolute continuity, we will show that, for any
$\{(y_1, y_2), (y_3, y_4), \dots, (y_{2m-1}, y_{2m})\}$ with $y_{2k-1} < y_{2k}$ and
$\sum_{k=1}^{m} |y_{2k} - y_{2k-1}| < \delta$, we have $\sum_{k=1}^{m} |F_v(y_{2k}) - F_v(y_{2k-1})| < \epsilon $.
Without loss of generality, we may assume that every $y_i$ is a continuous point of $F_v(y)$ with $0 \le y_i \le 1$. Since there are at most countably many discontinuity of $F_v(y)$, we can always choose a new set of interval $\{(y'_{2k-1}, y'_{2k})\}$  such that $F_v(y)$ is continuous at every $y'_i$,
$ [y_{2k-1}, y_{2k}] \subset [y'_{2k-1}, y'_{2k}]$ and $\sum_{k=1}^{m} |y'_{2k} - y'_{2k-1}| < \delta$ still holds. Next, for the simplicity of notation, define
\begin{equation}
v_n \coloneqq \mu_{t_n, q_{t_n}} \left( \frac{\pi(a_{t_n})}{t_n} \in ( \cdot ) \right). \label{eq:L4a}
\end{equation}
By Lemma \ref{L3}, there exists $N_1>0$ such that for any $n>N_1$,
\[
v_n \left( [y_{2k-1}, y_{2k}] \right) \le 2 (y_{2k}- y_{2k-1}) e^{|\beta|},
\]
for all $k \in [m]$. Since $v_n \overset{d}{\longrightarrow} v$, there exists
$N_2 >0$ such that for any $n>N_2$,
\[
\left|F_v(y_{2k}) - F_v(y_{2k-1}) - v_n \left( [y_{2k-1}, y_{2k}] \right) \right| < \frac{\epsilon}{2m},
\]
for all $k \in [m]$. Let $n = \max{(N_1, N_2)}+1$, we have
\begin{align*}
 &\sum_{k=1}^{m} |F_v(y_{2k}) - F_v(y_{2k-1})|\\
 \le& \sum_{k=1}^{m}\left|F_v(y_{2k}) - F_v(y_{2k-1})- v_n \left( [y_{2k-1}, y_{2k}] \right) \right| + \sum_{k=1}^{m} v_n \left( [y_{2k-1}, y_{2k}] \right)\\
 <&\, \frac{\epsilon}{2} + 2 e^{|\beta|} \sum_{k=1}^{m}(y_{2k} - y_{2k-1})\\
 <&\, \frac{\epsilon}{2} + 2 e^{|\beta|} \delta\\
 =&\  \epsilon.
\end{align*}
\end{proof}

%%%%%%%%%%%%%%%% Lemma 5 %%%%%%%%%%%%%%%%%%%%%%%%%%%%%%%%%%%%%%%%%%%%%%%%%%%%%%%%%%%%%

\begin{lemma}\label{L5}
In the context of Lemma \ref{L4}, we have
\[
F_v(y) = \int_{0}^{y} u(a, t, \beta)\, dt,
\]
for any $y \in [0, 1]$. Here $u(x, y, \beta)$ is defined in (\ref{eq:t1}).
\end{lemma}
\begin{proof}
For the simplicity of notation, we will use $u(x, y)$ to denote $u(x, y, \beta)$. By Lemma \ref{L4}, $F_v(y)$ is absolutely continuous. Hence $F_v(y)$ is differentiable almost
everywhere, say $F'_v(y) = f(y)$ a.e.\ on $[0, 1]$, and moreover, we have $F_v(y) = \int_{0}^{y} f(t) \, dt$. Here we use the fact that the support of $v$ is $[0, 1]$.
Note that, by Lemma \ref{L3}, for any $y \in (0, 1)$ such that $F'_v(y) = f(y)$, we have $f(y) \ge e^{-|\beta|} > 0$.
Then in order to show $f(y) = u(a, y)$ a.e., it suffices to show
\begin{equation}
\frac{f(y_2)}{f(y_1)} = \frac{u(a, y_2)}{u(a, y_1)}, \label{eq:L5a}
\end{equation}
for any $y_1, y_2 \in A$, where $A \coloneqq \{ y\in (0,1): F'_v(y) = f(y)\}$. This is because, for any $y \in A$, we have
\[
\textstyle\frac{1}{f(y)} = \int_{0}^{1} \frac{f(z)}{f(y)} \, dz = \int_{A} \frac{f(z)}{f(y)} \, dz
= \int_{A} \frac{u(a,z)}{u(a,y)} \, dz = \int_{0}^{1} \frac{u(a,z)}{u(a,y)} \, dz = \frac{1}{u(a, y)}.
\]
Here we use the fact that the Lebesgue measure of $A$ is 1 as well as Lemma \ref{L9} in the last equality.
Next, since we have
\begin{align*}
\lim_{\epsilon \to 0^+} \frac{v((y_2, y_2+\epsilon))}{v((y_1, y_1+\epsilon))}
=& \lim_{\epsilon \to 0^+} \frac{F_v(y_2+\epsilon) - F_v(y_2)}{F_v(y_1+\epsilon) - F_v(y_1)}\\
=& \lim_{\epsilon \to 0^+} \frac{F_v(y_2+\epsilon) - F_v(y_2)}{\epsilon}\Big/\frac{F_v(y_1+\epsilon) - F_v(y_1)}{\epsilon}\\
=&\, \frac{f(y_2)}{f(y_1)}.
\end{align*}
Thus, to prove (\ref{eq:L5a}), it suffices to show that
\begin{equation}
\lim_{\epsilon \to 0^+} \left|\frac{v((y_2, y_2+\epsilon))}{v((y_1, y_1+\epsilon))} - \frac{u(a, y_2)}{u(a, y_1)}\right| = 0. \label{eq:L5b}
\end{equation}
Next, inheriting the notation in (\ref{eq:L4a}),  since $v_n \overset{d}{\longrightarrow} v$ and $F_v(y)$ is continuous, we have
\begin{equation}
\lim_{n \to \infty} \left| \frac{v_n((y_2, y_2+\epsilon))}{v_n((y_1, y_1+\epsilon))} - \frac{u(a, y_2)}{u(a, y_1)} \right|
= \left| \frac{v((y_2, y_2+\epsilon))}{v((y_1, y_1+\epsilon))} - \frac{u(a, y_2)}{u(a, y_1)} \right|. \label{eq:L5c}
\end{equation}
Since $\{v_n\}$ is a subsequence of $\textstyle\Big\{ \n \Big( \frac{\pi(a_n)}{n} \in (\cdot) \Big) \Big\}$, by Lemma~\ref{L7}, (\ref{eq:L5b}) follows from (\ref{eq:L5c}).
\end{proof}

\begin{proof}[Proof of Lemma \ref{M1}]
Since the support of $\n \Big( \frac{\pi(a_n)}{n} \in (\cdot) \Big)$ is within $[0, 1]$, the sequence
$\left\{ \n \Big( \frac{\pi(a_n)}{n} \in (\cdot) \Big) \right\}$ is tight.
Therefore, Lemma \ref{M1} follows from Lemma~\ref{L4}, Lemma~\ref{L5} and Theorem~\ref{thm:AshDade}.
\end{proof}

\subsection{Proof of Lemma \ref{L18} and Lemma \ref{L13}}\label{section5}
We now complete the proofs of Lemma \ref{L18} and Lemma \ref{L13}.
\begin{definition}
For any $\pi \in S_n$ and any $ 1 \le j < k \le n$, let $\pi([j, k])$ denote the vector $\left( \pi(j), \pi(j+1), \cdots, \pi(k) \right)$.
Let $\pi_{[j, k]}$ denote the permutation in $S_{k-j+1}$ induced by $\pi([j, k])$, i.\,e.
\[
\pi_{[j, k]}(i) = \sum_{s = j}^{k} \mathds{1}_{\{\pi(s) \le \pi(j+i-1) \}}, \qquad  \forall i \in [k-j+1].
\]
\end{definition}

We make use of the following property of the Mallows distribution (see e.g. Lemma 2.5 and Lemma 2.6 in \cite{Naya}):
\begin{proposition}\label{P2}
Given $\pi \sim \m$, for any $1 < k < n$, we have $\pi_{[1, k]} \sim \mu_{k, q}$, $\pi_{[k+1, n]} \sim \mu_{n-k, q}$ and $\pi_{[1, k]}, \pi_{[k+1, n]}$ are independent.
\end{proposition}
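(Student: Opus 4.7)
The plan is to exhibit an explicit bijective decomposition of $S_n$ that makes the Mallows density factor cleanly. For each $\pi \in S_n$, form the triple $(S, \sigma, \tau)$ where $S \coloneqq \{\pi(1), \ldots, \pi(k)\} \subset [n]$, $\sigma \coloneqq \pi_{[1, k]} \in S_k$, and $\tau \coloneqq \pi_{[k+1, n]} \in S_{n-k}$. This map is a bijection onto $\{(S, \sigma, \tau) : S \subset [n],\ |S| = k\} \times S_k \times S_{n-k}$, because the triple determines $\pi$ by placing the $\sigma(i)$-th smallest element of $S$ at position $i$ for $i \le k$, and the $\tau(j)$-th smallest element of $[n]\setminus S$ at position $k+j$ for $j \le n-k$.

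The key step is to establish the inversion decomposition
\[
\inv(\pi) = \inv(\sigma) + \inv(\tau) + C(S), \qquad C(S) \coloneqq |\{(s, t) : s \in S,\ t \in [n]\setminus S,\ s > t\}|.
\]
Split $\IV(\pi)$ into three classes according to whether both indices lie in $[1, k]$, both lie in $[k+1, n]$, or the indices straddle $k$. The first two classes give $\inv(\sigma)$ and $\inv(\tau)$ respectively, by the definition of the induced sub-permutation. For the cross class, note that each element of $S$ equals $\pi(i)$ for a unique $i \le k$ and each element of $[n] \setminus S$ equals $\pi(j)$ for a unique $j > k$; hence the number of cross inversions equals the number of pairs $(s, t) \in S \times ([n]\setminus S)$ with $s > t$, which depends only on $S$ and not on $\sigma$ or $\tau$.

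Substituting into the Mallows density gives
\[
\m(\pi) = \frac{1}{Z_{n, q}}\, q^{\inv(\sigma)}\, q^{\inv(\tau)}\, q^{C(S)},
\]
a product of factors in the three coordinates of the triple. Summing over all $S \subset [n]$ with $|S| = k$ yields a joint density $\p(\sigma, \tau) \propto q^{\inv(\sigma)}\, q^{\inv(\tau)}$, which simultaneously shows that $\sigma$ and $\tau$ are independent, that $\sigma \sim \mu_{k, q}$, and that $\tau \sim \mu_{n-k, q}$ (the normalizing constants being forced to be $Z_{k, q}$ and $Z_{n-k, q}$).

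The only nontrivial point is the claim that the cross-inversion count depends only on $S$; this is essentially a change of summation variable in the indicator sum $\sum_{i \le k < j} \mathds{1}_{\pi(i) > \pi(j)}$, rewriting each indicator as $\mathds{1}_{s > t}$ indexed by the unique pair $(s, t) = (\pi(i), \pi(j))$ in $S \times ([n]\setminus S)$. Once this observation is in hand, the proof reduces to bookkeeping, so I do not expect any real obstacle.
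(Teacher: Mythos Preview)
Your argument is correct and is in fact the standard proof of this factorization property. The paper itself does not supply a proof of Proposition~\ref{P2}; it simply cites \cite{Naya} (Lemma~2.5 and Lemma~2.6 there) and moves on. So there is nothing to compare against, and your write-up is strictly more self-contained than the paper on this point.

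One cosmetic remark: the codomain of your bijection is mis-typeset. You wrote
\[
\{(S, \sigma, \tau) : S \subset [n],\ |S| = k\} \times S_k \times S_{n-k},
\]
which is a product of a set of triples with two further factors; what you mean is
\[
\{S \subset [n] : |S| = k\} \times S_k \times S_{n-k}.
\]
Everything else --- the inversion split $\inv(\pi) = \inv(\sigma) + \inv(\tau) + C(S)$, the observation that $C(S)$ depends only on the image set $S$ and not on the internal orderings, and the resulting product form of the Mallows density --- is exactly right and needs no further justification.
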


%%%%%%%%%%%%%%%%%%%%%%%%%%%%%%%%%%%%% Lemma 11 %%%%%%%%%%%%%%%%%%%%%%%%%%%%%%%%%%%%%

\begin{lemma}\label{L11}
For any $0 \le a < b \le 1$ and $y \in [0, 1]$, we have the following identity
\[
\int_{0}^{y} u(a, t, \beta)\,dt = \int_{0}^{y'} u\left(\frac{a}{b}, t, b \beta\right)\,dt, \quad \forall \beta \in \mathbb{R}.
\]
Here,
\[
y' \coloneqq \frac{1}{b}\,u_{\beta}([0, b]\times[0, y]) = \frac{1}{b} \int_{0}^{b}\int_{0}^{y} u(x, t, \beta)\, dtdx
\]
and $u(x, y, \beta)$ is defined in (\ref{eq:t1}).
\end{lemma}

We make some preparation before proving Lemma \ref{L11}.
Given $a, b \in [0, 1]$, choose two sequences $\{a_n\}$ and $\{b_n\}$ such that $a_n \in [n]$, $b_n \in [n]$ and
\[
\lim_{n \to \infty} \frac{a_n}{n} = a, \qquad \lim_{n \to \infty} \frac{b_n}{n} = b.
\]
Moreover, for any $\beta \in \mathbb{R}$, choose a sequence $\{q_n\}$ with $q_n >0$ such that
$\lim_{n \to \infty} n (1-q_n) = \beta$.
By Lemma \ref{M1}, we have
\begin{equation}
\lim_{n \to \infty} \textstyle\n \left( \frac{\pi(a_n)}{n} \le y \right) = \int_{0}^{y} u(a, t, \beta)\,dt. \label{eq:L11aa}
\end{equation}
We will show that
\begin{equation}
\lim_{n \to \infty} \textstyle\n \left( \frac{\pi(a_n)}{n} \le y \right) = \int_{0}^{y'} u\left(\frac{a}{b}, t, b \beta\right)\,dt. \label{eq:L11a}
\end{equation}
Lemma \ref{L11} follows from (\ref{eq:L11aa}) and (\ref{eq:L11a}). First, regarding $\{a_n\}$ and $\{b_n\}$ as fixed sequences, $y$ as a fixed number, we make the following definitions,
\[
R_0 \coloneqq [0, b] \times [0, y], \qquad R_1 \coloneqq [b, 1] \times [0, y],
\]
\[
K_n  \coloneqq \{ ( v_1, v_2, \cdots, v_{n - b_n +1} ): v_i \in [n] \text{ and } i \neq j \Rightarrow v_i \neq v_j \},
\]
\[
f_n(v) \coloneqq |\{ v_i \in v : v_i \le n y \} | \quad \text{ for } v \in K_n,
\]
\[
G_n(\lambda) \coloneqq \left\{ v \in K_n: \left| \frac{1}{n}f_n(v) - u_{\beta}(R_1) \right| < \lambda \right\}.
\]
Here $K_n$ consists of all possible values $\pi([b_n, n])$ can take when $\pi \in S_n$.  $f_n(\pi([b_n, n]))$ denotes the number of
points $\left( \frac{i}{n}, \frac{\pi(i)}{n} \right)$ inside the rectangle $[\frac{b_n}{n}, 1] \times [0, y]$.

Next we show that, for any $\lambda > 0 $,
\begin{equation}
\lim_{n \to \infty} \n \left( \pi([b_n, n]) \notin G_n(\lambda) \right) = 0 \label{eq:L11b}
\end{equation}

\begin{proof}[Proof of (\ref{eq:L11b})]
First, since the difference between $[\frac{b_n}{n}, 1] \times [0, y]$ and $R_1$ is a rectangle with width $\left| \frac{b_n}{n} - b \right|$, it follows that
\begin{align*}
  &\left| f_n(\pi([b_n, n])) - n \LL(R_1) \right|\\
 =& \textstyle\left| \big|\{ i : \big( \frac{i}{n}, \frac{\pi(i)}{n} \big) \in [\frac{b_n}{n}, 1] \times [0, y] \} \big| -
    \big| \{ i : \big( \frac{i}{n}, \frac{\pi(i)}{n} \big) \in R_1 \} \big|\right|\\
 \le&\, |b_n - n b| +1.
\end{align*}
Thus, for any $\lambda > 0$, there exists a $N >0$ such that for all $n > N$,
\[
\textstyle\left| \frac{1}{n} f_n(\pi([b_n, n])) - \LL(R_1) \right| \le \left| \frac{b_n}{n} - b \right| + \frac{1}{n} < \frac{\lambda}{2}.
\]
Here we use the fact that $\lim_{n \to \infty} \frac{b_n}{n} = b$.
Hence, for any $n > N$, we have
\begin{align*}
  & \left|\frac{1}{n} f_n(\pi([b_n, n])) - u_{\beta}(R_1) \right| \ge \lambda\\
 \Rightarrow & \left| \frac{1}{n} f_n(\pi([b_n, n])) - \LL(R_1) \right| + \left| \LL(R_1) - u_{\beta}(R_1) \right| \ge \lambda\\
 \Rightarrow & \left| \LL(R_1) - u_{\beta}(R_1) \right| > \frac{\lambda}{2}.
\end{align*}
Thus,
\begin{align*}
 & \, \n \left( \pi([b_n, n]) \notin G_n(\lambda) \right) \\
=& \, \n \left( \left|\frac{1}{n} f_n(\pi([b_n, n])) - u_{\beta}(R_1) \right| \ge \lambda \right) \\
\le& \, \n \left( \left| \LL(R_1) - u_{\beta}(R_1) \right| > \frac{\lambda}{2} \right).
\end{align*}
(\ref{eq:L11b}) follows from the above inequality and Lemma \ref{L1}.
\end{proof}

Next we show that, for any $\epsilon > 0$, we can choose a sufficiently small $\lambda$ and $N >0$ such that for all $ n > N $ and any
$v \in G_n(\lambda)$,
\begin{equation}
\textstyle\left|\, \n \left( \frac{\pi(a_n)}{n} \le y \ \middle|\  \pi([b_n, n]) = v \right) -
\int_{0}^{y'} u\left(\frac{a}{b}, t, b \beta\right)dt \,\right| < \frac{\epsilon}{3}. \label{eq:L11c}
\end{equation}

\begin{proof}[Proof of (\ref{eq:L11c})]
Assume $n$ is sufficiently large such that $a_n < b_n$. For any $v \in G_n(\lambda)$, here the value of $\lambda$ is to be determined, we have
\begin{align}
& \textstyle\n \left( \frac{\pi(a_n)}{n} \le y \ \middle|\  \pi([b_n, n]) = v \right) \label{eq:L11d}\\
 =\ & \n \left( \pi(a_n) \le n y \ \middle|\ \pi([b_n, n]) = v \right) \nonumber\\
 =\ & \n \left( \pi_{[1, b_n -1]} (a_n) \le \lfloor n y \rfloor - f_n(v) \ \middle|\ \pi([b_n, n]) = v \right) \nonumber\\
 =\ & \mu_{b_n -1, q_n} \left( \tau(a_n) \le \lfloor n y \rfloor - f_n(v) \right) \nonumber\\
 =\ & \mu_{b_n -1, q_n} \textstyle\left( \frac{\tau(a_n)}{b_n -1} \le \frac{1}{b_n -1} \left( \lfloor n y \rfloor - f_n(v) \right) \right) \nonumber\\
 =\ & \mu_{b_n -1, q_n} \textstyle\left( \frac{\tau(a_n)}{b_n -1} \le
  \frac{n}{b_n -1} \left( \frac{\lfloor n y \rfloor}{n} - \frac{f_n(v)}{n} \right) \right). \nonumber
\end{align}
The second equality follows since, conditioned on $\pi([b_n, n]) = v$, we have
\[
\{ \pi \in S_n : \pi(a_n) \le n y \}
= \{ \pi \in S_n : \pi_{[1, b_n -1]} (a_n) \le \lfloor n y \rfloor - f_n(v) \}.
\]
Note that $\lfloor n y \rfloor - f_n(v)$ is the number of $i \le n y$ which is not in $v$.
The third equality is due to Proposition \ref{P2} with $\tau \sim \mu_{b_n -1, q_n}$. Next, by the following facts,
\begin{equation}\label{eq:L11D}
\lim_{n \to \infty} (b_n -1)(1-q_n) = \lim_{n \to \infty}\textstyle\frac{b_n -1}{n}\cdot\lim_{n \to \infty}n (1-q_n) = b \beta,
\end{equation}
\vspace{-4mm}
\begin{equation}
\lim_{n \to \infty} \frac{a_n}{b_n-1} = \lim_{n \to \infty} \frac{a_n}{n}\cdot\lim_{n \to \infty}\frac{n}{b_n -1} = \frac{a}{b},
\end{equation}
\begin{equation}\label{eq:L11e}
\lim_{n \to \infty} \frac{n}{b_n -1} \left( \frac{\lfloor n y \rfloor}{n} - u_{\beta}(R_1) \right)
= \frac{1}{b}(y - u_{\beta}(R_1)) = \frac{1}{b}\, u_{\beta}(R_0) = y', 
\end{equation}
and Lemma \ref{M1}, we have
\[
\lim_{n \to \infty} \textstyle\mu_{b_n -1, q_n} \left( \frac{\tau(a_n)}{b_n -1} \le \frac{n}{b_n -1} \left( \frac{\lfloor n y \rfloor}{n} - u_{\beta}(R_1) \right) \right)
= \int_{0}^{y'} u\left(\frac{a}{b}, t, b \beta\right)dt.
\]
Hence, there exists $N_1 >0$ such that for any $n > N_1$,
\begin{equation}
\textstyle\left|\mu_{b_n -1, q_n} \left( \frac{\tau(a_n)}{b_n -1} \le \frac{n}{b_n -1} \left( \frac{\lfloor n y \rfloor}{n} - u_{\beta}(R_1) \right) \right) - \int_{0}^{y'} u\left(\frac{a}{b}, t, b \beta\right)dt \right| < \frac{\epsilon}{6}. \label{eq:L11e0}
\end{equation}
By (\ref{eq:L11e}), there exists $N_2 > 0$ such that for all $n > N_2$,
\begin{equation}
\textstyle\frac{n}{b_n -1} < \frac{2}{b} \quad
 \text{and} \quad \left| \frac{n}{b_n -1} \left( \frac{\lfloor n y \rfloor}{n} - u_{\beta}(R_1) \right) - y' \right| < \lambda. \label{eq:L11e1}
\end{equation}
Hence, for any $n > N_2$ and any $v \in G_n(\lambda)$, we have
\begin{align}
  & \textstyle\left| \frac{n}{b_n -1} \left( \frac{\lfloor n y \rfloor}{n} - \frac{f_n(v)}{n} \right) - y' \right| \label{eq:L11e2}\\
 \le\,& \textstyle\left| \frac{n}{b_n -1} \left( \frac{\lfloor n y \rfloor}{n} - \frac{f_n(v)}{n} \right) -
        \frac{n}{b_n -1} \left( \frac{\lfloor n y \rfloor}{n} - u_{\beta}(R_1) \right) \right| \nonumber\\
    & \qquad\qquad\qquad\qquad\qquad\qquad + \textstyle\left| \frac{n}{b_n -1} \left( \frac{\lfloor n y \rfloor}{n} - u_{\beta}(R_1) \right) - y' \right| \nonumber\\
 < & \textstyle\left(\frac{2}{b} + 1\right) \lambda. \nonumber
\end{align}
Let $C \coloneqq \frac{2}{b} + 1$. Since, by (\ref{eq:L11e1}) and (\ref{eq:L11e2}), both $\frac{n}{b_n -1} \left( \frac{\lfloor n y \rfloor}{n} - \frac{f_n(v)}{n} \right)$ and \newline $\frac{n}{b_n -1} \left( \frac{\lfloor n y \rfloor}{n} - u_{\beta}(R_1) \right)$ are within the interval $\left(y' - C\lambda,\, y' + C\lambda \right)$, it follows that, for any $n > N_2$ and any $v \in G_n(\lambda)$,
\begin{align}
  &\textstyle\Big| \mu_{b_n -1, q_n} \left( \frac{\tau(a_n)}{b_n -1} \le
  \frac{n}{b_n -1} \left( \frac{\lfloor n y \rfloor}{n} - \frac{f_n(v)}{n} \right) \right)\label{eq:L11e3}\\
   &\qquad\qquad\qquad \textstyle -\mu_{b_n -1, q_n} \left( \frac{\tau(a_n)}{b_n -1} \le \frac{n}{b_n -1} \left( \frac{\lfloor n y \rfloor}{n} - u_{\beta}(R_1) \right) \right) \Big| \nonumber\\
  <\, &\textstyle\mu_{b_n -1, q_n} \left( \frac{\tau(a_n)}{b_n -1} \in \left(y' - C\lambda,\, y' + C\lambda \right) \right). \nonumber
\end{align}
By (\ref{eq:L11D}) and Lemma \ref{L3}, there exists $N_3 > 0$ such that for all $n > N_3$,
\begin{equation}
\mu_{b_n -1, q_n} \left( \frac{\tau(a_n)}{b_n -1} \in \left(y' - C\lambda,\, y' + C\lambda \right) \right)
 < 4C\lambda e^{b |\beta|}. \label{eq:L11e4}
\end{equation}
Therefore, we can fix $\lambda = \frac{\epsilon}{24C} e^{- b |\beta|}$ in the first place.
Then, by (\ref{eq:L11e0}), (\ref{eq:L11e3}) and (\ref{eq:L11e4}), for any $n > \max{(N_1, N_2, N_3)}$ and any $v \in G_n(\lambda)$,
\begin{align}
  &\textstyle\Big|\mu_{b_n -1, q_n} \left( \frac{\tau(a_n)}{b_n -1} \le
  \frac{n}{b_n -1} \left( \frac{\lfloor n y \rfloor}{n} - \frac{f_n(v)}{n} \right) \right) -
    \int_{0}^{y'} u\left(\frac{a}{b}, t, b \beta\right)dt \,\Big| \label{eq:L11z} \\
   <\ & \frac{\epsilon}{6} + \frac{\epsilon}{6} = \frac{\epsilon}{3}. \nonumber
\end{align}
(\ref{eq:L11c}) follows by (\ref{eq:L11d}) and (\ref{eq:L11z}).
\end{proof}
We are in the position to show (\ref{eq:L11a}), which completes the proof of Lemma~\ref{L11}.
\begin{proof}[Proof of (\ref{eq:L11a})]
For simplicity, let $I \coloneqq \int_{0}^{y'} u\left(\frac{a}{b}, t, b \beta\right)dt$. Since
\[
y' = \frac{1}{b} u_{\beta}(R_0) \le \frac{1}{b} u_{\beta}([0 , b]\times[0, 1]) = 1,
\]
we have
\[
I = \int_{0}^{y'} u\left(\frac{a}{b}, t, b \beta\right)dt \le \int_{0}^{1} u\left(\frac{a}{b}, t, b \beta\right)dt =1.
\]
Then, given $\epsilon >0$, fix the value of $\lambda$ such that (\ref{eq:L11c}) holds for any $n > N_1$ and any $v \in G_n(\lambda)$.
By (\ref{eq:L11b}), there exists $N_2>0$ such that for any $n > N_2$,
\begin{equation}
\n \left( \pi([b_n, n]) \notin G_n(\lambda) \right) < \frac{\epsilon}{3}. \label{eq:L11e5}
\end{equation}
Then, for any $n > \max{(N_1, N_2)}$,
\begin{align*}
  &\textstyle \left| \n \left( \frac{\pi(a_n)}{n} \le y \right) - I \right|\\
 =&\, \textstyle \Big|\sum_{v \in K_n} \n \left( \frac{\pi(a_n)}{n} \le y \ \big|\  \pi([b_n, n]) = v \right)\cdot \n \left( \pi([b_n, n]) = v \right)\\
  &\qquad\qquad\qquad\qquad\qquad\qquad\qquad\quad - \textstyle\sum_{v \in K_n} I \cdot \n \left( \pi([b_n, n]) = v \right)  \, \Big|\\
 \le& \textstyle\sum_{v \in G_n(\lambda)}\Big|\n \left( \frac{\pi(a_n)}{n} \le y \ \big|\  \pi([b_n, n]) = v \right) - I\, \Big| \cdot
   \n \left( \pi([b_n, n]) = v \right) \\
  &\qquad\textstyle  + \sum_{v \notin G_n(\lambda)} \n \left( \frac{\pi(a_n)}{n} \le y \ \big|\  \pi([b_n, n]) = v \right)\cdot
   \n \left( \pi([b_n, n]) = v \right)\\
  &\qquad \textstyle + \sum_{v \notin G_n(\lambda)} I \cdot \n \left( \pi([b_n, n]) = v \right)\\
 \le&\ \textstyle \frac{\epsilon}{3}\cdot \sum_{v \in G_n(\lambda)} \n \left( \pi([b_n, n]) = v \right) +
   2 \cdot \sum_{v \notin G_n(\lambda)} \n \left( \pi([b_n, n]) = v \right)\\
 <&\ \textstyle \frac{\epsilon}{3} + \frac{2\epsilon}{3} = \epsilon.
\end{align*}
Here we use (\ref{eq:L11c}) and (\ref{eq:L11e5}) in the second to last inequality.
\end{proof}

%%%%%%%%%%%%%%%%%%%%%%%%%% Lemma 12 %%%%%%%%%%%%%%%%%%%%%%%%%%%%%%%%%%%%%%%%%

\begin{lemma}\label{L12}
For any $0 \le a < b \le 1$ and any $\beta \in \mathbb{R}$, suppose we have sequences $\{a_n\}$, $\{b_n\}$ and $\{q_n\}$ such that
$a_n \in [n]$, $b_n \in [n]$, $q_n > 0$ and
\[
\lim_{n \to \infty} \frac{a_n}{n} = a, \qquad \lim_{n \to \infty} \frac{b_n}{n} = b, \qquad \lim_{n \to \infty} n (1 - q_n) = \beta.
\]
Then, for any $A = [y_1, y_2] \subset [0, 1]$ and $B = [y_3, y_4] \subset [0, 1]$,
\[
\textstyle\lim\limits_{n \to \infty}\n\big( \mathds{1}_A(\frac{\pi(a_n)}{n})\mathds{1}_B(\frac{\pi(b_n)}{n})\big) -
 \n \big(\mathds{1}_A(\frac{\pi(a_n)}{n})\big)\n\big(\mathds{1}_B(\frac{\pi(b_n)}{n})\big) = 0.
\]
\end{lemma}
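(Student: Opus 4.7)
The plan is to condition on $v = \pi([b_n, n])$ and exploit Proposition \ref{P2} in the same manner as the proof of Lemma \ref{L11}. Since $a < b$, for all large $n$ we have $a_n < b_n$, and conditioned on $\pi([b_n, n]) = v$, the permutation $\pi_{[1, b_n - 1]}$ is distributed as $\mu_{b_n - 1, q_n}$ and is independent of $v$. By Theorem \ref{M1},
\[
\n(\mathds{1}_A(\pi(a_n)/n)) \to I_A \coloneqq \int_A u(a, y, \beta)\, dy \quad \text{and} \quad \n(\mathds{1}_B(\pi(b_n)/n)) \to I_B \coloneqq \int_B u(b, y, \beta)\, dy,
\]
so it suffices to show that the joint expectation satisfies $\n(\mathds{1}_A(\pi(a_n)/n)\mathds{1}_B(\pi(b_n)/n)) \to I_A I_B$.

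Write $A = [y_1, y_2]$, let $f_n^y(v) \coloneqq |\{v_i \in v : v_i \le ny\}|$, and generalize the set $G_n(\lambda)$ from the proof of Lemma \ref{L11} to
\[
G_n(\lambda) \coloneqq \{v \in K_n : |f_n^{y_j}(v)/n - u_\beta([b, 1] \times [0, y_j])| < \lambda \text{ for } j = 1, 2\}.
\]
By Lemma \ref{L1}, $\n(\pi([b_n, n]) \notin G_n(\lambda)) \to 0$ for every $\lambda > 0$. For $v \in G_n(\lambda)$, conditioning on $\pi([b_n, n]) = v$ the event $\{\pi(a_n)/n \in A\}$ becomes $\pi_{[1, b_n - 1]}(a_n) \in [\lceil ny_1 \rceil - f_n^{y_1}(v),\, \lfloor ny_2 \rfloor - f_n^{y_2}(v)]$. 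Dividing by $b_n - 1$, the endpoints of the corresponding interval for $\tau(a_n)/(b_n - 1)$ with $\tau \sim \mu_{b_n - 1, q_n}$ lie within $O(\lambda)$ of $y_j' \coloneqq u_\beta([0,b] \times [0, y_j])/b$. Combining Theorem \ref{M1} applied to $\mu_{b_n - 1, q_n}$ (using $(b_n - 1)(1 - q_n) \to b\beta$ and $a_n/(b_n - 1) \to a/b$), Lemma \ref{L11} (which identifies $\int_0^{y_j'} u(a/b, t, b\beta)\, dt$ with $\int_0^{y_j} u(a, t, \beta)\, dt$), and Lemma \ref{L3} (to bound the mass of $\tau(a_n)/(b_n-1)$ in the $O(\lambda)$-thin gap between the conditional interval and $[y_1', y_2']$), we conclude that for every $\epsilon > 0$ there exists $\lambda > 0$ such that, for all sufficiently large $n$ and all $v \in G_n(\lambda)$,
\[
|\n(\pi(a_n)/n \in A \mid \pi([b_n, n]) = v) - I_A| < \epsilon.
\]

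Decomposing
\[
\n(\mathds{1}_A(\pi(a_n)/n)\mathds{1}_B(\pi(b_n)/n)) = \sum_{v : v_1/n \in B} \n(\pi(a_n)/n \in A \mid \pi([b_n, n]) = v)\, \n(\pi([b_n, n]) = v)
\]
and splitting the sum over $v \in G_n(\lambda)$ versus $v \notin G_n(\lambda)$, the atypical part is $o(1)$, and the typical part equals $(I_A + O(\epsilon)) \cdot \n(\pi(b_n)/n \in B,\, \pi([b_n, n]) \in G_n(\lambda))$, which tends to $I_A I_B$ by Theorem \ref{M1}. Since $\epsilon$ is arbitrary, the joint converges to $I_A I_B$, which coincides with the limit of the product of marginals; this proves the lemma. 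The main obstacle is establishing the uniform conditional estimate above: the argument is routine but requires tracking both endpoints of $A$ simultaneously through the Riemann-sum reduction underlying Lemma \ref{L11}, rather than the single endpoint handled there.
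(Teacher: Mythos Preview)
Your proposal is correct and follows essentially the same approach as the paper: condition on $\pi([b_n,n])=v$, use Proposition~\ref{P2} to decouple, apply the uniform conditional estimate on the good set $G_n(\lambda)$, and split the sum into typical and atypical parts. The only difference is that the paper first reduces $A=[y_1,y_2]$ to $A=[0,y]$ by linearity (and the observation $\n(\pi(a_n)/n = y)\to 0$), which lets it quote the single-endpoint estimate \eqref{eq:L11c} verbatim from the proof of Lemma~\ref{L11}; you instead enlarge $G_n(\lambda)$ to control both endpoints simultaneously, redoing that estimate for each. Both routes work; the paper's reduction is slightly cleaner since it avoids rewriting the argument of \eqref{eq:L11c}.
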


\begin{proof}
The proof is similar to the proof of Lemma \ref{L11}, and we inherit those definitions in the previous proof.
First of all, since
\begin{align*}
  &\textstyle\n\big( \mathds{1}_A(\frac{\pi(a_n)}{n})\mathds{1}_B(\frac{\pi(b_n)}{n})\big)\\
  =\, &\textstyle\n\big( \mathds{1}_{[0, y_2]}(\frac{\pi(a_n)}{n})\mathds{1}_B(\frac{\pi(b_n)}{n})\big) -
       \n\big( \mathds{1}_{[0, y_1)}(\frac{\pi(a_n)}{n})\mathds{1}_B(\frac{\pi(b_n)}{n})\big)
\end{align*}
and
\begin{align*}
   \textstyle\n \big(\mathds{1}_A(\frac{\pi(a_n)}{n})\big) =\, \n \big(\mathds{1}_{[0, y_2]}(\frac{\pi(a_n)}{n})\big) -
       \n \big(\mathds{1}_{[0, y_1)}(\frac{\pi(a_n)}{n})\big),
\end{align*}
it suffices to show the cases when the interval $A$ is of the form $[0, y]$ or $[0, y)$ for any $y~\in~[0, 1]$.
Moreover, by Lemma \ref{M1}, we have
\[
\textstyle\lim\limits_{n \to \infty} \n \left(\frac{\pi(a_n)}{n} = y \right) = 0, \quad \forall y \in [0, 1].
\]
Hence, it suffices to show the case when $A = [0, y]$, for any $y \in [0, 1]$.

By Lemma \ref{L11}, define
\[
I \coloneqq \int_{0}^{y} u(a, t, \beta)\,dt = \int_{0}^{y'} u\left(\frac{a}{b}, t, b \beta\right)dt.
\]
By Lemma \ref{M1}, we have
\[
\textstyle\lim\limits_{n \to \infty} \n \big(\mathds{1}_A(\frac{\pi(a_n)}{n})\big) = \int_{0}^{y} u(a, t, \beta)\,dt = I.
\]
Hence it suffices to show the following,
\begin{equation}
\textstyle\lim\limits_{n \to \infty}\n\big( \mathds{1}(\frac{\pi(a_n)}{n} \le y)\mathds{1}_B(\frac{\pi(b_n)}{n})\big) -
 \n\big(\mathds{1}_B(\frac{\pi(b_n)}{n})\big)\cdot I = 0,   \label{eq:L12a}
\end{equation}
for any $y \in [0, 1]$.
Given $\epsilon > 0$, by (\ref{eq:L11c}), there exists $\lambda > 0$ and $N_1 > 0$ such that for any $n > N_1$ and any $v \in G_n(\lambda)$,
\begin{equation}
\textstyle\left|\, \n \left( \frac{\pi(a_n)}{n} \le y \ \middle|\  \pi([b_n, n]) \,{=}\, v \right) -
I \,\right| < \displaystyle\frac{\epsilon}{3} . \label{eq:L12b}
\end{equation}
By (\ref{eq:L11b}), there exists $N_2>0$ such that for any $n > N_2$,
\begin{equation}
\n \left( \pi([b_n, n]) \notin G_n(\lambda) \right) < \frac{\epsilon}{3}. \label{eq:L12c}
\end{equation}
Moreover, by conditioning on the value of $\pi([b_n, n])$, we have
\begin{align*}
   &\ \textstyle\n\left( \mathds{1}(\frac{\pi(a_n)}{n} \le y)\mathds{1}_B(\frac{\pi(b_n)}{n})\right)\\
 {=} &\textstyle\sum\limits_{v \in K_n}\textstyle \n\left( \mathds{1}(\frac{\pi(a_n)}{n} \le y)\mathds{1}_B(\frac{\pi(b_n)}{n}) \, \big|\,  \pi([b_n, n]) \,{=}\, v \right) \cdot
     \n \left( \pi([b_n, n]) \,{=}\, v \right)\\
 {=} &\textstyle\sum\limits_{v \in K_n} \textstyle\n\left( \mathds{1}(\frac{\pi(a_n)}{n} \le y) \,\big|\, \pi([b_n, n]) \,{=}\, v \right)\cdot
      \mathds{1}_B(\frac{v_1}{n}) \cdot \n \left( \pi([b_n, n]) \,{=}\, v \right)\\
 {=} &\textstyle\sum\limits_{v \in G_n(\lambda)} \textstyle\n\left( \mathds{1}(\frac{\pi(a_n)}{n} \le y) \, \big|\, \pi([b_n, n]) \,{=}\, v \right)\cdot
      \mathds{1}_B(\frac{v_1}{n}) \cdot \n \left( \pi([b_n, n]) \,{=}\, v \right)\\
  + &\textstyle\sum\limits_{v \notin G_n(\lambda)} \textstyle\n\left( \mathds{1}(\frac{\pi(a_n)}{n} \le y) \,\big|\, \pi([b_n, n]) \,{=}\, v \right)\cdot
      \mathds{1}_B(\frac{v_1}{n}) \cdot \n \left( \pi([b_n, n]) \,{=}\, v \right)\\
  \text{and}&\\
   &\ \textstyle\n\left(\mathds{1}_B(\frac{\pi(b_n)}{n})\right)\\
 {=}& \textstyle\sum\limits_{v \in K_n} \textstyle\n\left(\mathds{1}_B(\frac{\pi(b_n)}{n})\,\big|\, \pi([b_n, n]) \,{=}\, v \right)\cdot \n \left( \pi([b_n, n]) \,{=}\, v \right)\\
 {=}& \textstyle\sum\limits_{v \in K_n} \textstyle\mathds{1}_B(\frac{v_1}{n}) \cdot \n \left( \pi([b_n, n]) \,{=}\, v \right)\\
 {=}&\textstyle\sum\limits_{v \in G_n(\lambda)} \textstyle\mathds{1}_B(\frac{v_1}{n}) \cdot \n \left( \pi([b_n, n]) \,{=}\, v \right)\\
  &\textstyle\qquad\qquad\qquad\qquad\qquad\qquad + \sum\limits_{v \notin G_n(\lambda)} \textstyle\mathds{1}_B(\frac{v_1}{n}) \cdot \n \left( \pi([b_n, n]) \,{=}\, v \right).
\end{align*}
Here $v_1$ denotes the first entry of vector $v$. Hence, for any $n > \max{(N_1, N_2)}$, we have
\begin{align*}
  &\,\textstyle\left|\,\n\left( \mathds{1}(\frac{\pi(a_n)}{n} \le y)\mathds{1}_B(\frac{\pi(b_n)}{n})\right)
    - \n\left(\mathds{1}_B(\frac{\pi(b_n)}{n})\right)\cdot I \, \right|\\
  \le& \textstyle\sum\limits_{v \in G_n(\lambda)}\textstyle\left|\, \n\left( \mathds{1}(\frac{\pi(a_n)}{n} \le y) \, \big|\,  \pi([b_n, n]) \,{=}\, v \right) - I \, \right|
     \cdot \n \left( \pi([b_n, n]) \,{=}\, v \right)\\
     &\qquad\qquad\qquad\qquad\qquad\qquad\qquad\qquad\quad\textstyle +  2\sum\limits_{v \notin G_n(\lambda)} \n \left( \pi([b_n, n])\,{=}\, v \right)\\
  \le&\  \textstyle\frac{\epsilon}{3} \sum\limits_{v \in G_n(\lambda)} \n \left( \pi([b_n, n]) \,{=}\, v \right)
     + 2\sum\limits_{v \notin G_n(\lambda)} \n \left( \pi([b_n, n]) \,{=}\, v \right)\\
  <&\  \frac{\epsilon}{3} + 2\cdot \frac{\epsilon}{3} = \epsilon.
\end{align*}
The first inequality follows from triangle inequality and the fact that,
\[
\textstyle\n\left( \mathds{1}(\frac{\pi(a_n)}{n} \le y) \,\Big|\, \pi([b_n, n]) \,{=}\, v \right) \le 1, \quad \mathds{1}_B(\frac{v_1}{n}) \le 1,
\text{ and } I \le 1.
\]
The last two inequalities follow from (\ref{eq:L12b}) and (\ref{eq:L12c}) respectively.

\end{proof}

%%%%%%%%%%%%%%%%%%%%%%%%%%%%%%%%%%%%%%%%%%%%%%%%%%%%%%%%%%%%%%%%%%%%%%%%%%%%%%%%%%

Before we start to prove Lemma \ref{L18} and Lemma \ref{L13}, we briefly introduce the following facts:
\begin{lemma}\label{L14}
For any $s, t, i \in [n]$,
\[
\min( q^d, q^{-d}) \le \frac{\m(\pi(s)=i)}{\m(\pi(t)=i)}\le \max( q^d, q^{-d}),
\]
where $d = |s-t|$.
\end{lemma}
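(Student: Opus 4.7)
The plan is to reduce Lemma \ref{L14} to Lemma \ref{L2} by invoking the inverse symmetry of the Mallows distribution (Proposition \ref{P3}). The key observation is that, as subsets of $S_n$, we have $\{\pi : \pi(s) = i\} = \{\pi : \pi^{-1}(i) = s\}$, so inversion swaps the roles of position and value. Lemma \ref{L2} bounds the ratio when position is fixed and values vary; Lemma \ref{L14} is the dual statement with value fixed and positions varying.

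First I would establish the marginal identity
\[
\m(\pi(s) = i) = \m(\pi(i) = s), \qquad \forall s, i \in [n].
\]
This follows since $\{\pi : \pi(s) = i\} = \{\pi : \pi^{-1}(i) = s\}$, and by Proposition \ref{P3} the map $\pi \mapsto \pi^{-1}$ preserves $\m$, so
\[
\m\bigl(\{\pi : \pi^{-1}(i) = s\}\bigr) = \m\bigl(\{\tau : \tau(i) = s\}\bigr).
\]
Applying this to both numerator and denominator in Lemma \ref{L14} yields
\[
\frac{\m(\pi(s) = i)}{\m(\pi(t) = i)} = \frac{\m(\pi(i) = s)}{\m(\pi(i) = t)}.
\]

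Since the integer $d = |s - t|$ is the same regardless of whether we interpret $s, t$ as positions or as values, Lemma \ref{L2} applied to the right-hand side immediately gives the desired bounds $\min(q^d, q^{-d})$ and $\max(q^d, q^{-d})$. There is no substantive technical obstacle: the entire content of Lemma \ref{L14} is that the one-step swap bound of Lemma \ref{L2} transfers across the position--value duality. The only thing to be mindful of is that $d = |s-t|$ refers to numerical distance of the integer labels, which is manifestly preserved under the swap.
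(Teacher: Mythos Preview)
Your proof is correct. The paper itself omits the argument, noting only that Lemma~\ref{L14} ``follows from similar argument as in the proof of Lemma~\ref{L2}''; the intended route is to repeat the transposition bijection of Lemma~\ref{L2} with right multiplication $\pi \mapsto \pi \ccirc (j,j{+}1)$ in place of left multiplication $\pi \mapsto (j,j{+}1)\ccirc \pi$, which again changes $\inv(\pi)$ by exactly one and gives the one-step ratio bound directly.

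Your approach is genuinely different: you invoke Proposition~\ref{P3} to transport the question back to Lemma~\ref{L2} via $\m(\pi(s)=i)=\m(\pi(i)=s)$. This is cleaner for Lemma~\ref{L14} because it avoids redoing any inversion-counting. The trade-off is that the paper's direct transposition argument extends without modification to Lemma~\ref{L15} (the two-point version with an additional constraint $\pi(w)=j$), whereas your inverse-symmetry reduction applied to Lemma~\ref{L15} would yield a ratio of the form $\m(\pi(i)=s,\,\pi(j)=w)/\m(\pi(i)=t,\,\pi(j)=w)$, i.e.\ a two-value analogue of Lemma~\ref{L2} that is not already on the shelf---so one would have to run the transposition argument anyway. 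For Lemma~\ref{L14} alone, though, your reduction is entirely adequate and arguably more elegant.
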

\begin{lemma}\label{L15}
For any $s, t, w, i, j \in [n]$ such that either $w < \min{(s, t)}$ or $w > \max{(s, t)}$,
\[
\min( q^d, q^{-d}) \le \frac{ \m \left( \{ \pi \in S_n : \pi(s)= i \text{ and } \pi(w) = j \}\right) }
    {\m \left( \{ \pi \in S_n : \pi(t)= i \text{ and } \pi(w) = j \}\right) } \le \max( q^d, q^{-d}),
\]
where $d = |s-t|$.
\end{lemma}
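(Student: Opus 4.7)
The plan is to mirror the proof of Lemma \ref{L2} essentially verbatim, but replacing the left multiplication $\pi \mapsto (j,j+1) \ccirc \pi$ (which swaps the \emph{values} $j$ and $j+1$) with the right multiplication $\pi \mapsto \pi \ccirc (s,s+1)$ (which swaps the \emph{positions} $s$ and $s+1$). Assume without loss of generality that $0 < q < 1$ and $s < t$, so $d = t - s$; the case $q > 1$ is symmetric under $q \leftrightarrow 1/q$, the uniform case $q = 1$ is trivial, and the statement is symmetric in $s$ and $t$. As in Lemma \ref{L2}, I would reduce to the base case $d = 1$ by induction, because the bound is multiplicative along the chain of adjacent pairs $(s,s+1), (s+1,s+2), \dots, (t-1,t)$; the hypothesis $w < \min(s,t)$ or $w > \max(s,t)$ guarantees that $w$ stays outside every intermediate interval, so the inductive step is always applicable.

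For the base case $d = 1$ (with $t = s+1$ and $w \notin \{s, s+1\}$), I would introduce the bijection $T'_s : S_n \to S_n$ defined by $T'_s(\pi) \coloneqq \pi \ccirc (s, s+1)$, which interchanges $\pi(s)$ and $\pi(s+1)$ and leaves the other values of $\pi$ fixed. Swapping two adjacent positions creates or destroys exactly one inversion, so $|\inv(T'_s(\pi)) - \inv(\pi)| = 1$ and hence
\[
q \le \frac{\m(T'_s(\pi))}{\m(\pi)} \le \frac{1}{q}
\]
for every $\pi \in S_n$. Because $w \notin \{s, s+1\}$, the value $\pi(w)$ is preserved by $T'_s$, so $T'_s$ restricts to a bijection between the level sets $A \coloneqq \{\pi \in S_n : \pi(s) = i,\ \pi(w) = j\}$ and $B \coloneqq \{\pi \in S_n : \pi(s+1) = i,\ \pi(w) = j\}$. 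Summing the pointwise ratio over $A$ exactly as in (\ref{eq:L2c}) yields
\[
\frac{\m(B)}{\m(A)} = \frac{\sum_{\pi \in A}\m(T'_s(\pi))}{\sum_{\pi \in A}\m(\pi)} \in [q, 1/q],
\]
which is the $d=1$ case.

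The only subtle point, and what I would call the main obstacle, is verifying that the hypothesis on $w$ is preserved along the inductive chain: this is precisely where the assumption $w \notin [\min(s,t),\max(s,t)]$ enters, and it is exactly what forces the adjacent-swap bijection at each stage to fix the value $\pi(w)$ and thus to map the appropriate level sets to each other. Once that observation is recorded, the rest is a direct transcription of the Lemma \ref{L2} argument with position-swaps in place of value-swaps, and the general bound $\min(q^d, q^{-d}) \le \m(A)/\m(B) \le \max(q^d, q^{-d})$ follows by $d$ applications of the base case.
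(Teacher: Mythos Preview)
Your proposal is correct and is exactly the ``similar argument'' the paper alludes to (the paper itself omits the proof, stating only that it follows as in Lemma~\ref{L2}). Replacing the value-swap $T_j(\pi)=(j,j+1)\ccirc\pi$ by the position-swap $T'_s(\pi)=\pi\ccirc(s,s+1)$, noting that $w\notin\{s,s+1\}$ makes $T'_s$ a bijection between the two constrained level sets, and then chaining the adjacent ratios along $s,s+1,\dots,t$ is precisely the intended adaptation; your observation that the hypothesis $w\notin[\min(s,t),\max(s,t)]$ is exactly what keeps $w$ outside every intermediate pair is the one point that needs to be said, and you said it.
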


These two lemmas follow from similar argument as in the proof of Lemma \ref{L2}. We omit their proofs. From these two lemmas, we can show the following,
\begin{lemma}\label{L16}
For any $A \subset [0, 1]$ and any $s, t \in [n]$,
\[
\textstyle\left|\m \left( \mathds{1}_A \big(\frac{\pi(s)}{n} \big) \right) -
     \m \left( \mathds{1}_A \big(\frac{\pi(t)}{n} \big)\right)\right| \le M,
\]
where $M = \max{(|1-q^d|, |1 - q^{-d}|)}$ and $d = |s-t|$.
\end{lemma}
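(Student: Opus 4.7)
The plan is to reduce the difference of expectations to a sum of point probabilities, then apply the pointwise ratio bound of Lemma \ref{L14} termwise.

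First I would set $B_A \coloneqq \{i \in [n] : i/n \in A\}$, so that by the definition of the indicator function,
\[
\m\left(\mathds{1}_A\left(\tfrac{\pi(s)}{n}\right)\right) = \sum_{i \in B_A} \m(\pi(s) = i), \qquad \m\left(\mathds{1}_A\left(\tfrac{\pi(t)}{n}\right)\right) = \sum_{i \in B_A} \m(\pi(t) = i).
\]
For brevity write $P_s$ and $P_t$ for these two probabilities. Lemma \ref{L14} gives, for every $i \in [n]$,
\[
\min(q^d, q^{-d})\,\m(\pi(t) = i) \le \m(\pi(s) = i) \le \max(q^d, q^{-d})\,\m(\pi(t) = i),
\]
where $d = |s-t|$. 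Summing these inequalities over $i \in B_A$ yields
\[
\min(q^d, q^{-d})\, P_t \le P_s \le \max(q^d, q^{-d})\, P_t.
\]

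Subtracting $P_t$ from all three sides gives
\[
\bigl(\min(q^d, q^{-d}) - 1\bigr) P_t \le P_s - P_t \le \bigl(\max(q^d, q^{-d}) - 1\bigr) P_t,
\]
so $|P_s - P_t| \le \max\bigl(|1 - q^d|, |1 - q^{-d}|\bigr) \cdot P_t = M \cdot P_t$. Since $0 \le P_t \le 1$, this proves the claimed bound $|P_s - P_t| \le M$.

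There is essentially no obstacle here; the only minor point to verify is that the constant $M$ in the statement matches the bound coming from the two-sided ratio inequality. For any $q>0$, one of $q^d$ and $q^{-d}$ is at most $1$ and the other is at least $1$, so $|1 - \min(q^d, q^{-d})| = 1 - \min(q^d, q^{-d})$ and $|1 - \max(q^d, q^{-d})| = \max(q^d, q^{-d}) - 1$, and both are dominated by $M = \max(|1-q^d|, |1-q^{-d}|)$. The case $q = 1$ (hence $M = 0$) is trivial, and the case $d = 0$ is vacuous since then $P_s = P_t$.
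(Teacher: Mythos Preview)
Your proof is correct and follows essentially the same approach as the paper: apply Lemma \ref{L14} pointwise, sum over $\{i : i/n \in A\}$, and use that the resulting probability is at most $1$. The only cosmetic difference is that the paper assumes $0<q<1$ without loss of generality, whereas you carry the $\min/\max$ notation throughout.
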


\begin{lemma}\label{L17}
For any $A, B \subset [0, 1]$ and any $s, t, w \in [n]$ such that either $w < \min{(s, t)}$ or $w > \max{(s, t)}$,
\[
\textstyle\left|\m \left( \mathds{1}_A \big(\frac{\pi(s)}{n} \big)\mathds{1}_B \big(\frac{\pi(w)}{n} \big) \right) -
     \m \left( \mathds{1}_A \big(\frac{\pi(t)}{n} \big) \mathds{1}_B \big(\frac{\pi(w)}{n} \big) \right) \right| \le M,
\]
where $M = \max{(|1-q^d|, |1 - q^{-d}|)}$ and $d = |s-t|$.
\end{lemma}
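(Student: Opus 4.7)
The plan is to mimic the derivation of Lemma \ref{L16} from Lemma \ref{L14}, but now using the joint version Lemma \ref{L15} in place of the marginal version. First I would expand both expectations as sums of joint probabilities. Writing
\[
X \coloneqq \m\!\left(\mathds{1}_A\!\left(\tfrac{\pi(s)}{n}\right)\mathds{1}_B\!\left(\tfrac{\pi(w)}{n}\right)\right)
= \sum_{\substack{i/n\in A\\ j/n\in B}} \m\!\left(\pi(s)=i,\,\pi(w)=j\right),
\]
and defining $Y$ analogously with $t$ in place of $s$, the goal reduces to comparing the corresponding summands term-by-term.

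Next I would apply Lemma \ref{L15} (whose hypothesis $w<\min(s,t)$ or $w>\max(s,t)$ is exactly the assumption of the present lemma) to each pair $(i,j)$, yielding
\[
\min(q^d,q^{-d})\,\m\!\left(\pi(t)=i,\,\pi(w)=j\right)
\le \m\!\left(\pi(s)=i,\,\pi(w)=j\right)
\le \max(q^d,q^{-d})\,\m\!\left(\pi(t)=i,\,\pi(w)=j\right).
\]
Summing over the admissible pairs $(i,j)$ with $i/n\in A$ and $j/n\in B$ preserves these two-sided bounds, so $\min(q^d,q^{-d})\,Y \le X \le \max(q^d,q^{-d})\,Y$. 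From this one gets
\[
-\bigl(1-\min(q^d,q^{-d})\bigr)\,Y \;\le\; X-Y \;\le\; \bigl(\max(q^d,q^{-d})-1\bigr)\,Y,
\]
and since $Y$ is the expectation of an indicator and hence satisfies $0\le Y\le 1$, the claim $|X-Y|\le \max(|1-q^d|,|1-q^{-d}|)=M$ follows immediately.

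There is no real obstacle: the only substantive input is Lemma \ref{L15}, whose proof the excerpt says is parallel to that of Lemma \ref{L2} (a swap bijection on $S_n$ together with the observation that transposing adjacent values changes the inversion number by exactly one, applied on the sub-event $\{\pi(w)=j\}$ which is preserved by the transposition whenever $j\notin\{\text{the swapped values}\}$, and then summed carefully). Once Lemma \ref{L15} is in hand, the passage to expectations of products of indicators is purely a matter of term-by-term comparison plus the trivial bound $Y\le 1$, exactly as in the derivation of Lemma \ref{L16} from Lemma \ref{L14}.
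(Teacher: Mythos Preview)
Your proposal is correct and follows exactly the route the paper indicates: the paper states that Lemma \ref{L17} follows from Lemma \ref{L15} by the same argument used to deduce Lemma \ref{L16} from Lemma \ref{L14}, and you have written out precisely that term-by-term comparison followed by the bound $Y\le 1$.
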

Here we only deduce Lemma \ref{L16} from Lemma \ref{L14}. Lemma \ref{L17} follows from Lemma \ref{L15} by the similar argument.
\begin{proof}[Proof of Lemma \ref{L16}]
Without loss of generality, assume $0 < q < 1$. By Lemma \ref{L14}, for any $i \in [n]$, we have
\[
q^d \le \frac{\m(\pi(s)=i)}{\m(\pi(t)=i)}\le q^{-d}.
\]
Hence
\[
\textstyle q^d \sum\limits_{\{i:\frac{i}{n} \in A\}} \m(\pi(t)=i) \le \sum\limits_{\{i:\frac{i}{n} \in A\}} \m(\pi(s)=i)\le q^{-d}\sum\limits_{\{i:\frac{i}{n} \in A\}} \m(\pi(t)=i).
\]
Thus
\[
\textstyle\m \left( \mathds{1}_A \big(\frac{\pi(t)}{n} \big) \right) \cdot q^d \le \m \left( \mathds{1}_A \big(\frac{\pi(s)}{n} \big) \right)
  \le \m \left( \mathds{1}_A \big(\frac{\pi(t)}{n} \big) \right) \cdot q^{-d}.
\]
Therefore
\begin{align*}
   &\textstyle\left|\m \left( \mathds{1}_A \big(\frac{\pi(s)}{n} \big) \right) -
     \m \left( \mathds{1}_A \big(\frac{\pi(t)}{n} \big) \right) \right|\\
 \le&\, \textstyle\m \left( \mathds{1}_A \big(\frac{\pi(t)}{n} \big) \right)\max{(q^{-d}-1, 1 - q^d)}\\
 \le&\, \textstyle\max{(q^{-d}-1, 1 - q^d)}.
\end{align*}
\end{proof}

%%%%%%%%%%%%%%%%%%%%%%%%%%%%%%%%%%% Proof of Lemma 13 %%%%%%%%%%%%%%%%%%%%%%%%%%%%%%%%%%%

\begin{proof}[Proof of Lemma \ref{L13}]
Let $m$ be a positive integer whose value is to be determined. Define the following $m+1$ sequences $\{ a^{(k)}_n \}$, $0 \le k \le m$, as follows,
\begin{equation}
a^{(k)}_n \coloneqq 
\begin{cases}
1, &\text{if $k = 0$;}\\
\left\lceil\frac{k n}{m}\right\rceil, &\text{if $1 \le k \le m$}.
\end{cases} 
\label{eq:L13a}
\end{equation}
Then, for any $0 \le k \le m$, we have $\lim_{n \to \infty}\frac{ a^{(k)}_n}{n} = \frac{k}{m}$.
Also, for any $0 \le k \le m-1$ and $n > m$ we have
\[
1 \le a^{(k+1)}_n - a^{(k)}_n \le \frac{n}{m} + 1.
\]
Then, for any $n > m$ and any $i, j \in [n]$ with $ i < j$, there exist unique $k$ and $l$ such that
\begin{equation}
 i \in \left[a^{(k)}_n, a^{(k+1)}_n \right), \quad \text{ and }\quad j \in \left(a^{(l-1)}_n, a^{(l)}_n \right]. \label{eq:L13b}
\end{equation}
Clearly, we have
\begin{equation}
 k  < l, \qquad  |i - a^{(k)}_n| \le \frac{n}{m} \quad \text{ and } \quad |j - a^{(l)}_n| \le \frac{n}{m}. \label{eq:L13c}
\end{equation}

Then, given $\epsilon > 0$, fix a sufficiently large $m$ in the first place such that,
\[
\left|e^{\frac{\beta}{m}} - 1 \right| < \frac{\epsilon}{12}, \qquad
\left|e^{-\frac{\beta}{m}} - 1 \right| < \frac{\epsilon}{12}.
\]
Next, since $\lim_{n \to \infty} q_n^n = e^{-\beta}$, there exists $N_1> 0$ such that for any $n > N_1$,
\[
\Big|e^{\frac{\beta}{m}} - q_{n}^{- \frac{n}{m}} \Big| < \frac{\epsilon}{12}, \qquad
\Big|e^{-\frac{\beta}{m}} - q_{n}^{\frac{n}{m}} \Big| < \frac{\epsilon}{12}.
\]
Then, by triangle inequality, for any $n > N_1$,
\begin{equation}
\max{\left(\left|1-q_n^{\frac{n}{m}}\right|, \left|1-q_n^{-\frac{n}{m}}\right|\right)} < \frac{\epsilon}{6}. \label{eq:L13cc}
\end{equation}
For the simplicity of notation, define
\begin{align*}
U &\coloneqq \textstyle\Big|\n \big(\mathds{1}_A\big(\frac{\pi(i)}{n}\big)\big) \n \big(\mathds{1}_B\big(\frac{\pi(j)}{n}\big)\big) -
     \n \big(\mathds{1}_A\big(\frac{\pi(a_n^{(k)})}{n}\big)\big) \n \big(\mathds{1}_B\big(\frac{\pi(j)}{n}\big)\big) \Big|,\\
V &\coloneqq \textstyle\Big|\n \big(\mathds{1}_A\big(\frac{\pi(i)}{n}\big)\mathds{1}_B\big(\frac{\pi(j)}{n}\big)\big) -
   \n \big(\mathds{1}_A\big(\frac{\pi(a_n^{(k)})}{n}\big)\mathds{1}_B\big(\frac{\pi(j)}{n}\big)\big) \Big|,\\
W &\coloneqq \textstyle \Big| \n \big(\mathds{1}_A\big(\frac{\pi(a_n^{(k)})}{n}\big)\mathds{1}_B\big(\frac{\pi(j)}{n}\big)\big) -
     \n \big(\mathds{1}_A\big(\frac{\pi(a_n^{(k)})}{n}\big)\big) \n \big(\mathds{1}_B\big(\frac{\pi(j)}{n}\big)\big) \Big|\\
  &\,= \textstyle\Big|\cov_n\Big(\mathds{1}_A\big(\frac{\pi(a_n^{(k)})}{n}\big), \mathds{1}_B\big(\frac{\pi(j)}{n}\big)\Big)\Big|.
\end{align*}
Then, by (\ref{eq:L13c}), (\ref{eq:L13cc}), Lemma \ref{L16} and Lemma \ref{L17}, for any $n > \max{(m, N_1)}$ and any $0 \le i < j \le n$ with
corresponding $k, l$ defined in (\ref{eq:L13b}), we have
\begin{align*}
  U =&\, \textstyle\n \big(\mathds{1}_B\big(\frac{\pi(j)}{n}\big)\big) \cdot \Big| \n \big(\mathds{1}_A\big(\frac{\pi(i)}{n}\big)\big) -
   \n \big(\mathds{1}_A\big(\frac{\pi(a_n^{(k)})}{n}\big)\big)\Big|\\
 \le&\,\textstyle\Big| \n \big(\mathds{1}_A\big(\frac{\pi(i)}{n}\big)\big) - \n \big(\mathds{1}_A\big(\frac{\pi(a_n^{(k)})}{n}\big)\big)\Big|\\
 \le&\, \textstyle\max{\left(\left|1-q_n^{\frac{n}{m}}\right|, \left|1-q_n^{-\frac{n}{m}}\right|\right)} < \textstyle\frac{\epsilon}{6},\\
  \textstyle V \le&\, \textstyle\max{\left(\left|1-q_n^{\frac{n}{m}}\right|, \left|1-q_n^{-\frac{n}{m}}\right|\right)} < \frac{\epsilon}{6}.
\end{align*}
Whence, again, by triangle inequality, for any $n > \max{(m, N_1)}$,
\begin{align}
  &\textstyle\Big|\cov_n\Big(\mathds{1}_A\big(\frac{\pi(i)}{n}\big), \mathds{1}_B\big(\frac{\pi(j)}{n}\big)\Big)\Big| \label{eq:L13d}\\
  =\,&\textstyle\Big| \n \big(\mathds{1}_A\big(\frac{\pi(i)}{n}\big)\mathds{1}_B\big(\frac{\pi(j)}{n}\big)\big) -
     \n \big(\mathds{1}_A\big(\frac{\pi(i)}{n}\big)\big) \n \big(\mathds{1}_B\big(\frac{\pi(j)}{n}\big)\big)\Big| \nonumber\\
  <&\ \textstyle U + V + W \nonumber\\
  <&\ \textstyle \Big|\cov_n\Big(\mathds{1}_A\big(\frac{\pi(a_n^{(k)})}{n}\big), \mathds{1}_B\big(\frac{\pi(j)}{n}\big)\Big)\Big| + \frac{\epsilon}{3}.  \nonumber
\end{align}
By the same argument, it follows that for any $n > \max{(m, N_1)}$,
\begin{align}
  &\textstyle\Big|\cov_n\Big(\mathds{1}_A\big(\frac{\pi(a_n^{(k)})}{n}\big), \mathds{1}_B\big(\frac{\pi(j)}{n}\big)\Big)\Big| \label{eq:L13e}\\
  <&\textstyle\Big| \cov_n \Big(\mathds{1}_A\big(\frac{\pi(a_n^{(k)})}{n}\big), \mathds{1}_B\big(\frac{\pi(a_n^{(l)})}{n}\big)\Big) \Big| +\frac{\epsilon}{3}. \nonumber
\end{align}
Combining (\ref{eq:L13d}) and (\ref{eq:L13e}), for any $n > \max{(m, N_1)}$ and any $0 \le i < j \le n$ with
corresponding $k, l$ defined in (\ref{eq:L13b}), we have
\begin{align*}
  &\textstyle\Big|\cov_n\Big(\mathds{1}_A\big(\frac{\pi(i)}{n}\big), \mathds{1}_B\big(\frac{\pi(j)}{n}\big)\Big)\Big|\\
  <\,&\textstyle\Big| \cov_n \Big(\mathds{1}_A\big(\frac{\pi(a_n^{(k)})}{n}\big), \mathds{1}_B\big(\frac{\pi(a_n^{(l)})}{n}\big)\Big) \Big| +\frac{2\epsilon}{3}.
\end{align*}
Moreover, since $m$ is fixed, by Lemma \ref{L12}, there exists $N_2 >0$ such that, for any $n > N_2$ and any $0 \le k < l \le m$, we have
\[
\textstyle\Big| \cov_n \Big(\mathds{1}_A\big(\frac{\pi(a_n^{(k)})}{n}\big), \mathds{1}_B\big(\frac{\pi(a_n^{(l)})}{n}\big)\Big) \Big| < \frac{\epsilon}{3}.
\]
Thus, for $n > \max{(m, N_1, N_2)}$ and any $0 \le i < j \le n$,
\[
\textstyle\Big| \n \big(\mathds{1}_A\big(\frac{\pi(i)}{n}\big)\mathds{1}_B\big(\frac{\pi(j)}{n}\big)\big) -
     \n \big(\mathds{1}_A\big(\frac{\pi(i)}{n}\big)\big) \n \big(\mathds{1}_B\big(\frac{\pi(j)}{n}\big)\big)\Big|
     < \epsilon.
\]
\end{proof}

\begin{proof}[Proof of Lemma \ref{L18}]
The proof of Lemma \ref{L18} is similar to the proof of Lemma \ref{L13}. 
Firstly, since $u(x, y, \beta)$ is uniformly continuous on $[0, 1] \times [0, 1]$, given $\epsilon > 0$, there exists $m_1 > 0$ such that
\[
\sup_{ \substack{|s - t|< \frac{1}{m_1}\\
                 s, t, y\in [0, 1]}}
                 |u(s, y, \beta) - u(t, y, \beta)| < \frac{\epsilon}{6}.
\]
Hence, for any $|s - t|< \frac{1}{m_1}$ with $s, t\in [0, 1]$, we have
\begin{align}
    &\left|\int_{y_1}^{y_2} u(s, y, \beta)\, dy - \int_{y_1}^{y_2} u(t, y, \beta)\, dy \right|  \label{eq:L18a}\\
 \le&\int_{y_1}^{y_2}|u(s, y, \beta) - u(t, y, \beta)|\, dy  \nonumber\\
 \le&\int_{0}^{1}|u(s, y, \beta) - u(t, y, \beta)|\, dy  \nonumber\\
  <&\, \frac{\epsilon}{6}. \nonumber
\end{align}
Then, choose an $m > 2 m_1$ such that
\[
\left|e^{\frac{\beta}{m}} - 1 \right| < \frac{\epsilon}{12}, \qquad
\left|e^{-\frac{\beta}{m}} - 1 \right| < \frac{\epsilon}{12}.
\]
Next, since $\lim_{n \to \infty} q_n^n = e^{-\beta}$, there exists $N_1> 0$ such that for any $n > N_1$,
\[
\Big|e^{\frac{\beta}{m}} - q_{n}^{- \frac{n}{m}} \Big| < \frac{\epsilon}{12}, \qquad
\Big|e^{-\frac{\beta}{m}} - q_{n}^{\frac{n}{m}} \Big| < \frac{\epsilon}{12}.
\]
By triangle inequality, for any $n > N_1$,
\[
\max{\left(\left|1-q_n^{\frac{n}{m}}\right|, \left|1-q_n^{-\frac{n}{m}}\right|\right)} < \frac{\epsilon}{6}.
\]
Next, define the $m+1$ sequences $\{ a^{(k)}_n \}$, $0 \le k \le m$, as in (\ref{eq:L13a}). By (\ref{eq:L13c}) and Lemma \ref{L16}, for any $n > \max{(m, N_1)}$ and any $i \in [n]$ with
corresponding $k$ defined in (\ref{eq:L13b}), we have
\begin{align}
   &\textstyle\Big| \n \big(\mathds{1}_A\big(\frac{\pi(i)}{n}\big)\big) - \n \big(\mathds{1}_A\big(\frac{\pi(a_n^{(k)})}{n}\big)\big)\Big| \label{eq:L18b}\\
 \le&\, \textstyle\max{\left(\left|1-q_n^{\frac{n}{m}}\right|, \left|1-q_n^{-\frac{n}{m}}\right|\right)}  \nonumber\\
 <&\, \frac{\epsilon}{6}. \nonumber
\end{align}
Secondly, by the definition of $a^{(k)}_n$ in (\ref{eq:L13a}), it is easily seen that
\[
\frac{k n}{m} \le a^{(k)}_n \le \frac{k n}{m} +1.
\]
Thus, for any $n > m$ and any $i \in [n]$ with corresponding $k$ defined in (\ref{eq:L13b}), we have
\begin{align*}
  &\frac{k n}{m} \le a^{(k)}_n \le i < a^{(k+1)}_n \le \frac{(k+1) n}{m} + 1\\
  \Rightarrow\ & \frac{k}{m} \le \frac{i}{n} \le \frac{k+1}{m} + \frac{1}{n}\\
  \Rightarrow\ & \left| \frac{i}{n} - \frac{k}{m} \right| \le \frac{1}{m} + \frac{1}{n} < \frac{2}{m} < \frac{1}{m_1}.
\end{align*}
Hence, by (\ref{eq:L18a}), for any $n > m$ and any $i \in [n]$ with corresponding $k$ defined in (\ref{eq:L13b}), we have
\begin{equation}
\Big|\int_{y_1}^{y_2} u\Big(\frac{i}{n}, y, \beta\Big)\, dy - \int_{y_1}^{y_2} u\Big(\frac{k}{m}, y, \beta\Big)\, dy \,\Big|
 < \frac{\epsilon}{6}. \label{eq:L18c}
\end{equation}
Thirdly, since $\lim_{n \to \infty} \frac{a_n^{(k)}}{n} = \frac{k}{m}$ for any $0 \le k \le m$, by Lemma \ref{M1}, there exists $N_1 > 0$
such that, for any $n > N_1$ and any $0 \le k \le m$,
\begin{equation}
\bigg|\n \bigg(\mathds{1}_A\bigg(\frac{\pi(a_n^{(k)})}{n}\bigg)\bigg) - \int_{y_1}^{y_2} u\Big(\frac{k}{m}, y, \beta\Big)\, dy \,\bigg| < \frac{\epsilon}{3}. \label{eq:L18e}
\end{equation}
Therefore, for any $n > \max{(m, N_1, N_2)}$ and any $i \in [n]$ with corresponding $k$ defined in (\ref{eq:L13b}), we have
\begin{align}
   &\textstyle\Big|\n \big(\mathds{1}_A\big(\frac{\pi(i)}{n}\big)\big) - \int_{y_1}^{y_2} u\big(\frac{i}{n}, y, \beta\big)\, dy\, \Big| \nonumber\\
 \le&\, \textstyle\Big|\n \big(\mathds{1}_A\big(\frac{\pi(i)}{n}\big)\big) - \n \big(\mathds{1}_A\big(\frac{\pi(a_n^{(k)})}{n}\big)\big) \Big| \nonumber\\
  &\qquad\qquad\qquad\qquad \textstyle + \Big|\n \big(\mathds{1}_A\big(\frac{\pi(a_n^{(k)})}{n}\big)\big) - \int_{y_1}^{y_2} u\big(\frac{k}{m}, y, \beta\big)\, dy \,\Big| \nonumber\\
  &\qquad\qquad\qquad\qquad \textstyle + \Big|\int_{y_1}^{y_2} u\big(\frac{k}{m}, y, \beta\big)\, dy - \int_{y_1}^{y_2} u\big(\frac{i}{n}, y, \beta\big)\, dy \,\Big|  \nonumber\\
  <&\, \frac{\epsilon}{3} + \frac{\epsilon}{6} + \frac{\epsilon}{6} < \epsilon. \nonumber
\end{align}
The last inequality follows from (\ref{eq:L18b}), (\ref{eq:L18c}) and (\ref{eq:L18e}).
\end{proof}

\section{Proof of Theorem \ref{M0}}
In this section, we show Theorem \ref{M0} using Lemma \ref{L18} and Lemma \ref{L13}. In the proof we approximate the continuous function $f$ on $[0, 1]$ by a sequence of simple functions. The following elementary lemma will be used in the proof.

\begin{lemma}\label{LM0}
Given random variables $X, X', Y, Y'$ such that $|X - X'| < \epsilon$, $|Y - Y'| < \epsilon$ and $\max(|X|, |X'|, |Y|, |Y'|) < M$, we have
\[
|\cov(X, Y) - \cov(X', Y')| < 4M \epsilon.
\]
\end{lemma}

\begin{proof}
Since $XY - X'Y' = X(Y - Y') + Y'(X - X')$, we have
\begin{equation}\label{eq:LM01}
|XY - X'Y'| \le |X||Y-Y'| + |Y'||X - X'| < 2M \epsilon.
\end{equation}
Similarly, since $\E(X) \E(Y) - \E(X')\E(Y') = \E(X)\E(Y - Y') + \E(Y')\E(X - X')$, we have
\begin{equation}\label{eq:LM02}
|\E(X) \E(Y) - \E(X')\E(Y') | < 2M \epsilon.
\end{equation}
Hence
\begin{align*}
|\cov(X, Y) - \cov(X', Y')| &\le \E|XY - X'Y'| + |\E(X) \E(Y) - \E(X')\E(Y')| \\
                            &< 4M \epsilon.
\end{align*}
\end{proof}

\begin{proof}[Proof of Theorem \ref{M0}]
Given continuous function $f : [0, 1] \longrightarrow \mathbb{R}$, define a sequence of simple functions $\{g_m\}_{m \ge 1}$ as follows,
\[
\textstyle g_m(x) \coloneqq \sum_{k = 1}^{m} f\left(\frac{k}{m}\right)\mathds{1}_{A_k}(x),  \quad\text{ where } A_k \coloneqq \left(\frac{k-1}{m}, \frac{k}{m}\right],
\]
and $g_m(0) \coloneqq f(0)$. Since $f$ is continuous on a compact interval, it is uniformly continuous on $[0, 1]$ and there exists $M > 0$ such that $|f(x)| < M$. Hence, for any $\epsilon > 0$, there exists an $N > 0$ such that for any $m > N$ we have
\[
|f(x) - g_m(x)| < \epsilon, \quad \forall x \in [0, 1].
\]
Hence for any $m > N$ and any $i \in [n]$ we have
\begin{equation}\label{eq:M03}
\textstyle\left|\n \left( f\left(\frac{\pi(i)}{n}\right) \right) - \n \left( g_m\left(\frac{\pi(i)}{n}\right) \right)\right| < \epsilon,
\end{equation}
and
\begin{equation}\label{eq:M04}
\textstyle\left|\int_0^1 f(y)\cdot u\left(\frac{i}{n}, y, \beta \right)\, dy - \int_0^1 g_m(y)\cdot u\left(\frac{i}{n}, y, \beta \right)\, dy \right| < \epsilon.
\end{equation}
Moreover, we have
\begin{align*}
 &\textstyle\left|\n \left( g_m\left(\frac{\pi(i)}{n}\right) \right) - \int_0^1 g_m(y)\cdot u\left(\frac{i}{n}, y, \beta \right)\, dy \right|\\
\le\ &\sum_{k = 1}^{m}\textstyle|f(\frac{k}{m})|\left|\n \left( \mathds{1}_{A_k}\left(\frac{\pi(i)}{n}\right) \right) - \int_0^1 \mathds{1}_{A_k}(y)\cdot u\left(\frac{i}{n}, y, \beta \right)\, dy \right|\\
\le\ & \sum_{k = 1}^{m}\textstyle M\left|\n \left( \mathds{1}_{A_k}\left(\frac{\pi(i)}{n}\right) \right) - \int_{A_k} u\left(\frac{i}{n}, y, \beta \right)\, dy \right|,
\end{align*}
Hence by triangle inequality, the first claim (\ref{eq:M01}) follows from Lemma \ref{L18}, (\ref{eq:M03}) and (\ref{eq:M04}). To prove the second claim (\ref{eq:M02}), we use the same technique by approximating $f$ by simple functions $g_m$. Note that by Lemma \ref{LM0}, for any $m > N$ and any $1 \le i < j \le n$, we have
\begin{equation}\label{eq:M05}
\textstyle\left| \cov_n\Big(f\big(\frac{\pi(i)}{n}\big), f\big(\frac{\pi(j)}{n}\big)\Big)  -  
\cov_n\Big(g_m\big(\frac{\pi(i)}{n}\big), g_m\big(\frac{\pi(j)}{n}\big)\Big)\right| < 4M \epsilon.
\end{equation}
Note that
\begin{align*}
& \textstyle \left|\cov_n\Big(g_m\big(\frac{\pi(i)}{n}\big), g_m\big(\frac{\pi(j)}{n}\big)\Big)\right| \\
=\ & \left|\sum_{k = 1}^m \sum_{l = 1}^m \textstyle f\left(\frac{k}{m}\right)f\left(\frac{l}{m}\right) \cov_n\Big(\mathds{1}_{A_k}\big(\frac{\pi(i)}{n}\big), \mathds{1}_{A_l}\big(\frac{\pi(j)}{n}\big)\Big)\right|\\
\le\ & M^2 \sum_{k = 1}^m \sum_{l = 1}^m \textstyle \left| \cov_n\Big(\mathds{1}_{A_k}\big(\frac{\pi(i)}{n}\big), \mathds{1}_{A_l}\big(\frac{\pi(j)}{n}\big)\Big)\right|.
\end{align*}
(\ref{eq:M02}) follows from Lemma \ref{L13} and (\ref{eq:M05}).
\end{proof}

%%%%%%%%%%%%%%%%%%%%%%%%%%%%%%%%%%%%%%%%%%%%%%  Section 2 %%%%%%%%%%%%%%%%%%%%%%%%%%%%%%%%%%%%%%%%%%%%%%%%%%%%

\section{The Convergence of the Empirical Measure}
Recall that, under the conditions in Theorem \ref{M2}, we need to show the convergence of the empirical measure induced by $\{(\frac{i}{n}, \frac{\tau\ccirc\pi(i)}{n})\}_{i \in [n]}$.
Note that, by relabeling the indices, we have $\{(\frac{i}{n}, \frac{\tau\ccirc\pi(i)}{n})\}_{i \in [n]} = \{(\frac{\pi^{-1}(i)}{n}, \frac{\tau(i)}{n})\}_{i \in [n]}$.  Since $\pi$ and $\tau$ are independent, for a given $i$, the $x$ coordinate and $y$ coordinate of $\big(\frac{\pi^{-1}(i)}{n}, \frac{\tau(i)}{n}\big)$ are independent. We will exploit this property to establish the first and second moment estimates of the number of these points which fall inside a given rectangle. Recall that, in Section \ref{S2}, for any $\pi \in S_n$, we define $L_{\pi}$ as the empirical probability measure of $\{(\frac{i}{n}, \frac{\pi(i)}{n})\}_{i \in [n]}$, i.e.,
\[
L_{\pi}(R) \coloneqq \frac{1}{n}\sum_{i = 1}^{n} \mathds{1}_R\Big( \frac{i}{n}, \frac{\pi(i)}{n} \Big), \qquad \forall R \in  \mathcal{B}_{[0, 1]\times[0,1]}.
\]
Similarly, we now define $L_{\pi, \tau}$ to be the empirical probability measure of $\{(\frac{\pi(i)}{n}, \frac{\tau(i)}{n})\}_{i \in [n]}$. That is
\[
L_{\pi, \tau}(R) \coloneqq \frac{1}{n}\sum_{i = 1}^{n} \mathds{1}_R\Big(\frac{\pi(i)}{n}, \frac{\tau(i)}{n}\Big), \qquad \forall R \in  \mathcal{B}_{[0, 1]\times[0,1]}.
\]

%%%%%%%%%%%%%%%%%%%%%%%%%%%%%%%%%%%%%%%%%% Lemma L100 %%%%%%%%%%%%%%%%%%%%%%%%%%%%%%%%%%

Lemma \ref{L18} and Lemma \ref{L13} imply the following weak convergence for $L_{\pi,\tau}$.

\begin{lemma}\label{L100}
Under the same conditions as Theorem \ref{M2}, for any $R = (x_1, x_2]\times (y_1, y_2] \subset [0, 1]\times [0, 1]$, we have
\begin{equation}
\lim_{n \to \infty} \p_n \left(\left|\,L_{\pi, \tau}(R) - \int_{R} \rho(x, y)\, dxdy\, \right| > \epsilon \right) = 0   \label{eq:L100a}
\end{equation}
for any $\epsilon > 0$. Here $\rho(x, y)$ is the density function defined in Theorem \ref{M2}.
\end{lemma}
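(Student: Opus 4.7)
The plan is a standard first- and second-moment argument combined with Chebyshev's inequality, where the independence of $\pi$ and $\tau$ reduces everything to the one-permutation estimates in Lemmas \ref{L18} and \ref{L13}. For each $i \in [n]$, write
\[
X_i \coloneqq \mathds{1}_R\!\left(\tfrac{\pi(i)}{n}, \tfrac{\tau(i)}{n}\right) = Y_i Z_i,
\]
with $Y_i \coloneqq \mathds{1}_{(x_1, x_2]}(\pi(i)/n)$ depending only on $\pi$ and $Z_i \coloneqq \mathds{1}_{(y_1, y_2]}(\tau(i)/n)$ only on $\tau$. Since $\p_n = \n \otimes \mu_{n, q'_n}$ is a product measure, $\p_n(X_i) = \n(Y_i)\,\mu_{n,q'_n}(Z_i)$ and $\p_n(X_i X_j) = \n(Y_i Y_j)\,\mu_{n,q'_n}(Z_i Z_j)$ for every $i, j$. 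The half-open vs.\ closed-interval distinction at the endpoints $x_1, y_1$ costs at most $O(1/n)$ by Lemma \ref{L3}, so Lemmas \ref{L18} and \ref{L13}, stated for closed rectangles, apply without modification.

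For the first moment, Lemma \ref{L18} gives, uniformly in $i \in [n]$, $\n(Y_i) = I_\beta(i/n) + o(1)$ and $\mu_{n, q'_n}(Z_i) = I_\gamma(i/n) + o(1)$, where $I_\beta(t) \coloneqq \int_{x_1}^{x_2} u(t, x, \beta)\,dx$ and $I_\gamma(t) \coloneqq \int_{y_1}^{y_2} u(t, y, \gamma)\,dy$ are both continuous in $t$. A Riemann sum argument therefore yields
\[
\p_n\!\bigl(L_{\pi, \tau}(R)\bigr) = \frac{1}{n}\sum_{i=1}^{n} I_\beta(i/n)\, I_\gamma(i/n) + o(1) \longrightarrow \int_{0}^{1} I_\beta(t)\, I_\gamma(t)\, dt.
\]
Applying Fubini and the symmetry $u(x, t, \beta) = u(t, x, \beta)$ observed in the proof of Lemma \ref{L9} turns the right-hand side into $\int_{x_1}^{x_2}\!\int_{y_1}^{y_2} \rho(x, y)\, dy\, dx$, matching the definition of $\rho$.

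For the variance, decompose $\var(L_{\pi, \tau}(R)) = n^{-2} \sum_{i, j} \cov(X_i, X_j)$ under $\p_n$. The diagonal contributes at most $1/n$. For $i \neq j$, the product structure gives
\[
\cov(X_i, X_j) = \cov(Y_i, Y_j)\,\mu_{n, q'_n}(Z_i Z_j) + \n(Y_i)\n(Y_j)\,\cov(Z_i, Z_j),
\]
and Lemma \ref{L13} applied separately to $\pi$ and to $\tau$ drives both single-coordinate covariances to $0$ uniformly over pairs with $i \neq j$; the remaining factors lie in $[0, 1]$. Hence $\var(L_{\pi, \tau}(R)) \to 0$, and Chebyshev's inequality closes out \eqref{eq:L100a}. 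The real work has been outsourced to Lemmas \ref{L18} and \ref{L13} (proved in Section \ref{section5}); within the present argument the only delicate step is the invocation of the symmetry of $u$ in its first two arguments, which is exactly what lets the separable product $I_\beta(t) I_\gamma(t)$ be identified with $\int_0^1 u(x, t, \beta)\,u(t, y, \gamma)\,dt = \rho(x, y)$ after Fubini.
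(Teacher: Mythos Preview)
Your proof is correct and follows essentially the same first/second-moment plus Chebyshev scheme as the paper, invoking Lemmas \ref{L18} and \ref{L13} in the same way; your covariance identity $\cov(X_i,X_j)=\cov(Y_i,Y_j)\,\E(Z_iZ_j)+\E(Y_i)\E(Y_j)\,\cov(Z_i,Z_j)$ is just a cleaner repackaging of the paper's $|a_1b_1-a_2b_2|\le|a_1-a_2|+|b_1-b_2|$ estimate. One small correction: the half-open versus closed reduction does not need Lemma \ref{L3}; since at most one index $i$ can satisfy $\pi(i)/n=x_1$ and at most one can satisfy $\tau(i)/n=y_1$, one has $|L_{\pi,\tau}(R)-L_{\pi,\tau}(\bar R)|\le 2/n$ deterministically for every $\pi,\tau$, which is exactly how the paper handles it.
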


\begin{proof}
Let $\bar{R} = [x_1, x_2]\times[y_1, y_2]$ be the closure of $R$. Since, for any vertical or horizontal line $l$ and any $\pi, \tau \in S_n$, we have $L_{\pi, \tau}(l) \le \frac{1}{n}$, it follows that
\[
\left|\,L_{\pi, \tau}(R) - L_{\pi, \tau}(\bar{R}) \, \right| \le \frac{2}{n}.
\]
Then, given $\epsilon > 0$, for any $n > \frac{4}{\epsilon}$, by triangle inequality and the fact that $\int_{R} \rho(x, y)\, dxdy = \int_{\bar{R}} \rho(x, y)\, dxdy$, we get
\begin{align*}
&\textstyle \left|\,L_{\pi, \tau}(R) - \int_{R} \rho(x, y)\, dxdy\, \right| > \epsilon\\
\Rightarrow \ &\textstyle\left|\,L_{\pi, \tau}(\bar{R}) - \int_{\bar{R}} \rho(x, y)\, dxdy\, \right| > \frac{\epsilon}{2}.
\end{align*}
Hence, it suffices to show (\ref{eq:L100a}) for $R = [x_1, x_2]\times[y_1, y_2]$. In the remainder of the proof, let $R \coloneqq [x_1, x_2]\times[y_1, y_2]$. We will show
\begin{align}
 &\lim_{n \to \infty}\E_n (L_{\pi, \tau}(R)) = \int_{R} \rho(x, y)\, dxdy,  \label{eq:L100b} \\
 &\lim_{n \to \infty}\var_n (L_{\pi, \tau}(R)) = 0.  \label{eq:L100c}
\end{align}
Then, (\ref{eq:L100a}) follows from (\ref{eq:L100b}) and (\ref{eq:L100c}) by Chebyshev's inequality and triangle inequality.

Let $A = [x_1, x_2]$ and $B = [y_1, y_2]$. Define
\begin{align*}
\delta_n^{(i)} &\coloneqq \textstyle\n \big(\mathds{1}_A\big(\frac{\pi(i)}{n}\big) \big) - \int_A u\big(x, \frac{i}{n}, \beta \big)\, dx, \\
\delta_n^{'(i)} &\coloneqq \textstyle\mu_{n, q'_n} \big(\mathds{1}_B\big(\frac{\tau(i)}{n}\big) \big) -
       \int_B u\big(\frac{i}{n}, y, \gamma \big)\, dy,
\end{align*}
\[
\delta_n \coloneqq \max_{i \in [n]}( |\delta_n^{(i)}|) \quad \text{and} \quad
\delta'_n \coloneqq \max_{i \in [n]}( |\delta_n^{'(i)}|).
\]
Then, by Lemma \ref{L18} and the fact that $u(x, y, \beta) = u(y, x, \beta)$, for any $\epsilon > 0$, there exists $N_1 > 0$ such that,
for any $n > N_1$,
\[
\delta_n < \frac{\epsilon}{3} \qquad \text{and} \qquad \delta'_n < \frac{\epsilon}{3}.
\]
Without loss of generality, assume $0 < \epsilon < 1$. Then, for any $n > N_1$ and any $i \in [n]$, we have
\begin{align}
   &\textstyle\Big| \n \big(\mathds{1}_A\big(\frac{\pi(i)}{n}\big) \big) \mu_{n, q'_n} \big(\mathds{1}_B\big(\frac{\tau(i)}{n}\big) \big) -
     \int_R u\big(x, \frac{i}{n}, \beta \big)u\big(\frac{i}{n}, y, \gamma \big)\, dxdy \Big| \label{eq:L100d}\\
  =\,&\textstyle\Big|\Big(\delta_n^{(i)} + \int_A u\big(x, \frac{i}{n}, \beta \big)\, dx \Big)
           \Big(\delta_n^{'(i)} + \int_B u\big(\frac{i}{n}, y, \gamma \big)\, dy \Big) \nonumber\\
    &\textstyle\qquad\qquad\qquad\qquad\qquad
    - \int_A u\big(x, \frac{i}{n}, \beta \big)\, dx \cdot \int_B u\big(\frac{i}{n}, y, \gamma \big)\, dy \Big| \nonumber\\
  \le\,&\textstyle \Big|\delta_n^{'(i)}\Big| \int_A u\big(x, \frac{i}{n}, \beta \big)\, dx +
         \Big|\delta_n^{(i)}\Big| \int_B u\big(\frac{i}{n}, y, \gamma \big)\, dy + \Big|\delta_n^{(i)}\delta_n^{'(i)}\Big| \nonumber\\
  <\,&\textstyle \frac{\epsilon}{3} + \frac{\epsilon}{3} + \frac{\epsilon}{3} = \epsilon. \nonumber
\end{align}
Here we use Lemma \ref{L9} in the last inequality. Hence, for any $n > N_1$,
\begin{align}
  &\textstyle\bigg|\, \E_n (L_{\pi, \tau}(R)) -
         \frac{1}{n}\sum_{i=1}^{n}\int_R u\big(x, \frac{i}{n}, \beta \big)u\big(\frac{i}{n}, y, \gamma \big)\, dxdy \, \bigg|  \label{eq:L100e}\\
  =\,&\textstyle\bigg|\, \frac{1}{n}\sum_{i=1}^{n} \E_n \Big(\mathds{1}_R\big(\frac{\pi(i)}{n}, \frac{\tau(i)}{n}\big)\Big) -
           \frac{1}{n}\sum_{i=1}^{n}\int_R u\big(x, \frac{i}{n}, \beta \big)u\big(\frac{i}{n}, y, \gamma \big)\, dxdy \, \bigg| \nonumber\\
  \le\,&\textstyle \frac{1}{n}\sum_{i=1}^{n} \Big| \, \E_n \Big(\mathds{1}_A\big(\frac{\pi(i)}{n}\big) \mathds{1}_B\big(\frac{\tau(i)}{n}\big)\Big) -
         \int_R u\big(x, \frac{i}{n}, \beta \big)u\big(\frac{i}{n}, y, \gamma \big)\, dxdy \, \Big| \nonumber\\
  =\,&\textstyle \frac{1}{n}\sum_{i=1}^{n} \Big| \n \big(\mathds{1}_A\big(\frac{\pi(i)}{n}\big) \big) \mu_{n, q'_n} \big(\mathds{1}_B\big(\frac{\tau(i)}{n}\big) \big) -
     \int_R u\big(x, \frac{i}{n}, \beta \big)u\big(\frac{i}{n}, y, \gamma \big)\, dxdy \Big| \nonumber\\
  <\ & \epsilon.   \nonumber
\end{align}
Here the last equality follows from the fact that $(\pi, \tau) \sim \n \times \mu_{n, q'_n}$ under $\p_n$, and the last inequality follows from (\ref{eq:L100d}).

Since $u(x, y, \beta)$ and $u(x, y, \gamma)$ are bounded on $[0, 1]\times[0, 1]$, by the definition of Riemann integral and the dominated convergence theorem, we have
\begin{align}
  &\textstyle\lim\limits_{n \to \infty}\frac{1}{n}\sum_{i=1}^{n}\int_R u\big(x, \frac{i}{n}, \beta \big)u\big(\frac{i}{n}, y, \gamma \big)\, dxdy   \label{eq:L100f}\\
  =\,&\textstyle \int_R \Big(\lim\limits_{n \to \infty}\frac{1}{n}\sum_{i=1}^{n}u\big(x, \frac{i}{n}, \beta \big)u\big(\frac{i}{n}, y, \gamma \big)\Big)dxdy \nonumber\\
  =\,&\textstyle \int_R \Big(\int_0^1 u(x, t, \beta) u(t, y, \gamma) \,dt\Big)\,dxdy \nonumber\\
  =\,&\textstyle \int_R \rho(x, y)\, dxdy. \nonumber
\end{align}
Hence, (\ref{eq:L100b}) follows from (\ref{eq:L100e}) and (\ref{eq:L100f}).

To show (\ref{eq:L100c}), similarly, by Lemma \ref{L13}, for any $\epsilon > 0$, there exists $N_2 > 0$ such that, for any $n > N_2$,
\[
\textstyle\max\limits_{\substack{ i \neq j\\
                                   i, j \in [n]}}
    \Big| \n \big(\mathds{1}_A\big(\frac{\pi(i)}{n}\big)\mathds{1}_A\big(\frac{\pi(j)}{n}\big)\big) -
     \n \big(\mathds{1}_A\big(\frac{\pi(i)}{n}\big)\big) \n \big(\mathds{1}_A\big(\frac{\pi(j)}{n}\big)\big) \Big| < \frac{\epsilon}{4},
\]
\[
\textstyle\max\limits_{\substack{ i \neq j\\
                                   i, j \in [n]}}
    \Big| \mu_{n, q'_n} \big(\mathds{1}_B\big(\frac{\tau(i)}{n}\big)\mathds{1}_B\big(\frac{\tau(j)}{n}\big)\big) -
     \mu_{n, q'_n} \big(\mathds{1}_B\big(\frac{\tau(i)}{n}\big)\big) \mu_{n, q'_n} \big(\mathds{1}_B\big(\frac{\tau(j)}{n}\big)\big) \Big|
     < \frac{\epsilon}{4}.
\]
Without loss of generality, assume $0 < \epsilon < 1$. Then, similar to (\ref{eq:L100d}), for any $n > N_2$ and any $ 1 \le i < j \le n$,
\begin{align}
  &\textstyle\Big|\cov_n\left(\mathds{1}_A\big(\frac{\pi(i)}{n}\big)\mathds{1}_B\big(\frac{\tau(i)}{n}\big),
    \mathds{1}_A\big(\frac{\pi(j)}{n}\big)\mathds{1}_B\big(\frac{\tau(j)}{n}\big) \right)\Big| \label{eq:L100g}\\
  =\,&\textstyle\Big|\E_n\left(\mathds{1}_A\big(\frac{\pi(i)}{n}\big)\mathds{1}_B\big(\frac{\tau(i)}{n}\big)
             \mathds{1}_A\big(\frac{\pi(j)}{n}\big)\mathds{1}_B\big(\frac{\tau(j)}{n}\big) \right) \nonumber\\
           &\textstyle\qquad\qquad\qquad   -
             \E_n\left(\mathds{1}_A\big(\frac{\pi(i)}{n}\big)\mathds{1}_B\big(\frac{\tau(i)}{n}\big)\right)
             \E_n\left(\mathds{1}_A\big(\frac{\pi(j)}{n}\big)\mathds{1}_B\big(\frac{\tau(j)}{n}\big)\right) \Big| \nonumber\\
  =\,&\textstyle\Big|\n \big(\mathds{1}_A\big(\frac{\pi(i)}{n}\big)\mathds{1}_A\big(\frac{\pi(j)}{n}\big)\big)
\mu_{n, q'_n} \big(\mathds{1}_B\big(\frac{\tau(i)}{n}\big)\mathds{1}_B\big(\frac{\tau(j)}{n}\big)\big) \nonumber\\
           &\textstyle\quad - \n \big(\mathds{1}_A\big(\frac{\pi(i)}{n}\big)\big) \n \big(\mathds{1}_A\big(\frac{\pi(j)}{n}\big)\big)
           \mu_{n, q'_n} \big(\mathds{1}_B\big(\frac{\tau(i)}{n}\big)\big) \mu_{n, q'_n} \big(\mathds{1}_B\big(\frac{\tau(j)}{n}\big)\big) \Big| \nonumber\\
  <\,&\textstyle \frac{\epsilon}{2}.   \nonumber
\end{align}
Here the second equality follows from the fact that $(\pi, \tau) \sim \n \times \mu_{n, q'_n}$ under $\p_n$, and the last inequality follows by triangle inequality. Specifically, if $0 \le a_1, a_2, b_1, b_2 \le 1$, $|a_1 - a_2| < \frac{\epsilon}{4}$ and $|b_1 - b_2| < \frac{\epsilon}{4}$, then we have
\[
|a_1 b_1 - a_2 b_2| \le |a_1 b_1 - a_2 b_1| + |a_2 b_1 - a_2 b_2| \le |a_1 - a_2| + |b_1 - b_2|< \frac{\epsilon}{2}.
\]
Here we choose
\begin{align*}
 &\textstyle a_1 = \n \big(\mathds{1}_A\big(\frac{\pi(i)}{n}\big)\mathds{1}_A\big(\frac{\pi(j)}{n}\big)\big),
 &\textstyle a_2 = \n \big(\mathds{1}_A\big(\frac{\pi(i)}{n}\big)\big) \n \big(\mathds{1}_A\big(\frac{\pi(j)}{n}\big)\big),\\
 &\textstyle b_1 = \mu_{n, q'_n} \big(\mathds{1}_B\big(\frac{\tau(i)}{n}\big)\mathds{1}_B\big(\frac{\tau(j)}{n}\big)\big),
 &\textstyle b_2 = \mu_{n, q'_n} \big(\mathds{1}_B\big(\frac{\tau(i)}{n}\big)\big) \mu_{n, q'_n} \big(\mathds{1}_B\big(\frac{\tau(j)}{n}\big)\big).
\end{align*}

Thus, for any $n > \max{(N_2, \frac{1}{\epsilon})}$,
\begin{align*}
  &\var_n (L_{\pi, \tau}(R))\\
 =\,&\textstyle\var_n \Big( \frac{1}{n}\sum_{i = 1}^{n}\mathds{1}_A\big(\frac{\pi(i)}{n}\big)\mathds{1}_B\big(\frac{\tau(i)}{n}\big) \Big)\\
 =\,&\textstyle\frac{1}{n^2}\sum\limits_{i = 1}^{n}\var_n\big(\mathds{1}_A\big(\frac{\pi(i)}{n}\big)\mathds{1}_B\big(\frac{\tau(i)}{n}\big)\big)\\
    &\textstyle\qquad\qquad\qquad + \frac{1}{n^2}\sum\limits_{\substack{ i \neq j\\
                                   i, j \in [n]}}\cov_n\big(\mathds{1}_A\big(\frac{\pi(i)}{n}\big)\mathds{1}_B\big(\frac{\tau(i)}{n}\big),
    \mathds{1}_A\big(\frac{\pi(j)}{n}\big)\mathds{1}_B\big(\frac{\tau(j)}{n}\big) \big)\\
 <\,&\textstyle\frac{1}{n^2}\cdot \frac{n}{4} + \frac{n(n-1)}{n^2}\cdot\frac{\epsilon}{2}\\
 <\,&\epsilon.
\end{align*}
The first inequality follows by (\ref{eq:L100g}) and the fact that the variance of any indicator function is no greater than $\frac{1}{4}$.
\end{proof}

%%%%%%%%%%%%%%%%%%%%%%%%%%%%%%%%%%% Theorem M2 %%%%%%%%%%%%%%%%%%%%%%%%%%%%%%%%%%%%%%%%%%%%%%555

Now we are in the position to prove Theorem \ref{M2}

\begin{proof}[Proof of Theorem \ref{M2}]
First of all, we make the following claim:

\textbf{Claim}: To prove Theorem \ref{M2}, it suffices to show the case when $f(x, y) = \mathds{1}_R(x, y)$, for any
$R = (x_1, x_2] \times (y_1, y_2] \subset [0, 1]\times[0, 1]$. 
This is because for any continuous function $f(x, y)$ and any $\epsilon > 0$, we can find a simple function $s(x, y)$ on $(0, 1]\times(0, 1]$ such that
\[
|f(x, y) - s(x, y)| < \frac{\epsilon}{3} \qquad \forall (x, y) \in (0, 1]\times(0, 1],
\]
where $s(x, y)$ is of the form
\[
s(x, y) = \sum_{j = 1}^{m}a_j\mathds{1}_{R_j}(x, y),
\]
with $R_j = \left(x_1^{(j)}, x_2^{(j)}\right]\times\left(y_1^{(j)}, y_2^{(j)}\right] \subset (0, 1]\times(0, 1]$ and $\{R_j\}_{j = 1}^{m}$ is a partition of $(0, 1]\times(0, 1]$.
Hence, we have
\begin{equation}\label{eq:M2b}
\textstyle\left|\, \frac{1}{n}\sum_{i=1}^{n}f\big(\frac{i}{n}, \frac{\tau \ccirc \pi(i)}{n}\big) -
\frac{1}{n}\sum_{i=1}^{n}s\big(\frac{i}{n}, \frac{\tau \ccirc \pi(i)}{n}\big) \,\right| < \frac{\epsilon}{3},
\end{equation}
and
\begin{align}
&\textstyle\left|\int_0^1\int_0^1 s(x, y) \rho(x, y)\,dxdy - \int_0^1\int_0^1 f(x, y) \rho(x, y)\,dxdy\, \right| \label{eq:M2c}\\
\le &\textstyle \int_0^1\int_0^1 \left|s(x, y) - f(x, y)\right| \rho(x, y)\,dxdy \nonumber\\
< &\textstyle\frac{\epsilon}{3}. \nonumber
\end{align}
Here we use the fact that, by Lemma \ref{L9},
\[
\textstyle\int_0^1\int_0^1 \rho(x, y) \, dxdy = 1.
\]
Thus, by (\ref{eq:M2b}), (\ref{eq:M2c}) and triangle inequality, we have
\begin{align*}
   &\textstyle\left|\, \frac{1}{n}\sum_{i=1}^{n}f\big(\frac{i}{n}, \frac{\tau \ccirc \pi(i)}{n}\big) -
  \int_0^1\int_0^1 f(x, y) \rho(x, y)\,dxdy\, \right| > \epsilon\\
 \Rightarrow&\textstyle \left|\, \frac{1}{n}\sum_{i=1}^{n}s\big(\frac{i}{n}, \frac{\tau \ccirc \pi(i)}{n}\big) -
    \int_0^1\int_0^1 s(x, y) \rho(x, y)\,dxdy \,\right| > \frac{\epsilon}{3}.
\end{align*}
Hence, we get
\begin{align*}
 &\textstyle \p_n\Big(\Big|\, \frac{1}{n}\sum_{i=1}^{n}f\big(\frac{i}{n}, \frac{\tau \ccirc \pi(i)}{n}\big) -
  \int_0^1\int_0^1 f(x, y) \rho(x, y)\,dxdy\, \Big| > \epsilon \Big)\\
 \le\, &\textstyle\p_n\Big(\Big|\, \frac{1}{n}\sum_{i=1}^{n}s\big(\frac{i}{n}, \frac{\tau \ccirc \pi(i)}{n}\big) -
    \int_0^1\int_0^1 s(x, y) \rho(x, y)\,dxdy \, \Big| > \frac{\epsilon}{3} \Big)\\
 \le\, &\textstyle\sum\limits_{j = 1}^{m} \p_n\Big(\Big|\, \frac{1}{n}\sum_{i=1}^{n}\mathds{1}_{R_j}\big(\frac{i}{n}, \frac{\tau \ccirc \pi(i)}{n}\big) -
    \int_{R_j} \rho(x, y)\,dxdy \, \Big| > \frac{\epsilon}{3 m |a_j|} \Big).
\end{align*}
Here the last inequality follows by the union bound. Therefore, to prove Theorem \ref{M2}, it suffices to show the case when $f(x, y) = \mathds{1}_R(x, y)$, with $R = (x_1, x_2]\times(y_1, y_2]$. In other words, we need to show that, for any $\epsilon > 0$,
\begin{equation}
\lim_{n \to \infty} \p_n \Big(\Big|L_{\tau \ccirc \pi}(R) - \int_{R}\rho(x, y)\, dxdy \Big| > \epsilon \Big) = 0.   \label{eq:M2a}
\end{equation}
Here, as defined in (\ref{eq:em}),
\[
L_{\tau \ccirc \pi}(R) \coloneqq \frac{1}{n}\sum_{i = 1}^{n} \mathds{1}_R\left( \frac{i}{n}, \frac{\tau \ccirc \pi(i)}{n} \right).
\]
Then, for any $\pi, \tau \in S_n$, we have
\begin{align*}
\left\{\big( i, \tau \ccirc \pi(i) \big) : i \in [n] \right\} &= \left\{\big(\pi^{-1}(\pi(i)), \tau(\pi(i)) \big) : i \in [n] \right\} \\
                                                   &= \left\{\big( \pi^{-1}(i), \tau(i) \big) : i \in [n] \right\}.
\end{align*}
The last equality follows since $\{\pi(i)\}_{i \in [n]} = \{i\}_{i \in [n]}$.
Thus, it follows that
\[
L_{\tau \ccirc \pi}(R) = L_{\pi^{-1}, \tau}(R), \qquad \forall R \in  \mathcal{B}_{[0, 1]\times[0,1]}.
\]
If $(\pi, \tau) \sim \m \times \mu_{n, q'}$, by Proposition \ref{P3}, $(\pi^{-1}, \tau) \sim \m \times \mu_{n, q'}$.
Thus, given $(\pi, \tau) \sim \p_n$, we have
\[
L_{\tau \ccirc \pi}(R) = L_{\pi^{-1}, \tau}(R) \overset{d}{=} L_{\pi, \tau}(R).
\]
That is $L_{\tau \ccirc \pi}(R)$ and $L_{\pi, \tau}(R)$ have the same distribution when $(\pi, \tau) \sim \p_n$.\\
Therefore, (\ref{eq:M2a}) follows by Lemma \ref{L100}.

\end{proof}

\section{Discussion and future works}
\begin{itemize}
\item[1.]
One question which arises in the proof of Theorem \ref{M2} is whether it also holds in more general settings. Specifically, suppose $\{X_n\}_{n = 1}^{\infty}$, $\{Y_n\}_{n = 1}^{\infty}$ are two independent sequences of random variables  with $X_n, Y_n \in S_n$ such that the empirical measures $\{L_{X_n}\}$ and $\{L_{Y_n}\}$ as defined in (\ref{eq:em}) converge weakly to the probability measures on the unit square with density $\rho_1(x, y)$ and $\rho_2(x, y)$ respectively. In other words, for any continuous function $f: [0,1]\times[0,1] \rightarrow \mathbb{R}$ we have
\begin{align*}
&\lim_{n \to \infty}\textstyle\p\left(\left|\, \frac{1}{n}\sum_{i=1}^{n}f\left(\frac{i}{n}, \frac{X_n(i)}{n}\right) -
  \int_0^1\int_0^1 f(x, y) \rho_1(x, y)\,dxdy\, \right| > \epsilon \right) = 0,\\
&\lim_{n \to \infty}\textstyle\p\left(\left|\, \frac{1}{n}\sum_{i=1}^{n}f\left(\frac{i}{n}, \frac{Y_n(i)}{n}\right) -
  \int_0^1\int_0^1 f(x, y) \rho_2(x, y)\,dxdy\, \right| > \epsilon \right) = 0.
\end{align*}
Then, under what conditions will the empirical measure of the product permutation $\{L_{Y_n \ccirc X_n}\}$ converge weakly to some probability measure? The proof of Theorem \ref{M2} shows that conditions analogous to Lemma \ref{L18} and Lemma \ref{L13} are sufficient for the weak convergence of $\{L_{Y_n \ccirc X_n}\}$ to the probability with density $$
\rho(x, y) \coloneqq \textstyle\int_0^1 \rho_1(x, t)\cdot \rho_2(t, y)\,dt.
$$
The proofs of Lemma \ref{L18} and Lemma \ref{L13} rely on the special properties of the Mallows measure. Specifically, Lemma \ref{M1} and Proposition \ref{P2} play the key roles in our proofs of Lemma \ref{L18} and Lemma \ref{L13}. We think a weaker condition should suffice to guarantee the weak convergence of  $\{L_{Y_n \ccirc X_n}\}$. However, there exist some ad hoc examples which indicate the intricacy of the problem. For instance, by Theorem 1.6 and Theorem 1.8 in \cite{hoppen}, for any probability measure on $[0, 1]\times[0, 1]$ with density $u(x, y)$ such that both marginal distributions are uniform measure on $[0, 1]$, there exists a sequence of permutations $\{\pi_n\}$ such that $L_{\pi_n}$ converges in distribution to $u(x, y)$. Suppose $u(x, y)$ is symmetric about the diagonal, i.e., $u(x, y) = u(y, x)$. By the symmetry of the graph of $\pi_n$ and $\pi_n^{-1}$, it follows that $L_{\pi_n^{-1}}$ also converges in distribution to $u(x, y)$. Define two independent sequences of random variables $\{X_n\}$ and $\{Y_n\}$ such that both $X_n$ and $Y_n$ have equal probability of being $\pi_n$ or $\pi_n^{-1}$. Then it is easy to see that both $L_{X_n}$ and $L_{Y_n}$ converge in distribution to $u(x, y)$, whereas $L_{Y_n \ccirc X_n}$ does not converge since with probability one half $Y_n \ccirc X_n$ is identity in $S_n$.

\item[2.] Given a sequence of independent random variables $\{X_n\}_{n = 1}^{\infty}$ where $X_n \sim \n$ and $\lim_{n \to \infty} n(1-q_n) = \beta$. Let $u_{\beta}$ denote the probability measure on the unit square with density $u(x, y, \beta)$ as defined in (\ref{eq:t1}). It is unknown to us whether Theorem \ref{T1} still holds in the following stronger sense.
\begin{equation}
\p\left(L_{X_n} \overset{d}{\rightarrow} u_{\beta}\right) = 1.
\end{equation}
In other words, does the weak convergence of the random measure $L_{X_n}$ to $u_{\beta}$ hold almost surely? Let $u_n$ denote the empirical measure of $n$ i.i.d.\newline samples from the distribution $u_{\beta}$, it is known that (cf. Theorem 11.4.1 in \cite{Dudley})
\begin{equation}
\p\left(u_n \overset{d}{\longrightarrow} u_{\beta}\right) = 1.
\end{equation}

\item[3.]
Another problem is parameter inference. Given a collection of permutations, which are samples from the product of two independent Mallows permutations with parameters $q$ and $q'$ respectively, how can we estimate parameters $q$, $q'$ based on the samples? For samples from a Mallows distribution, Proposition 1.12  in \cite{Mukherjee} shows the existence of a consistent estimator of the underlying parameter $q$. It is possible that the explicit density of the limiting distribution of the product of two Mallows permutations could be useful in deriving estimators for the underlying parameters.
\end{itemize}

\begin{acknowledgements}
I am grateful to my supervisor Nayantara Bhatnagar for her helpful advice and suggestions during the research as well as her guidance in the completion of this paper. 
\end{acknowledgements}

% BibTeX users please use one of
%\bibliographystyle{spbasic}      % basic style, author-year citations
\bibliographystyle{spmpsci}      % mathematics and physical sciences
\bibliography{sampart}   % name your BibTeX data base

% Non-BibTeX users please use

\end{document}